\title[Quantum isometry groups of Cuntz--Krieger algebras]{Quantum isometry groups of log-Laplacians on Cuntz--Krieger algebras}
\author{Amaury Freslon}
\address{Universit{\'e} Paris-Saclay, CNRS, Laboratoire de Math{\'e}matiques d’Orsay, 91405 Orsay, France}
\email{amaury.freslon@universite-paris-saclay.fr}
\author{Dimitris M. Gerontogiannis}
\address{Institute of Mathematics of the Polish Academy of Sciences, ul. {\'S}niadeckich 8, 00–656, Warszawa, Poland}
\email{dgerontogiannis@impan.pl}
\author{Adam Skalski}
\address{Institute of Mathematics of the Polish Academy of Sciences, ul. {\'S}niadeckich 8, 00–656, Warszawa, Poland}
\email{a.skalski@impan.pl}
\subjclass{21N08}
\DeclareMathOperator{\Aut}{Aut}
\DeclareMathOperator{\Dom}{Dom}
\DeclareMathOperator{\Fix}{Fix}
\DeclareMathOperator{\Pol}{Pol}
\DeclareMathOperator{\QAut}{QAut}
\DeclareMathOperator{\id}{id}
\DeclareMathOperator{\supp}{supp}
\newcommand{\CA}{C(\QG_{A}^{\ell})}
\newcommand{\CAi}{C(\QG_{A}^{\infty})}
\newcommand{\CG}{C(\QG)}
\newcommand{\ot}{\otimes}
\newcommand{\F}{\mathbb{F}}
\newcommand{\GA}{\QG_{A}^{\ell}}
\newcommand{\GAi}{\QG_{A}^{\infty}}
\newcommand{\LC}{C_{c}^{\infty}(\Gamma_{A})}
\newcommand{\N}{\mathbb{N}}
\newcommand{\QG}{\mathbb{G}}
\newcommand{\QH}{\mathbb{H}}
\newcommand{\T}{\mathbb{T}}
\newcommand{\Z}{\mathbb{Z}}
\newcommand{\QGc}{\mathbb{G}_{\textup{clas}}}
\def\d{\operatorname{d}}
\def\df{\operatorname{d_f}}
\def\op{\operatorname{op}}
\def\spec{\operatorname{sp}}
\numberwithin{equation}{section}
\theoremstyle{theorem}
\newtheorem{thm}{Theorem}[section]
\newtheorem{cor}[thm]{Corollary}
\newtheorem{lemma}[thm]{Lemma}
\newtheorem{prop}[thm]{Proposition}
\newtheorem{thmx}{Theorem}
\theoremstyle{definition}
\newtheorem{definition}[thm]{Definition}
\theoremstyle{remark}
\newtheorem{remark}[thm]{Remark}
\newtheorem{quest}[thm]{Question}
\newtheorem*{Acknowledgements}{Acknowledgements}
\begin{document}

\begin{abstract}
We compute the quantum isometry groups of Cuntz--Krieger algebras endowed with the spectral triples coming from the Ahlfors regular structure of the underlying topological Markov chain. This allows us to exhibit a new family of compact quantum groups, mixing features from quantum automorphism groups of graphs and easy quantum groups. Contrary to the classical isometry groups, whose actions on the Cuntz--Krieger algebras are never ergodic, the quantum isometry group acts ergodically in the case of the Cuntz algebra.
This also leads to the construction of a (genuinely quantum) ergodic action of a compact matrix quantum group on the Cantor space.
\end{abstract}

\subjclass[2020]{Primary 46L65; Secondary 46L55, 46L87,   58B32, 58B34}

\keywords{quantum isometry groups; Cuntz--Krieger algebras; quantum automorphisms of graphs; quantum ergodic actions}

\maketitle

\section{Introduction}
A classical result in differential geometry asserts that the isometry group of a compact Riemannian manifold is a compact Lie group. This group controls rigidity, classification phenomena (\cite[Chapter II]{Kob}), and relates to spectral data of the Laplace--Beltrami operator. Over the last few decades, this picture has expanded in two directions at once. Riemannian manifolds are replaced by $C^*$-algebras equipped with a spectral triple, and isometry groups by compact quantum groups dictated by spectral data.  Spectral triples are the quintessence in Connes' noncommutative geometry \cite{connes94noncommutative}, equipping $C^*$-algebras with features pertinent to Riemannian geometry via Dirac operators.
Compact quantum groups, which have appeared first as deformations of classical Lie groups (\cite{woronowicz}), were later understood as  structures encoding \emph{quantum} symmetries of $C^*$-algebras, following Wang's seminal work \cite{wang98quantum} -- see \cite{Adam} and references therein.

It then becomes a meaningful task to investigate quantum symmetries of a $C^*$-algebra which preserve a given spectral triple, and in that sense are quantum isometries. This line of thought was initiated by Goswami in \cite{goswami09}, and continued by several other authors (see \cite{bhowmick16quantum}). Very roughly speaking, given a $C^*$-algebra $A$ equipped with a spectral triple $(\mathcal{A},H,D)$ where $\mathcal{A}\subset A$ is dense, one can consider the category of compact quantum groups that act on $A$ so that the action preserves the domain of $D$ and commutes with it. Such actions can be termed as $D$-\textit{isometric}, and the universal object of this category (if it exists) is called the \emph{quantum isometry group}.

Here we explore the quantum isometries of a Cuntz--Krieger algebra $O_A$ of a finite directed graph with adjacency matrix $A$. These are highly noncommutative $C^*$-algebras encoding aspects of the orbit space of topological Markov chains.
Despite their fundamental role since the 1980s, it was only recently that a canonical Dirac-type operator $D$ was associated with them  in \cite{gerontogiannis25heat}. The key tool was the so-called log-Laplacian introduced in \cite{gerontogiannis25dirichlet}, a non-local diffusion operator on the underlying Ahlfors regular shift space.  The latter operator is defined on any metric space equipped with a finite \emph{Ahlfors regular measure}. In the context of Cuntz--Krieger algebras, the construction in \cite{gerontogiannis25heat} proceeds by endowing the associated Deaconu--Renault groupoid $\Gamma_A$ with an Ahlfors regular structure and then, via additional intermediate steps, defining a Hamiltonian-type operator $D:=-\Delta + V$ which yields a spectral triple on $O_A$. The relevant  Hilbert space $L^2(\Gamma_A)$ can be identified with the GNS space $L^2(O_A,\tau)$, where $\tau$ is the unique KMS state of the $\mathbb T$-action on $O_A$. The classical isometry group associated with $D$, defined following the notion introduced in \cite{Park95Isometries}, was shown to be the compact Lie group $\mathbb{T}\wr \Aut(A)$, where $\Aut(A)$ is the automorphism group of $A$.

The computation of the \emph{quantum} isometry group in fact leads to a whole new family of compact quantum groups. Further, we should stress that so far the concept of quantum symmetry groups of graph C$^*$-algebras studied in the literature (\cite{JoardarMandal}, \cite{SchmidtWeber}) always assumed the action to be \emph{linear} in terms of the natural generating set. This turns out to be the case also here, but we derive it in a non-trivial way from the underlying analytic assumptions. 
The definition of the quantum groups mentioned above and the main result of our paper are summarised in the following theorem.

\begin{thmx} \label{thmA}
Let $\ell\in \mathbb N \cup \{\infty\}$ and consider the universal $C^*$-algebra $\CA$ generated by partial isometries $u=(u_{\alpha, \beta})_{1\leq \alpha,\beta\leq N}$, with range projections $p=(p_{\alpha, \beta})_{1\leq \alpha,\beta\leq N}$ and source projections $q=(q_{\alpha, \beta})_{1\leq \alpha,\beta\leq N}$, such that
	
	\begin{enumerate}[(I)]
		\item $p,q$ are magic unitaries preserving the right Perron--Frobenius eigenvector $\vec{u}$ of $A$; i.e. $p\vec{u}=\vec{u}1=q\vec{u}$;
		\item $A$ intertwines $p$ and $q$; i.e. $Ap=qA$;
		\item for $n \leq \ell$ and $\alpha, \beta\in V_A^{n}$, the element $u_{\alpha,\beta}:=u_{\alpha_1,\beta_1}\ldots u_{\alpha_n,\beta_n}$ is a partial isometry. 
	\end{enumerate}
	Then, $\CA$ equipped with a natural coproduct, defines a compact matrix quantum group of Kac type. The quantum group $\QG_A^\ell$ acts on $O_A$ and for $\ell=\infty$ the action is $D$-isometric. Finally, any $D$-isometric action factors through the $\GAi$-action on $O_A$. In other words, $\GAi$ is the quantum isometry group of the spectral triple on $O_A$ associated with $D$.
\end{thmx}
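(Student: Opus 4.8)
The plan is to start from an arbitrary $D$-isometric action $\gamma\colon O_A \to O_A \ot \CG$ of a compact quantum group $\QG$ and to produce a surjection $\CAi \to \CG$ of compact quantum groups intertwining the two actions; this is exactly the universal property identifying $\GAi$ as the quantum isometry group. The first step exploits commutation with $D$: since $\gamma$ preserves $\Dom(D)$ and commutes with $D=-\Delta+V$, its canonical unitary implementation on $L^2(O_A,\tau)$ preserves every (finite-dimensional) eigenspace of $D$. The whole problem thereby reduces to understanding how $\gamma$ acts on the low-lying eigenspaces, which I would read off from the explicit spectral description of $D$ on $L^2(\Gamma_A)\cong L^2(O_A,\tau)$ provided by the Ahlfors regular structure.

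The crucial — and I expect hardest — step is establishing \emph{linearity}. I would locate the eigenspace of $D$ containing the degree-one generators and argue that, owing to the interplay of the log-Laplacian spectrum with the potential $V$, this eigenspace is \emph{exactly} the linear span of $\{S_\alpha\}$, with no higher-degree or off-diagonal contamination. Invariance of this eigenspace then forces $\gamma(S_\beta)=\sum_\alpha S_\alpha \ot u_{\alpha,\beta}$ for a matrix $u=(u_{\alpha,\beta})$ over $\CG$. This is precisely the point the introduction flags as derived rather than assumed: the obstacle is ruling out that the eigenspace is strictly larger than the span of the generators, so that no nonlinear terms can occur. Here the fine structure of $D$ must be used, and not merely its coarse grading by the gauge action, since that grading alone would not separate the generators from other degree-one contributions.

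Once linearity is in hand, the relations (I)--(III) follow by feeding the defining relations of $O_A$ through the $*$-homomorphism $\gamma$. Writing $p_{\alpha,\beta}=u_{\alpha,\beta}u_{\alpha,\beta}^*$ and $q_{\alpha,\beta}=u_{\alpha,\beta}^*u_{\alpha,\beta}$, the Cuntz--Krieger relations $\sum_i S_iS_i^*=1$ and $S_i^*S_i=\sum_j A_{ij}S_jS_j^*$ force $p,q$ to be magic unitaries with $Ap=qA$, giving (II). For (I), I would use that a $D$-isometric action preserves the KMS state $\tau$, i.e. $(\tau\ot\id)\gamma=\tau(\cdot)1$; evaluating this on the range projections $S_\alpha S_\alpha^*$, whose $\tau$-values are the normalised Perron--Frobenius weights, produces $p\vec{u}=\vec{u}\,1=q\vec{u}$. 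Condition (III) is where $\ell=\infty$ is essential: applying $\gamma$ to an admissible word $S_{\beta_1}\cdots S_{\beta_n}$ and using multiplicativity gives $\gamma(S_{\beta_1}\cdots S_{\beta_n})=\sum S_{\alpha_1}\cdots S_{\alpha_n}\ot u_{\alpha_1,\beta_1}\cdots u_{\alpha_n,\beta_n}$, and matching this against the partial-isometry structure of admissible words of every length forces each product $u_{\alpha_1,\beta_1}\cdots u_{\alpha_n,\beta_n}$ to be a partial isometry.

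Finally, the elements $u_{\alpha,\beta}\in\CG$ satisfy exactly the relations (I)--(III) defining $\CAi$, so its universal property yields a unique unital $*$-homomorphism $\pi\colon\CAi\to\CG$ sending generators to the $u_{\alpha,\beta}$. Verifying that $\pi$ intertwines the coproducts (it suffices to check this on generators, where both sides are governed by the matrix comultiplication) shows that $\pi$ is a morphism of compact quantum groups, and by construction $(\id\ot\pi)\circ\gamma^{\infty}=\gamma$, where $\gamma^{\infty}$ is the universal $\GAi$-action. Hence every $D$-isometric action factors through the $\GAi$-action, which is the assertion that $\GAi$ is the quantum isometry group of the spectral triple on $O_A$.
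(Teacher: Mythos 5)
Your proposal covers only the universality half of the theorem --- the existence of the coproduct, of the action, and above all the proof that the $\GAi$-action is itself $D$-isometric (Propositions \ref{prop:Qgroup}, \ref{prop:actions} and \ref{prop:G_A_isom}, the last being a long spectral computation) are assumed rather than proved --- and even within that half two key steps fail. The first is your ``crucial'' linearity step, which rests on a false premise: there is no eigenspace of $D$ equal to the linear span of the generators. The vectors $\rho(S_\alpha)\xi$ lie in the $2$-eigenspace of $D$, which is
\begin{equation*}
E_2=\langle \rho(S_\alpha S_\beta S_\beta^*)\xi:\ \alpha,\beta\in V_A^1,\ \alpha.\beta\in I_A\rangle ,
\end{equation*}
of dimension equal to the number of edges of the graph, whereas $\rho(S_\alpha)\xi=\sum_\beta A_{\alpha,\beta}\,\rho(S_\alpha S_\beta S_\beta^*)\xi$ spans only an $N$-dimensional subspace; since $A$ is primitive and $N\geq 2$, some vertex has out-degree at least two, so the inclusion is strict. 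Moreover, every vector of $E_2$ already has gauge-degree one, so no combination of the spectrum of $D$ with the gauge grading can separate the generators from the rest of $E_2$: the ``contamination'' you hope to rule out is genuinely there. Eigenspace-invariance therefore only yields the edge-indexed expansion $\varphi(S_\alpha)=\sum_{\alpha'.\beta'} s^{\alpha}_{\alpha'.\beta'}\otimes S_{\alpha'}S_{\beta'}S_{\beta'}^*$, with coefficients a priori depending on $\beta'$, and the paper needs a further, separate argument (preservation of the closed span of the vectors $\rho(S_\mu S_\nu^*)\xi$ combined with degree-wise orthogonality considerations) to remove that dependence. Your proposal contains no replacement for this step.

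The second failure is your derivation of relation (III). It uses only that $\varphi$ is a linear action by a $*$-homomorphism and never the commutation with $D$; but the tautological action of $\QG_A^1$ on $O_A$ (Proposition \ref{prop:actions}) is also a linear $*$-homomorphic action, so if your argument were valid its coefficients would satisfy (III) for all word lengths and the canonical surjection $C(\QG_A^1)\to\CAi$ would be an isomorphism for every $A$ --- contradicting Corollary \ref{cor:diff_Gl}, which asserts that for $A=\mathbf{1}_N$ the quantum groups $\QG_{\mathbf{1}}^{\ell}$ are pairwise non-isomorphic. Concretely, what goes wrong is this: expanding $\varphi(S_\beta)\varphi(S_\beta)^*\varphi(S_\beta)=\varphi(S_\beta)$ for a word $\beta$ of length $n$ and comparing coefficients of the linearly independent elements $S_{\alpha' j}S_j^*$ yields only the aggregate identities
\begin{equation*}
u_{\alpha',\beta}\Bigl(\sum_{\alpha:\,A_{\alpha_n,j}=1}u_{\alpha,\beta}^*u_{\alpha,\beta}\Bigr)=u_{\alpha',\beta},
\end{equation*}
which do not force any individual product $u_{\alpha_1,\beta_1}\cdots u_{\alpha_n,\beta_n}$ to be a partial isometry: that requires the source projection of $u_{\alpha_1,\beta_1}$ to commute with the range projection of the remaining product, and no such commutation follows from the algebra alone. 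This is precisely where the paper invests its main effort --- an induction on word length in which the commutation of $U_\varphi$ with $|D|=\Delta+M_L$ is tested against the functions $f_{\beta,\beta,\nu}$ and the explicit log-Laplacian eigenvalue coefficients $c_{\beta,\nu,j}$, with Lemma \ref{lem:FP_theorem} finally converting the resulting identity into the commutation $\sigma_{\alpha,\beta}r_{\delta,\nu}=r_{\delta,\nu}\sigma_{\alpha,\beta}$. Without an argument of this analytic kind, your factorisation lands in $\QG_A^1$, not in $\GAi$.
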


We call $\GA$ the $\ell$\emph{-Ariadne quantum group}. It is clear that for $\ell \leq \ell'$, $C(\mathbb G_{A}^{\ell'})$ is a subgroup of $\CA$. Further, they are genuinely quantum as they contain the dual of the free group $\F_{N}$, where $N$ is the number of vertices in the graph (this is responsible for what might be viewed as the \emph{quantum gauge action} on $O_A$). From the spectral standpoint of $D$, the picture of $\GA$ is as follows: view the infinite path space $\Sigma_A$ as the boundary of the rooted tree with root the empty path, and $\ell$-level vertices the finite paths of length $\ell\in \mathbb N$. This rooted tree provides a refining sequence of finite resolutions of $\Sigma_A$. Then, $\GA$ encodes the quantum symmetries of the dynamics in the Cuntz--Krieger algebra that happen at resolution $\ell$, where the latter is related to a finite (controlled by $\ell$) truncation of the spectrum of $D$. 

Therefore, in order to discover the quantum isometry group $\GAi$ one has to go through the whole \textit{thread} of the $\ell$-Ariadne groups $\GA$. Moreover, their behaviour is quite intricate, as seen from the following facts.

\begin{enumerate}
	\item There are instances that the thread does not terminate. A primary example is the full shift were the matrix $A:=\bf{1}_N$ consists only of $1$'s. In that case we prove that each $\QG_{\bf{1}_N}^\ell$ is a unitary easy quantum group in the class discovered by Mang in his thesis \cite{Mang2022phd}. Moreover, the quantum groups $\QG_{\bf{1}_N}^\ell$ are mutually non-isomorphic. 
	\item It can happen that the thread terminates in one step, namely $\QG_A^1=\GAi$. For example when $A$ is the adjacency matrix of the Fibonacci graph, in which case the quantum groups coincide with $\widehat{\F_{N}}$.
	\item The classical version of $\QG_A^1$ (arising from the abelianization of the algebra $C(\QG_A^1)$) coincides with that of $\GAi$, and is the classical isometry group of the spectral triple on $O_A$ associated with $D$, mentioned above. In other words, the classical isometry group can be derived from a finite truncation of $D$, whereas in general, the quantum one requires all of $D$.
\end{enumerate}

Moreover, the quantum groups $\GA$ appear to be closely connected both to \emph{quantum automorphism groups of graphs} (\cite{banica05quantum}), denoted by $\QAut(A)$, and to \emph{easy quantum groups} (\cite{banica09liberation}, \cite{freslon2023compact}). The full relationship between  $\QAut(A)$ and $\QG_A^\infty$ remains somewhat elusive, but we can see for example that $\QG_{\bf{1}_N}^\infty$ admits 
$\mathbb S_N^+\times \mathbb S_N^+$ as a \emph{subquotient}.

This leads us to the next main result of this paper, i.e.\ the fact that, although the action of the classical isometry group on $O_A$ is never ergodic, in the case of the Cuntz algebra the quantum isometry group does act ergodically.

\begin{thmx} \label{thmB}
	The action of $\QG_{\bf{1}_N}^\infty$ on $O_N$ is ergodic.
\end{thmx}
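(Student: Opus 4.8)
The plan is to derive ergodicity by playing off the two structural features of $\GAi=\QG_{\mathbf1}^\infty$ highlighted above: the quantum gauge subgroup $\widehat{\F_N}$, which collapses the fixed-point algebra onto an abelian ``Cantor core'', and the quantum permutation structure, which in turn collapses that core to the scalars. Throughout I write the action as $\alpha(S_\beta)=\sum_\alpha S_\alpha\ot u_{\alpha\beta}$ on the Cuntz generators $S_1,\dots,S_N$ of $O_N$, and $h$ for the Haar state of $\GAi$.

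First I would reduce to the diagonal. Since $\widehat{\F_N}$ is a quantum subgroup of $\GAi$ via the surjection $\pi\colon\CAi\to C^*(\F_N)$, $u_{\alpha\beta}\mapsto\delta_{\alpha\beta}g_\alpha$, every $\alpha$-invariant element is invariant under the restricted (quantum gauge) coaction $\alpha_\pi(S_\beta)=S_\beta\ot g_\beta$. On a monomial one computes $\alpha_\pi(S_\mu S_\nu^*)=S_\mu S_\nu^*\ot g_{\mu_1}\cdots g_{\mu_{|\mu|}}g_{\nu_{|\nu|}}^{-1}\cdots g_{\nu_1}^{-1}$, and freeness of $\F_N$ forces the group word to be trivial exactly when $\mu=\nu$. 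Hence $O_N^\alpha\subseteq O_N^{\alpha_\pi}=\mathcal D$, where $\mathcal D:=\overline{\operatorname{span}}\{P_\mu:=S_\mu S_\mu^*\}\cong C(\Sigma_N)$ is the commutative algebra of cylinder projections, i.e.\ the functions on the Cantor space.

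Next I would check that $\alpha$ restricts to $\mathcal D$ and identify the relevant corepresentations. Using that $q$ is a magic unitary (orthogonality of its columns) one gets $u_{\nu\mu}u_{\nu'\mu}^*=\delta_{\nu\nu'}p_{\nu\mu}$ for paths of equal length, so that $\alpha(P_\mu)=\sum_\nu P_\nu\ot p_{\nu\mu}$ and $\alpha(\mathcal D)\subseteq\mathcal D\ot\CAi$. A short induction from the level-one magic relations then shows that, for each $n$, the matrix $c^{(n)}=(p_{\nu\mu})_{|\nu|=|\mu|=n}$ is itself a magic unitary, hence a magic-unitary corepresentation on $\operatorname{span}\{P_\mu:|\mu|=n\}\cong\mathbb C^{N^n}$. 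Because $O_N^\alpha\subseteq\mathcal D$ and $E:=(\id\ot h)\circ\alpha$ is the conditional expectation onto the fixed points, with $E(P_\mu)=\sum_\nu P_\nu\,h(p_{\nu\mu})$, ergodicity is equivalent to the assertion that $h(p_{\nu\mu})=N^{-n}$ for all $\nu,\mu$ of length $n$; indeed this gives $E(P_\mu)=N^{-n}1$, whence $E(\mathcal D)=\mathbb C1$ and $O_N^\alpha=\mathcal D^\alpha=\mathbb C1$. Equivalently, I must prove that $\Fix(c^{(n)})$ is one-dimensional for every $n$. For $n=1$ this says that the magic unitary $p$ has one-dimensional fixed space; I would obtain it from the $\mathbb S_N^+$-factor of the $\mathbb S_N^+\times\mathbb S_N^+$ subquotient, whose ergodic action on $N$ points forces $\Fix(p)=\mathbb C\vec u$, so $h(p_{\nu\mu})=N^{-1}$.

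The main obstacle will be the higher levels, where $c^{(n)}$ is not simply a tensor power of $p$. Here I would exploit that $\GAi$ is of Kac type, so $h$ is a trace. Writing $p_{\nu\mu}=u_{\nu_1\mu_1}\,p_{\nu'\mu'}\,u_{\nu_1\mu_1}^*$ with $\nu'=(\nu_2,\dots,\nu_n)$ and $\mu'=(\mu_2,\dots,\mu_n)$, traciality yields $h(p_{\nu\mu})=h(q_{\nu_1\mu_1}\,p_{\nu'\mu'})$, reducing the computation to Haar integrals of products of entries of the source magic unitary $q$ at the first coordinate and the range magic unitary $p$ on the remaining coordinates. The crux is to show that these two families are \emph{independent} with respect to $h$, so that $h(q_{\nu_1\mu_1}p_{\nu'\mu'})=h(q_{\nu_1\mu_1})\,h(p_{\nu'\mu'})=N^{-1}\cdot N^{-(n-1)}$ by induction; this independence is the analytic shadow of the $\mathbb S_N^+\times\mathbb S_N^+$ structure. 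I expect this to be the technical heart of the proof, and I would attack it through the explicit easy (partition) description of $\QG_{\mathbf1}^\ell$ coming from Mang's classification---computing the pertinent Weingarten sums---or by establishing directly a tensor/free factorisation of $h$ on the subalgebras generated by $p$ and $q$. Once $h(p_{\nu\mu})=N^{-n}$ is secured, the reduction of the previous paragraph finishes the proof.
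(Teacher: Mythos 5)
Your first three steps are correct and coincide with the paper's own reductions: the containment $\Fix\,\varphi\subset C(\Sigma_{\bf{1}})$ via the quantum gauge subgroup $\widehat{\F}_N$ is exactly Proposition \ref{prop:commfix}, the restriction of the action to each finite level together with the formula $\varphi(S_\mu S_\mu^*)=\sum_\nu w_{\mu,\nu}\otimes S_\nu S_\nu^*$ and the fact that the level-$n$ range projections form a magic unitary is Proposition \ref{prop:comminv} (combined with Lemma \ref{lem: words_of_s}), and your reformulation of ergodicity as ``$h(w_{\mu,\nu})=N^{-n}$ for all $\mu,\nu$, i.e.\ $\dim\Fix(c^{(n)})=1$ for all $n$'' is a valid equivalence. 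Your $n=1$ argument is also sound: a fixed vector of $p$ remains fixed after applying the surjection of Proposition \ref{prop:magicinside} composed with projection onto one $\mathbb S_N^+$ factor, and $\Fix$ of the fundamental magic unitary of $\mathbb S_N^+$ is one-dimensional.

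The gap is the step you yourself call the technical heart: the higher levels. The claimed independence $h(q_{\nu_1\mu_1}\,p_{\nu'\mu'})=h(q_{\nu_1\mu_1})\,h(p_{\nu'\mu'})$ is not proved, and as stated the reduction is essentially circular, because the specific instances of independence you need are \emph{equivalent} to the ergodicity you are trying to establish (a posteriori both sides equal $N^{-n}$, but neither can be computed without already knowing $\dim\Fix(c^{(n)})=1$). Neither of your two suggested attacks closes this: the Haar state of $\QG_{\bf{1}}^\infty$ does \emph{not} factor through the quotient map onto $C(\mathbb S_N^+\times\mathbb S_N^+)$ (Haar states of quotients do not pull back), so the ``analytic shadow of the $\mathbb S_N^+\times\mathbb S_N^+$ structure'' gives no formula for $h$; and the Weingarten route runs precisely into the obstruction the paper flags after Corollary \ref{cor:diff_Gl}, namely that linear independence of the partition maps defining $H_N^{[\ell]+}$ is open because of crossings. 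The paper's actual proof of Theorem \ref{thm:Erg} avoids all Haar-state computations: it uses the explicit shift representation $\rho:C(\QG_{\bf{1}}^\infty)\to B(K)$ of Proposition \ref{prop:magicinside} to show that $w_{\mu,\nu}\neq 0$ for \emph{all} $\mu,\nu\in V_{\bf{1}}^k$, and then a positivity and coefficient-ordering argument on the fixed-point equation $\sum_\mu c_\mu w_{\mu,\nu}=c_\nu 1$ forces all nonzero $c_\mu$ to be equal and the supporting set $F$ to be everything (otherwise $\sum_{\mu\notin F}w_{\mu,\nu}=0$ with nonzero summands, a contradiction). Alternatively, your scheme can be completed with the same ingredient and no Haar calculus on $\CAi$ at all: applying the unital $*$-homomorphism $\rho$ to the fixed-point equation sends the level-$n$ entries to $P^{(1)}_{\mu_1\nu_1}\cdots P^{(n)}_{\mu_n\nu_n}$, i.e.\ onto the $n$-fold tensor product of fundamental magic unitaries, a corepresentation of $\mathbb S_N^+\times\cdots\times\mathbb S_N^+$ whose fixed space is one-dimensional because the Haar state of a product is the tensor product of Haar states; this forces $(c_\mu)$ to be constant. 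Either way, the missing ingredient is the model $\rho$ applied at \emph{every} level, not just at level one.
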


The above theorem tells us that our quantum isometry groups are much larger than their classical counterparts, which should be contrasted with the fact that quantum isometry groups of classical smooth manifolds coincide with their classical versions (\cite{goswamijoardar}). One might consider this difference to arise as a consequence of the fact that our `quantum manifold' $O_A$ is very non-commutative, or from the fact that our arguments also establish an existence of a compact quantum group acting ergodically on the Cantor set. Such quantum actions were considered in \cite{bhowmickgoswamiskalski}, \cite{bassiconti} and \cite{bronwlow25self}, where the constructions involved certain inductive limits. Here however, for the first time, we produce an example of an ergodic action of a compact \emph{matrix} quantum group.

\begin{thmx} \label{thmC}
	The action of $\GAi$ preserves the path space $\Sigma_A$ (i.e.\ a Cantor set). In particular, the (non-classical) compact matrix quantum group $\QG_{\bf{1}_N}^\infty$  acts ergodically on $\Sigma_{\bf{1}_N}$.
\end{thmx}

The detailed plan of the paper is as follows: after this introduction, in Section \ref{sec:preliminaries} we recall the connection between topological Markov chains, Cuntz--Krieger algebras, their groupoid picture and the associated spectral triples. Section \ref{sec:Quantum_isometries} introduces the appropriate isometric actions of compact quantum groups and defines the compact quantum groups $\QG_A^\ell$ which are the main objects of our study. This prepares the ground for Section \ref{sec:main}, where we show our main result, namely that $\QG_A^\infty$ is the isometry group of the Cuntz--Krieger algebra equipped with the relevant spectral triple. This is the content of Theorem \ref{thmA}, derived from Propositions \ref{prop:actions}, \ref{prop:Qgroup} and Theorem \ref{thm:factoring}. Section \ref{sec:properties} discusses the classical versions of our quantum groups, and presents properties of the relevant actions, in particular their ergodicity in the Cuntz algebra case. Here we establish Theorem \ref{thmB} (Theorem \ref{thm:Erg}) and Theorem \ref{thmC} (Theorem \ref{thm:ergfaith}).

\section{Preliminaries}\label{sec:preliminaries}

In this section we present a summary of background information regarding the Cuntz--Krieger algebras (\cite{cuntz80class}), focusing on their groupoid picture and a construction of spectral triples in \cite{gerontogiannis25heat}. All along this work, $N\geq 2$ will be a fixed non-negative integer,  and  $A\in M_N(\{0,1\})$ will be a \emph{primitive} matrix, meaning that there exists $k\in \N$ such that all the entries of $A^{k}$ are strictly positive. We will write $\N_0:= \N \cup \{0\}$.

\subsection{Topological Markov chains and Deaconu--Renault groupoid}

In this section, we will recall the main results of \cite{gerontogiannis25heat}, which lead to a construction of a natural spectral triple on Cuntz--Krieger algebras. The definition of that triple involves the geometry of the underlying topological Markov chain, and is best given in the setting of the corresponding Deaconu--Renault groupoid (\cite{deaconu}, \cite{kumjian97graphs}).

\subsubsection{Topological Markov chains}
The reader can find more on this topic in \cite{Katok_Has}. As mentioned above, $A\in M_N(\{0,1\})$ will be a primitive matrix. A word $x_{1}\ldots x_{k}\in\{1, \ldots, N\}^{k}$ is said to be \emph{admissible} if $A_{x_{i}x_{i+1}} = 1$ for all $1\leqslant i\leqslant k-1$ and for $k\in \N_0$ we denote by $V_A^k$  the collection of admissible words of length $k$, where $V_A^0$ contains only the empty word ${\o}$. We also set 
$$V_A:= \bigsqcup_{k\in \N_0} V_A^k.$$
 Often we will denote the length $k$ of $\alpha\in V_A^k$ by $|\alpha|$. 

Equip $\{1,\ldots , N\}^{\mathbb N}$ with the product topology and consider the closed subset of the latter space, 
$$\Sigma_A := \{x=(x_n)_{n\in \mathbb N} \in \{1,\ldots , N\}^{\mathbb N}: A_{x_n,x_{n+1}}=1\}.$$ 
A basis of open compact sets for the topology on $\Sigma_A$ is given by the \emph{cylinder sets} associated to finite words $\alpha = \alpha_1\ldots \alpha_n \in V_A$, defined as $$C(\alpha):= \{x\in \Sigma_A: x_i = \alpha_i,\,\, \text{for } 1\leq i \leq n\}.$$ The convention is that $C({\o}):= \Sigma_A.$ The dynamics on $\Sigma_A$ is given by the left shift map $\sigma_A: \Sigma_A \to \Sigma_A$, which is a local homeomorphism, giving rise to the topological Markov chain $(\Sigma_A,\sigma_A)$. 

For the metric structure on $(\Sigma_A,\sigma_A)$, let $\lambda >1$ be the \emph{expansion constant} (its precise value will play no role in what follows) and equip $\Sigma_A$ with the ultrametric
$$
\mathrm{d}(x,y):=\lambda^{-\inf \{n-1: n \in \N,x_n\neq y_n\}}, \;\; x, y \in \Sigma_A,
$$
with the convention that $\inf \varnothing = \infty$. We have that $\mathrm{d}(x,y)=\lambda^{-n}$ when there is a word $\alpha \in V_A^n$ such that $x=\alpha x'$ and $y=\alpha y'$, where $x'$ and $y'$ are infinite admissible words starting with different letters. Observe that for every $x\in \Sigma_A, n \in \N$ we have
$$
B(x,\lambda^{-n})=C(x_1\ldots x_n).
$$
Also, the shift map is locally expanding, and for $n>1$ it restricts to a homeomorphism 
$$B(x,\lambda^{-n})\to B(\sigma_A(x),\lambda^{-n+1}).$$

The canonical $\sigma_A$-invariant measure on the topological Markov chain $(\Sigma_A,\sigma_A)$ is the \emph{Parry measure}. It is defined in the following way. By $\lambda_{\max}>1$ we denote the \emph{Perron--Frobenius} (i.e.\ maximal) eigenvalue of $A$, by $\vec{u}=(u_j)_{j=1}^N$ the corresponding eigenvector and $\vec{v}=(v_j)_{j=1}^N$ the \emph{Perron--Frobenius eigenvector} of $A^T$. We first normalise $\vec{u}$ so that 
$$\sum_{j=1}^{N}u_j=1.$$ 
Then $\vec{u}$ itself is a probability distribution. We also normalise $\vec{v}$ so that $\vec{u}\cdot \vec{v} =1$. We denote this distribution by $\vec{p}:=(p_j=u_jv_j)_{j=1}^{N}$. 

Moreover, one can form the stochastic matrix $P\in M_N\{[0,1]\}$,
$$P_{i,j}:=\frac{A_{i,j}u_j}{\lambda_{\max}u_i}, \;\;\; i, j=1, \ldots, N$$
which measures the probability of transitioning from a vertex $i$ to $j$. The distribution $\vec{p}$ is stationary for $P$; i.e. $\vec{p}P=\vec{p}$. This gives rise to the $\sigma_A$-invariant Parry measure $\nu$ defined on cylinder sets $C(\alpha)$ for $\alpha=\alpha_1\ldots \alpha_n \neq {\o}$ by 
$$\nu(C(\alpha)):=p_{\alpha_1}P_{\alpha_1,\alpha_2}\ldots P_{\alpha_{n-1},\alpha_n}=\frac{v_{\alpha_1}u_{\alpha_n}}{\lambda_{\max}^{n-1}}.$$
Moreover, a useful property for us is that there is some $C\geq 1$ so that for every $x\in \Sigma_A$ and $0\leq r \leq 1$,
\begin{equation}\label{eq:Ahlfors_reg}
C^{-1}r^{\df}\leq \nu(B(x,r))\leq Cr^{\df}, \quad \df=\log_{\lambda}(\lambda_{\max}),
\end{equation}
where $\lambda$ is the expansion constant introduced above.
In other words, the measure $\nu$ is \emph{Ahlfors $\df$-regular}. Here $\df$ coincides with the Hausdorff dimension of $(\Sigma_A,d).$ Note that $\log(\lambda_{\max})>0$ is the topological entropy of $(\Sigma_A,\sigma_A)$. Also, following \cite{gerontogiannis25heat}, for every $r>0, s>0$ and $x \in \Sigma_A$ we have
\begin{equation}\label{eq:Ahlfors_int}
\int_{B(x,r)} \frac{1}{\d(x,y)^{\df -s}} \d \mu (y) \simeq r^s.
\end{equation} 

Contrary to \cite{gerontogiannis25heat}, here we will employ the log-Laplacian with respect to the \emph{conformal} version of $\nu$. This produces the same operator up to a canonical unitary intertwiner. Consider the continuous function $h:\Sigma_A\to (0,\infty)$ defined as $h(x):=v_{x_1}$, and define the Borel probability measure $$\d \mu:=\frac{1}{h}\d \nu.$$ On cylinder sets it is given by 
$$\mu(C(\alpha))=\frac{u_{\alpha_n}}{\lambda_{\max}^{n-1}}, \;\;\; \alpha = \alpha_1 \cdots \alpha_n \in V_A^n.$$ 
The interesting part of $\mu$ is that it is $\lambda_{\max}$-conformal, and so for all $\alpha$ as above and $k \in \N_0$, $k\leq |\alpha|$ we have 
\begin{equation}\label{eq:conformal}
\mu(\sigma^{k}(C(\alpha)))=\lambda_{\max}^k\mu(C(\alpha)).
\end{equation}
Also, $\mu$ is an $\lambda_{\max}$-eigenvector of the dual of the \emph{transfer map} $L:C(\Sigma_A)\to C(\Sigma_A)$, defined for $f \in C(\Sigma_A)$ and $x \in \Sigma_A$ by 
 $$(Lf)(x):=\sum_{y\in \sigma_A^{-1}(x)} f(y).$$

\subsubsection{The groupoid picture}

We now recast the topological Markov chain $(\Sigma_{A},\sigma_{A})$ in a groupoid context, so as to make connections with operator algebras and non-commutative geometry. We refer the reader, for instance, to \cite{williams19tool} and \cite{Aidan} for a comprehensive treatment of groupoids and their associated $C^*$-algebras. 

We consider  the {\'e}tale groupoid 
$$
\Gamma_A=\{(x,n,y)\in \Sigma_A\times \mathbb Z \times \Sigma_A: \exists k\in \N_0\,\, \text{such that}\,\,n+k\geq 0 \,\,\text{and}\,\,  \sigma_A^{n+k}(x)=\sigma_A^{k}(y)\}\rightrightarrows \Sigma_A.
$$
The source and range maps of $\Gamma_A$ are given by the formulae
$$
s(x,n,y):=y,\qquad r(x,n,y):=x,
$$
and the partial multiplication and inversion are given by 
$$m((x,n,y),(y,\ell,z))\equiv(x,n,y)(y,\ell,z):=(x,n+\ell,z),\quad (x,n,y)^{-1}:=(y,-n,x).$$
Further, consider the maps $\kappa:\Gamma_A\to \mathbb{N}_0$ and $c:\Gamma_A\to \mathbb Z$ defined as 
\begin{align*}
\kappa(x,n,y)&:=\min \left\{k\geq \max \{0,-n\}: \sigma_A^{n+k}(x)=\sigma_A^k(y)\right\},\\
c(x,n,y)&:=n.
\end{align*}

Note that \cite{gerontogiannis25heat} used the symbol $G_A$ for the groupoid above; we change it here, as to avoid the confusion with the (quantum) isometry groups to be considered later in the text.

We equip $\Gamma_A$ with the coarsest topology that makes the maps $s,r,\kappa,$ and $c$ continuous. Then, $\Gamma_{A}$ becomes a locally compact, totally disconnected \'etale groupoid. The map $c:\Gamma_A\to\Z$ is a groupoid homomorphism and the formula
\begin{equation}
\alpha_{z}(f)(x,n,y):=z^{n}f(x,n,y),\quad f\in C_{c}(\Gamma_{A}), \,\, (x, n,y) \in \Gamma_A, \, z\in\mathbb{T},\end{equation}
defines an action of the circle $\mathbb{T}$ by $*$-automorphisms of the reduced $C^{*}$-algebra $C^{*}_{r}(G_{A})$, called the \textit{gauge action}. A basis for the topology of $\Gamma_{A}$ is given by \emph{bisections} associated to $\alpha$, $\beta\in V_A$;
$$C(\alpha,\beta):=\{(x,|\alpha|-|\beta|,y)\in \Gamma_A: x\in C(\alpha), y\in C(\beta), \sigma_A^{|\alpha|}(x)=\sigma_A^{|\beta|}(y)\}.$$

According to \cite{gerontogiannis25heat}, $\Gamma_A$ admits a decomposition over a finer collection of clopen bisections as well as a metric-measure structure. Namely, 
\begin{equation}\label{eq:{A}decomp}
\Gamma_A=\bigsqcup_{\gamma\in I_A} \Gamma_{\gamma}.
\end{equation} 
Here, the index set is
\begin{equation}\label{eq:indexset}
I_A:=\{\gamma=\alpha.\beta\in V_A\times (V_A\setminus \{{\o}\}):\; \alpha={\o}\;\mbox{or}\; (\alpha\neq {\o}, A_{\alpha_{|\alpha|},\beta_{|\beta|}}=1, \alpha_{|\alpha|}\neq \beta_{|\beta|-1})\}.
\end{equation}
In \eqref{eq:indexset} we make the convention that $\beta_0={\o}.$ The sets appearing in the decomposition are 
\begin{equation*}
\Gamma_{\alpha.\beta}:= \left\{(x,n,y)\in \Gamma_A: \begin{matrix} x=\alpha\sigma_A^{|\beta|-1}(y), \; y=\beta\sigma_A^{|\beta|}(y),\; \mbox{and}\\ 
n=|\alpha|-|\beta|+1, \; \kappa(x,n,y)=|\beta|-1\end{matrix} \right\}.
\end{equation*}
For $\gamma=\alpha.\beta\in I_A$, we write
$$r(\gamma):=\alpha\quad\mbox{and}\quad s(\gamma):=\beta.$$
By construction, we have that 
\begin{equation*}
\kappa|_{\Gamma_\gamma}=|s(\gamma)|-1 \quad\mbox{and}\quad c|_{\Gamma_\gamma}=|r(\gamma)|-|s(\gamma)|+1.
\end{equation*}
Moreover, for each $\gamma=\alpha.\beta \in I_A$ the clopen set $\Gamma_\gamma$ is a subset of $C(\alpha \beta_{|\beta|},\beta)$, and hence a bisection as well. Also, the restricted source map 
$$s_{\gamma}:=s|_{\Gamma_\gamma}:\Gamma_{\gamma}\to C(s(\gamma))$$
is a homeomorphism.
 
\begin{remark}\label{rem: char_functions}
For $\alpha \in V_A^1$ denote by $\chi_{\alpha}$ the characteristic function of $$\{ (x,1,\sigma_A(x)): x\in C(\alpha)\}.$$ For $\alpha = {\o}$ we set $\chi_{{\o}}=1$. Then, for $\alpha=\alpha_1\ldots \alpha_n$ we write $\chi_{\alpha}:= \chi_{\alpha_1}\star \ldots \star \chi_{\alpha_n},$ where $\star$ denotes the convolution of $C_c(\Gamma_A)$. The constructions in \cite[Subsection 3.1]{gerontogiannis25heat} immediately give that the space $\LC$ of locally constant functions on $\Gamma_A$ with compact support is spanned by the following collection; for $\alpha.\beta \in I_A$ and $\nu \in V_A$ such that $\beta \nu \in V_A$, consider $$\chi_{\alpha \beta_{|\beta|} \nu} \star \chi_{\beta \nu}^*=\chi_{s^{-1}_{\alpha.\beta} (C(\beta \nu))}.$$ In particular, for $\nu = {\o}$ the function $\chi_{\alpha \beta_{|\beta|}}\star \chi_{\beta}^*$ is the characteristic function of $\Gamma_{\alpha.\beta}\subset \Gamma_A.$
\end{remark}

\begin{remark}\label{rem: char_functions2}
It is important to note that for $\alpha, \beta\in V_A\setminus \{{\o}\}$ with $\alpha_{|\alpha|} = \beta_{|\beta|}$  the support $$\supp(\chi_{\alpha}\star \chi_{\beta}^*)\subset \Gamma_{\gamma},$$ where $\gamma = \alpha_1\ldots \alpha_{|\alpha|-|\overline{\alpha}\wedge \overline{\beta}|}. \beta_1 \ldots \beta_{|\beta|-|\overline{\alpha}\wedge \overline{\beta}|+1}.$ Here $\overline{\alpha}:=\alpha_{|\alpha|}\ldots \alpha_1$ is the flipped $\alpha$, which in general will not be in $V_A$, and similarly for $\overline{\beta}$. Further, $|\overline{\alpha}\wedge \overline{\beta}|\geq 1$ is the length of the common path of $\overline{\alpha}$ and $\overline{\beta}$. Moreover, we make the convention that $r(\gamma)={\o}$ when $|\alpha|=|\overline{\alpha}\wedge \overline{\beta}|$.
\end{remark}

We now equip $\Gamma_A$ with an Ahlfors regular metric-measure structure. For every $\gamma \in I_A$, we define the pull-back metric $\mathrm{d}_{\gamma}$ on $\Gamma_{\gamma}$ by 
$$\mathrm{d}_{\gamma}(g_1,g_2):=\mathrm{d}(s_{\gamma}(g_1),s_{\gamma}(g_2)).$$ 
In other words, the metric $\mathrm{d}_\gamma$ on $\Gamma_\gamma$ can be defined from declaring $s_\gamma:\Gamma_\gamma\to C(s(\gamma))$ to be an isometry. Then, we define an extended metric $\mathrm{d}_{\Gamma_{A}}$ on $\Gamma_A$, generating the topology on $\Gamma_A$, by 
$$\mathrm{d}_{\Gamma_A}(g_1,g_2):=
\begin{cases}
\mathrm{d}_{\gamma}(g_1,g_2), & \textup{if  for some } \gamma\in I_A \textup{ both}\; g_1,g_2\in \Gamma_\gamma,\\
\infty, & \textup{ otherwise}.
\end{cases}
$$

Next we define the pull-back finite Borel measure $\mu_{\gamma}:=s_\gamma^*\mu$ on $\Gamma_{\gamma}$. For an open set $B\subset \Gamma_{\gamma}$, $\mu_\gamma$ we have 
$$\mu_{\gamma}(B)= \mu(s_{\gamma}(B)).$$ 
The collection $\{\mu_{\gamma}\}_{\gamma\in I_A}$ gives a Borel measure $\mu_{\Gamma_A}$ on $\Gamma_A$. Now since each $s_{\gamma}(\Gamma_{\gamma})=C(s(\gamma))$ is clopen in $\Sigma_A$, and for the latter the measure $\mu$ is Ahlfors $\df$-regular, as mentioned above, we obtain that for every $\gamma \in I_A$, the metric-measure space $(\Gamma_{\gamma},\mathrm{d}_{\gamma},\mu_{\gamma})$ is Ahlfors $\df$-regular with $\df= \log_{\lambda}(\lambda_{\max}).$

\subsection{Log-Laplacian and Hamiltonian operators}\label{sec:G_A_MMS}
The main point of this subsection is to present the analogue of a Dirac operator on $\Gamma_A$. This was developed in \cite{gerontogiannis25heat},  inspired by the logarithmic Laplace--Beltrami operator on Ahlfors regular spaces discussed in \cite{gerontogiannis25dirichlet}. The Dirac operators as above provide Cuntz--Krieger algebras with a differential structure via spectral triples. For the notion of spectral triples we refer to \cite{connes94noncommutative}. 

We now summarize the construction of this so-called \emph{log-Laplacian} and its corresponding Hamiltonian. The construction starts with the $L^{2}$-space: note that 
$$L^2(\Gamma_A, \mu_{\Gamma_A})=\bigoplus_{\gamma\in I_A}L^2(\Gamma_{\gamma},\mu_{\gamma}).$$
For $\gamma\in I_{A}$,  define a positive essentially self-adjoint operator $\Delta_{\gamma}$ acting on locally constant functions $f:\Gamma_{\gamma}\to \mathbb C$ through the formula
$$(\Delta_{\gamma} f)(g) = \int_{\Gamma_{\gamma}} \frac{f(g)-f(h)}{\d_{\gamma}(g,h)^{\df}}\d \mu_{\gamma} (h),\;\;\; g \in \Gamma_\gamma,$$
noting that $\ker \Delta_{\gamma} = \mathbb C \chi_{\Gamma_{\gamma}}$. The \emph{log-Laplacian} is then $\Delta=\bigoplus_{\gamma\in I_A}\Delta_{\gamma}$. In particular, the space $\LC$ is a core for $\Delta$, and for functions $f \in \LC$ we still have the integral representation
\begin{equation}\label{eq:Int_rep}
(\Delta f)(g)=\int_{\Gamma_A} \frac{f(g)-f(h)}{\mathrm{d}_{\Gamma_A}(g,h)^{\df}} \d \mu_{\Gamma_A}(h), \;\; g \in \Gamma_A.
\end{equation}

The operator $\Delta$ admits an explicit spectral decomposition with eigenfunctions forming an orthonormal basis $(\mathrm{e}_{\gamma},\mathrm{e}_{(\gamma,\nu,j)})_{(\gamma,\nu,j)\in \mathfrak{I}_A}\subset \LC$  for $L^2(\Gamma_A,\mu_{\Gamma_A})$, where $\mathfrak{I}_A$ is an appropriate index set -- we refer to \cite[Theorem 3.9] {gerontogiannis25heat} for the details. More specifically,  $$\mathrm{e}_{\gamma}:=\mu(\Gamma_{\gamma})^{-1/2} \chi_{\Gamma_{\gamma}}, \;\;\; \gamma \in I_A,$$ 
and each $\mathrm{e}_{(\gamma,\nu,j)}$ is a \emph{Haar wavelet} supported on $s^{-1}_{\gamma}(C(\nu))$, where in turn $\nu \in V_A \setminus \{{\o}\}$ starts with $s(\gamma)\neq {\o}.$ The kernel  $\ker(\Delta)$ is the closed linear span of $(\mathrm{e}_{\gamma})_{\gamma\in I_A}$ and for $(\gamma,\nu,j)\in \mathfrak{I}_A$ we have 
$$\Delta \mathrm{e}_{(\gamma,\nu,j)}=\lambda_{s(\gamma)}^A(\nu)\mathrm{e}_{(\gamma,\nu,j)},$$ where if $C(\nu)=C(s(\gamma))$ then $\lambda^A_{s(\gamma)}(\nu)=\lambda_{\max} u_{\nu_{|\nu|}}$, and if $C(\nu)\subsetneq C(s(\gamma))$ then
\begin{equation}\label{eq:eigenvalues}
\lambda_{s(\gamma)}^A(\nu)=\lambda_{\max} u_{\nu_{|\nu|}}+\lambda_{\max}\sum_{k=0}^{|\nu|-|s(\gamma)|-1}u_{\nu_{|\nu|-k-1}}(1-P_{\nu_{|\nu|-k-1}, \nu_{|\nu|-k}}).
\end{equation}
Recall that $P$ is the transition probability on the graph underlying $A$.

Although the commutators of $\Delta$ with functions in $\LC$ extend to bounded operators on $L^2(\Gamma_A,\mu_A)$, the operator $\Delta$ does not have compact resolvent, so it does not generate a spectral triple. To this end, we introduce the addition of a potential $V$. To define $V$, first consider the multiplication operator $M_{L}$ determined by a proper continuous `length' function $L:\Gamma_A\to \mathbb{N}_0$ (not to be confused with the transfer map) defined as
\begin{equation}\label{eq:scale}
L|_{\Gamma_\gamma}:=|\gamma|=|r(\gamma)|+|s(\gamma)|, \;\;\; \gamma \in \Gamma_A.
\end{equation}
Note that $M_{L}$ is positive and essentially self-adjoint on ${\rm C_{c}^\infty}(\Gamma_A)$. Consider further the space of finite words with a distinguished ending: 
\begin{equation}\label{eq:Fock}
\tilde{V}_A:=\{\alpha.\beta\in V_A\times \{1,\ldots,N\}: \alpha\beta\in V_A\}=\{\gamma\in I_A: |s(\gamma)|=1\},
\end{equation}
writing $\ell^2(\tilde{V}_A)$ for the associated Hilbert space, and consider the Hilbert space 
$$F_A:=\bigoplus_{\gamma\in \tilde{V}_A}\ker \Delta_{\gamma}\subset L^2(\Gamma_A,\mu_{\Gamma_A}),$$ 
and the associated projection $P_A$ from $L^2(\Gamma_A,\mu_{\Gamma_A})$ onto $F_A$.

We are now ready to complete the construction: the \emph{Hamiltonian} is the operator 
$$D:= - \Delta + V,\qquad V:= (2P_A-1)M_{L}.$$ 
Thus for $f\in \LC$ we have 
\begin{equation}\label{eq:D}
Df=
\begin{cases}
-(\Delta+M_{L})f &\text{if } f\in F_A^{\perp},\\
M_{L}f &\text{if } f\in F_A,
\end{cases}
\end{equation}
and since $D=(2P_A-1)(\Delta + M_{L})$, we have that $|D|=\Delta + M_{L}.$

What is particularly important for us here is that the $1$-eigenspace of $D$ is the finite dimensional subspace $$E_1:= \langle \chi_{\beta}\star \chi_{\beta}^*: \beta \in V_A^1 \rangle,$$ and the $2$-eigenspace is the finite dimensional subspace $$E_2:= \langle \chi_{\alpha \beta} \star \chi_{\beta}^*: \alpha,\beta \in V_A^1\,\, \text{and } \alpha.\beta \in I_A\rangle.$$

Eventually it can be shown (see \cite[Proposition 5.5]{gerontogiannis25heat}) that $(\LC, L^2(\Gamma_A,\mu_{\Gamma_A}), D)$ is a spectral triple on $C^*_r(\Gamma_A)$, where $\LC$ is represented on $L^2(\Gamma_A,\mu_{\Gamma_A})$ via convolution operators and $C^{*}_{r}(\Gamma_{A})$ is the closure of the image of this representation. In general, the operator $D$ introduced above has non-trivial $K$-homology class. As we will not need hereafter any specific result about the structure of $(\LC, L^2(\Gamma_A,\mu_{\Gamma_A}), D)$, we omit the definition of spectral triples.

\subsection{Cuntz--Krieger algebras}

The groupoid picture outlined above is practical for the definition of the operator $D$. In order though to study its interaction with compact quantum group theory, we have to explain the so-called \emph{Cuntz--Krieger} picture introduced in \cite{cuntz80class}. This will also bridge our construction of the quantum isometry groups with the theory of quantum automorphism groups of directed graphs.

The Cuntz--Krieger algebra $O_{A}$ is the universal $C^{*}$-algebra generated by the elements $S_{1}, \ldots, S_N$  subject to the relations
\begin{equation}\label{eq: CK-relations} 
\sum_{j=1}^{N}S_{j}S_{j}^{*}=1 \quad \mbox{and}\quad  S_{i}^{*}S_{k}=\delta_{ik}\sum_{j=1}^{N}A_{ij}S_{j}S_{j}^{*},  \;\;i,k=1,\ldots, N.
\end{equation}
The universal $C^{*}$-algebra $O_{A}$ carries an action of the group $\mathbb{T}$ by $*$-automorphisms, known as the \emph{gauge action} and determined by the formula 
\begin{equation}
\label{eq:gauge-action}
\alpha_{z}(S_{i}):=zS_{i},\quad z\in\mathbb{T},\,\,i\in\{1,\cdots, N\}.
\end{equation} 

There is a canonical \emph{KMS state} on $O_{A}$ with respect to the gauge action, which we will denote by $\tau$. Further, $\rho:O_A\to B(L^2(O_A,\tau))$ shall denote the corresponding faithful GNS representation with $\xi\in L^2(O_A,\tau)$ being the associated cyclic vector, so that $\tau(x)=\left\langle \xi, \rho(x)\xi \right\rangle$, $x \in O_A$. Using the Cuntz--Krieger relations and the $\log(\lambda_{\max})$-KMS condition of $\tau$ we obtain that for every $\alpha,\beta\in V_A\setminus \{{\o}\}$
\begin{equation}\tau(S_{\alpha}S^*_{\beta})=\frac{\delta_{\alpha,\beta}}{\lambda_{\max}^{|\alpha|}}\sum_{j=1}^{N} A_{\alpha_{|\alpha|},j}\tau(S_jS_j^*),
\label{tauKMS}	\end{equation} 
where naturally $S_\alpha:=S_{\alpha_1}\cdots S_{\alpha_n}$, and similarly for $S_\beta$.

The $C^*$-algebra $O_A$ is canonically isomorphic to $C^*_r(\Gamma_A)$, studied in the last subsection, through the map $$S_i\mapsto \chi_i \star (\cdot).$$ This isomorphism is also equivariant for the respective $\mathbb T$-actions. Under this isomorphism the $^*$-algebra generated by $\{S_i: 1\leq 1 \leq N\}$ is identified with the space of locally constant functions with compact support $C_c^{\infty}(\Gamma_A)$. Moreover, for $f\in C_c(\Gamma_A)$ we have 
\begin{equation}\label{eq:KMS}
\tau(f)=\int_{\Sigma_A} f(x,0,x) \d \mu (x).
\end{equation}
Finally, $L^2(O_A,\tau)$ can be identified with $L^2(\Gamma_A,\mu_{\Gamma_A})$ by interpreting (for $i=1, \ldots, N$) the vector $\rho(S_i)\xi $ as the function $\chi_i$. That being said, we will sometimes view $D$ as an operator that acts on $L^2(O_A,\tau)$ instead of $L^2(\Gamma_A,\mu_{\Gamma_A})$.

We shall also need one more well-known fact: the closed linear span of $\{S_\alpha S_\alpha^*: \alpha \in V_A\}$ inside $O_A$ is a commutative $C^*$-algebra which is isomorphic to $C(\Sigma_A)$. The isomorphism maps each  $S_\alpha S_\alpha^*$ to the characteristic function of the appropriate cylinder; that is, $\chi_{C(\alpha)}$.

\section{Quantum isometries of $O_A$}\label{sec:Quantum_isometries}

The goal of this work is to understand the quantum symmetries of the spectral triple introduced above. The classical isometry group was computed in \cite{gerontogiannis25heat} and shown to be isomorphic to the permutational wreath product $\mathbb{T}\wr\mathrm{Aut}(A)$, where $\mathrm{Aut}(A)$ is the automorphism group of the directed graph with adjacency matrix $A$, naturally viewed as a subgroup of the permutation group $\mathbb S_N$ (see also the last section of this paper). In this section, we will recall some basic definitions and facts on compact quantum groups and establish some first results on their isometric actions on $O_{A}$. We will also introduce a new family of compact quantum groups, which will then be used to describe the quantum isometries of the spectral triple on $O_{A} = C^{*}_{r}(\Gamma_{A})$.

\subsection{Compact quantum groups}

We refer the reader to \cite{neshveyev13compact} for a comprehensive treatment of the theory of compact quantum groups and proofs of the results mentioned below. Also, we follow the convention that tensor products of $C^*$-algebras are always minimal. 

A \emph{compact quantum group} $\mathbb{G} = (\CG, \Phi)$ is given by a $C^*$-algebra $\CG$ and a $*$-homomorphism $\Phi : \CG\to \CG\otimes \CG$ such that
\begin{itemize}
\item $(\Phi\otimes \mathrm{id})\circ\Phi = (\mathrm{id}\otimes \Phi)\circ\Phi$;
\item The linear spans of $(1\otimes \CG)\Phi(\CG)$ and $(\CG\otimes 1)\Phi(\CG)$ are dense in $\CG\otimes \CG$.
\end{itemize}

It is a fundamental fact of the theory that there is a unital dense $*$-subalgebra $\Pol(\QG)$ of $\CG$ which admits a Hopf $*$-algebra structure via the comultiplication $\Phi$ and the multiplication $m$ of $\CG$, so that in particular $\Phi(\Pol(\QG)) \subset \Pol(\QG) \odot \Pol(\QG)$. We will denote its counit by $\varepsilon$ and its antipode by $k$. Note that $k$ is invertible with $k^{-1} = *\circ k \circ *$, and that $\Pol(\mathbb G)$ is also a Hopf $*$-algebra for the structure maps $\Phi, m^{\op},\varepsilon,k^{-1}.$

\begin{remark}
In the sequel, given any compact quantum group $\mathbb G$ we shall view $\CG \otimes L^2(O_A,\tau)$ as a right Hilbert $\CG$-module, with the module structure given by the right multiplication $\cdot$ in the $\CG$-coordinate. The symbol $B_{\CG}(\CG\otimes L^2(O_A,\tau))$ will denote the space of all adjointable linear operators on this Hilbert module. Further, $\Pol(\mathbb G)$ denotes the span of the coefficients of finite dimensional unitary representations of $\mathbb G$.
\end{remark}

Let us now give a formal definition of an action of a compact quantum group $\QG$ on a Cuntz--Krieger algebra $O_{A}$, since this will be our main object of study.

\begin{definition}\label{def:cqgaction}
A \emph{(left) action} of $\QG$ on $O_{A}$ is a unital $*$-homomorphism $$\varphi:O_A\to C(\mathbb G)\otimes O_A$$ such that
\begin{itemize}
\item $(\Phi \otimes \mathrm{id})\circ\varphi = (\mathrm{id}\otimes \varphi)\circ\varphi$;
\item $\varphi(O_{A})(\CG\otimes 1)$ is dense in $\CG\otimes O_{A}$.
\end{itemize}
\end{definition}

It will be important for us to be able to implement such an action through conjugation by a unitary operator at the level of $L^{2}$-spaces. We show below that this is possible provided the canonical KMS state is preserved. This is well-known, but we write down the details to fix the conventions we are using. Recall that $(\rho, L^2(O_A, \tau), \xi)$ is the GNS triple of the KMS state $\tau$.

\begin{lemma}\label{lem:preserveKMS_1}
Let $\varphi:O_A\to C(\mathbb G)\otimes O_A$ be an action of a compact quantum group $\mathbb G$ that preserves the KMS state $\tau$; i.e. for $T\in O_A$, $$(\id \otimes \tau)(\varphi(T))=\tau(T)1.$$
Then, the linear map $V_{\varphi}:L^2(O_A,\tau)\to \CG \otimes L^2(O_A,\tau)$ defined for $\rho(T)\xi\in \rho(O_A)\xi$ by 
\begin{equation}\label{eq:preserveKMS_1_0}
V_{\varphi}(\rho(T)\xi):=(\id\otimes \rho)(\varphi(T))(1\otimes \xi)
\end{equation}
yields a unitary representation of $\mathbb G$ on $L^2(O_A,\tau)$ through the operator $U_{\varphi}\in B_{\CG}(\CG\otimes L^2(O_A,\tau))$ given on $g\otimes \rho(T)\xi \in  \CG \odot \rho(O_A)\xi$ by 
\begin{equation}\label{eq:preserveKMS_1_1}
U_{\varphi}(g \otimes \rho(T)\xi):=V_{\varphi}(\rho(T)\xi)\cdot g.
\end{equation}
Moreover, $U_{\varphi}$ implements $\varphi$, in the sense that, for $T\in O_A$ we have 
$$(\id\otimes \rho)(\varphi(T))=U_{\varphi}(1\otimes \rho(T))U_{\varphi}^*.$$ 
\end{lemma}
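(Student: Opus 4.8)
The plan is to proceed in four stages: (i) establish that $V_\varphi$ is a well-defined isometry of $L^2(O_A,\tau)$ into the Hilbert $\CG$-module $\CG\otimes L^2(O_A,\tau)$; (ii) extend it by right $\CG$-linearity to $U_\varphi$ and show the latter is a unitary; (iii) check that $U_\varphi$ is a representation; and (iv) verify the implementation identity.

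First I would compute the $\CG$-valued inner product of two vectors in the range of $V_\varphi$. Recall that on the right Hilbert $\CG$-module $\CG\otimes L^2(O_A,\tau)$ the inner product reads $\langle a\otimes\zeta, b\otimes\eta\rangle = \langle\zeta,\eta\rangle\, a^*b$. Using that $\varphi$ is a $*$-homomorphism, one gets for $T,T'\in O_A$
$$\langle V_\varphi(\rho(T)\xi), V_\varphi(\rho(T')\xi)\rangle = (\id\otimes\tau)\big(\varphi(T)^*\varphi(T')\big) = (\id\otimes\tau)(\varphi(T^*T')).$$
The KMS-invariance hypothesis turns the right-hand side into $\tau(T^*T')1 = \langle\rho(T)\xi,\rho(T')\xi\rangle 1$. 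This simultaneously shows that $V_\varphi$ is well defined on $\rho(O_A)\xi$ (a representative $T$ with $\rho(T)\xi=0$ has $\tau(T^*T)=0$, hence $V_\varphi(\rho(T)\xi)=0$) and that it is isometric; since $\rho(O_A)\xi$ is dense, $V_\varphi$ extends to an isometry of Hilbert modules. Combining this identity with $\langle x\cdot g, y\cdot g'\rangle = g^*\langle x,y\rangle g'$ shows that the formula $U_\varphi(g\otimes\rho(T)\xi) := V_\varphi(\rho(T)\xi)\cdot g$ is isometric on the algebraic tensor product $\CG\odot\rho(O_A)\xi$, hence extends to an adjointable isometry $U_\varphi$ on $\CG\otimes L^2(O_A,\tau)$.

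To upgrade $U_\varphi$ to a unitary I would use the density (Podle\'s) condition of the action. Unwinding the definitions, the range of $U_\varphi$ is the closed linear span of $(\id\otimes\rho)\big(\varphi(O_A)(\CG\otimes 1)\big)(1\otimes\xi)$. Since $\varphi(O_A)(\CG\otimes 1)$ is dense in $\CG\otimes O_A$ and, by cyclicity of $\xi$, the set $\{a\otimes\rho(S)\xi : a\in\CG,\ S\in O_A\}$ is total in $\CG\otimes L^2(O_A,\tau)$, the range of $U_\varphi$ is dense; being isometric it has closed range, so it is surjective, and a surjective module isometry is unitary with $U_\varphi^*=U_\varphi^{-1}$. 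I expect this step --- converting the non-degeneracy of the action into surjectivity at the Hilbert-module level --- to be the only genuinely non-formal point, the remainder being bookkeeping. For the representation property, applying $\Phi\otimes\id$ to $V_\varphi(\rho(T)\xi)$ and using the coassociativity $(\Phi\otimes\id)\circ\varphi = (\id\otimes\varphi)\circ\varphi$ together with $(\Phi\otimes\id)\circ(\id\otimes\rho)=(\id\otimes\id\otimes\rho)\circ(\Phi\otimes\id)$ yields the cocycle identity $(\Phi\otimes\id)(V_\varphi(\zeta)) = (\id\otimes V_\varphi)(V_\varphi(\zeta))$, which is precisely the representation relation $(\Phi\otimes\id)(U_\varphi) = (U_\varphi)_{13}(U_\varphi)_{23}$.

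Finally, the implementation identity is checked on the dense subspace $\CG\odot\rho(O_A)\xi$. For $T,S\in O_A$ and $g\in\CG$, using multiplicativity of $\varphi$ (so that $V_\varphi(\rho(TS)\xi)=(\id\otimes\rho)(\varphi(T))(\id\otimes\rho)(\varphi(S))(1\otimes\xi)$) and the fact that $(\id\otimes\rho)(\varphi(T))$ is a right $\CG$-module map, both $U_\varphi(1\otimes\rho(T))(g\otimes\rho(S)\xi)$ and $(\id\otimes\rho)(\varphi(T))U_\varphi(g\otimes\rho(S)\xi)$ reduce to $(\id\otimes\rho)(\varphi(T))(\id\otimes\rho)(\varphi(S))(1\otimes\xi)\cdot g$. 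This gives $(\id\otimes\rho)(\varphi(T))U_\varphi = U_\varphi(1\otimes\rho(T))$ on a dense subspace, hence everywhere, and multiplying by $U_\varphi^*$ on the right yields the claimed formula $(\id\otimes\rho)(\varphi(T)) = U_\varphi(1\otimes\rho(T))U_\varphi^*$.
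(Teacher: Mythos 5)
Your proposal is correct and follows essentially the same route as the paper's proof: KMS-invariance gives the $\CG$-valued inner product identity making $V_\varphi$ (hence $U_\varphi$) a module isometry, the Podle\'s density condition yields dense range and thus unitarity, coassociativity gives the representation relation $(\Phi\otimes\id)(U_\varphi)=(U_\varphi)_{13}(U_\varphi)_{23}$, and multiplicativity of $\varphi$ verifies the implementation identity on the dense subspace $\CG\odot\rho(O_A)\xi$. You merely spell out the steps the paper dismisses as ``clear'' or ``straightforward'', which is fine.
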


\begin{proof}
For every $T\in O_A$, the operator $(1\otimes \rho)(\varphi(T))\in \CG \otimes B(L^2(O_A,\tau))$ is adjointable, with adjoint $(1\otimes \rho)(\varphi(T^*))$, as $\CG \otimes B(L^2(O_A,\tau))$ embeds in $B_{\CG}(\CG\otimes L^2(O_A,\tau)).$ Then, from the KMS preservation we see that for every $T,R \in O_A$, $$\langle V_{\varphi}(\rho(T)\xi), V_{\varphi}(\rho(R)\xi) \rangle_{\CG}=\langle \rho(T)\xi, \rho(R)\xi \rangle_{L^2(O_A,\tau)}1.$$ In other words, $V_{\varphi}$ is an isometry and hence $U_{\varphi}$ is an isometry on $\CG\otimes L^2(O_A,\tau)$. Also, since the linear span of $\varphi(O_A) (\CG \otimes 1)$ is dense in $\CG \otimes O_A$, it immediately follows that $U_{\varphi}$ has dense range, hence it is a unitary in $B_{\CG}(\CG\otimes L^2(O_A,\tau))$. The fact that $U_\varphi$ satisfies $(\Phi \otimes \id) (U_{\varphi})= (U_{\varphi})_{13}(U_{\varphi})_{23}$ is clear since $\varphi$ is a left action. Finally, it is straightforward to show that $(\id\otimes \rho)(\varphi(T))U_{\varphi}=U_{\varphi}(1\otimes \rho(T))$, for all $T\in O_A$.
\end{proof}

We can now define the crucial notion of this work, that of an isometric compact quantum group action with respect to the spectral triple on $O_A$. Several definitions of an isometric action of a quantum group have appeared in the literature, and we will use the one which fits best our framework. Note that as the eigenspaces of the Dirac operator we consider may in fact be viewed as finite subspaces spanning a dense subalgebra of the Cuntz-Krieger algebra $O_A$, we could also equivalently phrase the isometry requirement in terms of the orthogonal filtrations of \cite{banicaskalski}. We refer to \cite{bhowmick16quantum} or to \cite{Adam} for a detailed exposition of the subject.

\begin{definition}\label{def:D_isom}
An action $\varphi:O_A\to C(\mathbb G)\otimes O_A$ of a compact quantum group $\mathbb G$ will be called $D$-isometric if
\begin{enumerate}
\item $\varphi$ preserves the KMS state $\tau$ on $O_A$, i.e.\ $(\id \otimes \tau) \varphi = \tau(\cdot)1$;
\item $U_{\varphi}(\Dom (1\otimes D))=\Dom (1\otimes D)$, where $\Dom (1\otimes D)$ denotes the domain of $1 \otimes D$;
\item $U_{\varphi}(1\otimes D)U_{\varphi}^*=1\otimes D$ on $\Dom (1\otimes D)$.
\end{enumerate}
\end{definition}

Let us say that a unital $*$-homomorphism $\pi : C(\QG)\to C(\QH)$ is a \emph{quantum group homomorphism} (from $\QH$ to $\QG$) if $\Phi_{\QH}\circ\pi = (\pi\otimes \pi)\circ\Phi_{\QG}$. Consider now the category $Q_A(D)$ with objects $(\mathbb G,\varphi)$ being $D$-isometric actions $\varphi:O_A\to C(\mathbb G)\otimes O_A$ and where the morphisms $(\mathbb G_2,\varphi_2)\to (\mathbb G_1,\varphi_1)$ are given by compact quantum group homomorphisms $\pi:C(\mathbb G_1) \to C(\mathbb G_2)$ such that the following diagram commutes:
\begin{equation}\label{eq:1}
\begin{tikzcd}
  & \qquad C(\mathbb G_1)\otimes O_A \arrow[dr,"\pi\otimes \id"] \\
O_A \arrow[ur,"\varphi_1"] \arrow[rr,"\varphi_2"] &&  C(\mathbb G_2)\otimes O_A
\end{tikzcd}.
\end{equation}
Our goal is to prove the existence of a final object in $Q_A(D)$ which we shall call  \emph{the (compact) quantum isometry group of the spectral triple $(\LC, L^2(O_A,\tau),D)$}. Once we establish its existence, it will be clear that it is also unique, up to a unique isomorphism in $Q_A(D)$. Note that we do include \emph{compactness} of the quantum isometry group we are considering as a part of the definition, whereas in the case of classical actions considered in \cite{gerontogiannis25heat} it was not assumed a priori, but only followed as a consequence of the computation of the isometry group. As the example related to the Toeplitz algebra studied in \cite[Section 4]{Park95Isometries} shows, in general even a classical isometry group of a noncommutative manifold need not be compact. On the other hand, the very existence of the universal object in the category we consider, suggests that the resulting compact quantum group is a relevant notion to study; one can show that in the example treated in \cite{Park95Isometries} mentioned above, a corresponding universal \emph{compact} isometry group does not exist. Finally, we could also consider more generally the category of all  isometric actions on $O_A$ of \emph{locally compact} quantum groups. Adapting the proofs of the next section to that context would show that the resulting universal object would still be the same; in particular it would be compact. Hence we work in the compact setting from the very beginning.

Let us now record elementary properties of $D$-isometric actions.

\begin{lemma}\label{lem:preserveKMS_2}
For every $(\mathbb G,\varphi)\in Q_A(D)$, 
$$\varphi(C_c^{\infty}(\Gamma_A))\subset \Pol(\mathbb G)\odot C_c^{\infty}(\Gamma_A).$$
 Therefore, the map $(k^{-1}\otimes \id)V_{\varphi}:\rho(C_c^{\infty}(\Gamma_A))\xi\to \Pol(\mathbb G) \odot \rho(C_c^{\infty}(\Gamma_A))\xi$ is well-defined and extends to a right $\Pol(\mathbb G)$-module map $U_{\varphi,k}$ acting on $\Pol(\mathbb G)\odot \rho(C_c^{\infty}(\Gamma_A))\xi$ by $$U_{\varphi,k}(g\otimes \rho(T)\xi):= (k^{-1}\otimes \id) V_{\varphi}(\rho(T)\xi) \cdot g.$$ Moreover, the map $U_{\varphi,k}$ is a restriction of the unitary $U_{\varphi}^*\in B_{\CG}(\CG\otimes L^2(O_A,\tau))$.
\end{lemma}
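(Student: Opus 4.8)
The plan is to extract, via Lemma \ref{lem:preserveKMS_1}, the unitary representation $U_\varphi$ of $\QG$ that commutes with $1\ot D$, and to combine this with the discreteness of the spectrum of $D$. Since $(\LC, L^2(O_A,\tau),D)$ is a spectral triple, $D$ has compact resolvent, so its spectrum is discrete; each eigenspace $E_\lambda\subset L^2(O_A,\tau)$ is finite-dimensional and, by the explicit eigenbasis recalled in Subsection \ref{sec:G_A_MMS}, contained in $\rho(\LC)\xi$. Conditions (2)--(3) of Definition \ref{def:D_isom} say precisely that $U_\varphi$ commutes with $1\ot D$, hence $U_\varphi$ preserves each $\CG\ot E_\lambda$ and restricts there to a finite-dimensional unitary representation $u^{(\lambda)}$ of $\QG$. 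As the matrix coefficients of any finite-dimensional representation lie in $\Pol(\QG)$, this yields $V_\varphi(E_\lambda)\subseteq \Pol(\QG)\odot E_\lambda$ for every eigenvalue $\lambda$.

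Next I would transfer this $L^2$-level statement back to the algebra. Each generator $S_i$ satisfies $\rho(S_i)\xi=\chi_i$, a locally constant compactly supported function, hence a finite linear combination of the eigenbasis $(\mathrm{e}_\gamma,\mathrm{e}_{(\gamma,\nu,j)})$; thus $\chi_i$ lies in the algebraic span $\mathcal{E}:=\operatorname{span}\big(\bigcup_{\lambda}E_{\lambda}\big)=\rho(\LC)\xi$. By the previous step $V_\varphi(\rho(S_i)\xi)\in\Pol(\QG)\odot\mathcal{E}$, say $V_\varphi(\rho(S_i)\xi)=\sum_k a_k\ot\rho(T_k)\xi$ with $a_k\in\Pol(\QG)$ and $T_k\in\LC$. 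On the other hand $V_\varphi(\rho(S_i)\xi)=(\id\ot\rho)(\varphi(S_i))(1\ot\xi)$. Applying slice maps $\omega\ot\id$ for $\omega\in\CG^*$ and using that $\tau$ is faithful, so that $\xi$ is separating for $\rho(O_A)$, one identifies $\varphi(S_i)$ with the algebraic element $\sum_k a_k\ot T_k\in\Pol(\QG)\odot\LC$. Since $\varphi$ is a unital $*$-homomorphism, $\Pol(\QG)\odot\LC$ is a unital $*$-subalgebra of $\CG\ot O_A$, and the $S_i$ generate $\LC$ as a $*$-algebra, this gives $\varphi(\LC)\subseteq\Pol(\QG)\odot\LC$, proving the first assertion.

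For the second assertion, the inclusion just established shows $V_\varphi(\rho(\LC)\xi)\subseteq\Pol(\QG)\odot\rho(\LC)\xi$, so $(k^{-1}\ot\id)V_\varphi$ is defined on this core and $U_{\varphi,k}$ makes sense as a right $\Pol(\QG)$-module map. To identify it with $U_\varphi^*$, I would write $\varphi(T)=\sum_\alpha b_\alpha\ot T_\alpha$ for $T\in\LC$ and compute both $U_\varphi$ and its adjoint on $\Pol(\QG)\odot\rho(\LC)\xi$ using the matrix-coefficient description of the $u^{(\lambda)}$; the appearance of $k^{-1}$ is exactly the standard identity expressing the adjoint of a unitary corepresentation through the antipode and its defining relation. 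A direct comparison then yields $U_{\varphi,k}=U_\varphi^*|_{\Pol(\QG)\odot\rho(\LC)\xi}$.

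I expect the main obstacle to be the transfer step of the second paragraph: upgrading the Hilbert-space containment $V_\varphi(\rho(\LC)\xi)\subseteq\Pol(\QG)\odot\rho(\LC)\xi$ to the algebraic containment $\varphi(\LC)\subseteq\Pol(\QG)\odot\LC$. This rests on two points requiring care: that the eigenbasis of $D$ spans $\LC$, so that $\rho(\LC)\xi$ genuinely equals the algebraic span of the finite-dimensional eigenspaces, and that the comparison $\varphi(S_i)=\sum_k a_k\ot T_k$ is forced, which uses faithfulness of the GNS representation together with the separating property of $\xi$. The antipode bookkeeping in the third paragraph is routine by comparison.
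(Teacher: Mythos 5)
Your first two steps are correct and are essentially the paper's own argument: commutation of $U_{\varphi}$ with $1\otimes D$ forces preservation of each finite-dimensional submodule $\CG\otimes E_{\zeta}$, hence coefficients in $\Pol(\QG)$, and the passage from the $L^{2}$-level statement to $\varphi(\LC)\subset \Pol(\QG)\odot \LC$ rests on cyclicity of $1\otimes \xi$ together with faithfulness of $\id\otimes\tau$, exactly as in the paper (the paper runs this for all Haar wavelets at once rather than for the generators $S_{i}$ followed by multiplicativity, but that difference is immaterial).

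The genuine gap is in your third step, which you declare routine, whereas it is precisely where the last assertion of the lemma has content. The standard identity for a unitary corepresentation $v$ reads $k(v_{ij})=(v_{ji})^{*}$: it involves the antipode $k$, not its inverse. Write $U_{\varphi}(1\otimes e_{\alpha})=\sum_{\beta}u_{\alpha\beta}\otimes e_{\beta}$ in an orthonormal basis of $E_{\zeta}$, so that the $u_{\alpha\beta}$ are exactly the entries to which $k^{-1}$ is applied in $U_{\varphi,k}$. The coaction identity shows that $u=(u_{\alpha\beta})$ is a corepresentation, but it is \emph{not} the unitary one: preservation of the inner product by $V_{\varphi}$ gives $\sum_{\gamma}(u_{\alpha\gamma})^{*}u_{\beta\gamma}=\delta_{\alpha\beta}1$, which says that the conjugate matrix $\overline{u}=((u_{\alpha\beta})^{*})$ is a unitary corepresentation, while $u$ itself is in general only invertible. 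Since the Hilbert-module adjoint is $U_{\varphi}^{*}(1\otimes e_{\alpha})=\sum_{\beta}(u_{\beta\alpha})^{*}\otimes e_{\beta}$, the identity you must prove is $k^{-1}(u_{\alpha\beta})=(u_{\beta\alpha})^{*}$, and it follows from the standard identity applied to $\overline{u}$ combined with $k^{-1}=*\circ k\circ *$ --- not from the standard identity applied to $u$. If you apply it to $u$, as your sketch indicates, you obtain a formula featuring $k$, and your ``direct comparison'' would then require $k=k^{-1}$ on these coefficients, i.e.\ a Kac-type property. That is not available at this stage: the lemma quantifies over arbitrary $(\QG,\varphi)\in Q_{A}(D)$, such a $\QG$ need not be of Kac type, and the fact that $k^{2}=\id$ holds on the relevant coefficients is only a \emph{consequence} of the factorisation theorem that this lemma is used to prove. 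Note that the paper sidesteps the matrix bookkeeping entirely: it checks $U_{\varphi,k}U_{\varphi}=\id$ on the vectors $1\otimes\rho(f)\xi$, $f\in \LC$, by a direct computation combining $(\id\otimes\varphi)\varphi=(\Phi\otimes\id)\varphi$ with the antipode axiom of the opposite Hopf algebra $(\Pol(\QG),\Phi,m^{\op},\varepsilon,k^{-1})$, and then identifies $U_{\varphi,k}$ with $U_{\varphi}^{*}$ using that $U_{\varphi}$ maps $\Pol(\QG)\odot\rho(\LC)\xi$ onto itself (for which one also needs that $U_{\varphi}^{*}$ preserves the eigenspaces with coefficients in $\Pol(\QG)$, a point your outline states only for $U_{\varphi}$). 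Either repair closes the gap; your argument as written does not.
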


\begin{proof}
Let $\spec(D)$ denote the spectrum of $D$, which consists of eigenvalues. For $\zeta \in \spec(D)$ denote by $E_{\zeta}$ the finite dimensional $\zeta$-eigenspace. As explained in Subsection \ref{sec:G_A_MMS}, each space $E_{\zeta}$ has an orthonormal basis $f_{\zeta,1},\ldots,f_{\zeta,\dim E_{\zeta}}$ of Haar wavelets. 

Since $U_{\varphi}$ commutes with $1\otimes D$ we have that $U_{\varphi}$ preserves $\CG \otimes E_{\zeta}$. Hence, the restriction of $U_{\varphi}$ to $\CG \otimes E_{\zeta}$ is a finite dimensional unitary representation of $\mathbb G$ and so, for every Haar wavelet $\rho(f)\xi \in E_{\zeta}$, the element $U_{\varphi}(1\otimes \rho(f)\xi)$ has coefficients in $\Pol(\mathbb G).$ In turn, as $\rho(C_c^{\infty}(\Gamma_A))\xi$ is spanned by the Haar wavelets of all $E_{\zeta}$, we get that for every $f\in C_c^{\infty}(\Gamma_A)$ there are elements $g_1, \ldots, g_n \in \Pol(\mathbb G)$ and $f_1,\ldots, f_n \in C_c^{\infty}(\Gamma_A)$ such that 
\begin{equation}\label{eq:preserveKMS_2_0}
U_{\varphi}(1\otimes \rho(f)\xi)= \sum_{j=1}^ng_{j}\otimes \rho(f_j)\xi.
\end{equation}
Moreover, $U_{\varphi}^*$ commutes with $1\otimes D$, as the latter is self-adjoint. Hence, the analogue of the formula \eqref{eq:preserveKMS_2_0} holds with $U_{\varphi}^*$ in place of $U_{\varphi}$ as well. This means that
 $$U_{\varphi}(\Pol(\mathbb G) \odot \rho(C_c^{\infty}(\Gamma_A))\xi)=\Pol(\mathbb G) \odot \rho(C_c^{\infty}(\Gamma_A))\xi.$$

Since the vector $1\otimes \xi$ is cyclic for $1\otimes \rho: \CG \otimes O_A \to B_{\CG}(C(\mathbb G)\otimes L^2(O_A,\tau))$ and $\tau$ is a faithful state (hence $1\otimes \tau:\CG \otimes O_A\to \CG$ is faithful as well), we obtain that  for $f,g_1, \ldots, g_n$, $f_1, \ldots, f_n$ as above 
\begin{equation}\label{eq:preserveKMS_2_1}
\varphi(f)=\sum_{j=1}^ng_j\otimes f_j,
\end{equation}
and recalling \eqref{eq:preserveKMS_1_0} we see that $U_{\varphi,k}$ is well-defined.  

Now let $f\in C_c^{\infty}(\Gamma_A)$ and we will show that $U_{\varphi,k}U_{\varphi}(1\otimes \rho(f)\xi)=1\otimes \rho(f)\xi$, so that $U_{\varphi}^*$ agrees with $U_{\varphi,k}$ on $\Pol(\mathbb G) \odot \rho(C_c^{\infty}(\Gamma_A))\xi$. Using the notation in \eqref{eq:preserveKMS_2_1} we have

\begin{align*}
U_{\varphi,k}U_{\varphi}(1\otimes \rho(f)\xi)&= \sum_{j=1}^n U_{\varphi,k}(1\otimes \rho(f_j)\xi)\cdot g_j = \sum_{j=1}^n \left[(\id\otimes \rho) (k^{-1}\otimes \id)(\varphi(f_j))(1\otimes \xi) \right] \cdot g_j\\
&=(\id\otimes \rho) \left(\sum_{j=1}^n (k^{-1}\otimes \id)(\varphi(f_j)) (g_j\otimes 1)\right) (1\otimes \xi)\\
&=(\id\otimes \rho)\left((m^{\op}\otimes \id)(\id\otimes k^{-1}\otimes \id)(\id\otimes \varphi)(\varphi(f))\right)(1\otimes \xi)\\
&= (\id\otimes \rho)\left((m^{\op}\otimes \id)(\id\otimes k^{-1}\otimes \id)(\Phi\otimes \id)(\varphi(f))\right)(1\otimes \xi)\\
&=(\id\otimes \rho) \left((\varepsilon 1 \otimes \id)(\varphi (f))\right)(1\otimes \xi) =(\id\otimes \rho)(1\otimes f)(1\otimes \xi).
\end{align*}
\end{proof}

\subsection{Ariadne's thread}

We now want to define the compact quantum groups which will appear as quantum isometry groups of $O_{A}$. To do this, it will prove practical to have a whole family of compact quantum groups indexed by positive integers. Recall that in all this work, $A\in M_{N}(\{0, 1\})$ is a primitive matrix. We first define the $C^*$-algebras associated to our compact quantum groups. We will use the following standard terminology introduced in \cite{banica05quantum}: a matrix $p = (p_{ij})_{1\leqslant i, j\leqslant N}$ with coefficients in a $C^*$-algebra is a \emph{magic unitary} if the coefficients are self-adjoint projections such that their sum in any row or column equals the unit (equivalently $p$ is a unitary matrix whose entries are self-adjoint projections).

\begin{definition}\label{defn: Ariadne_0}
Let $\ell\in \mathbb N \cup \{\infty\}$ and consider the universal $C^*$-algebra $\CA$ generated by partial isometries $u=(u_{\alpha, \beta})_{ \alpha,\beta \in V_A^1}$, with range projections $p=(p_{\alpha, \beta})_{\alpha,\beta \in V_A^1}$ and source projections $q=(q_{\alpha, \beta})_{\alpha,\beta \in V_A^1}$, such that

\begin{enumerate}[(I)]
\item $p,q$ are magic unitaries preserving the right Perron--Frobenius eigenvector; i.e. $p\vec{u}=\vec{u}1=q\vec{u}$;
\item $A$ intertwines $p$ and $q$; i.e. $Ap=qA$;
\item for $n \leq \ell$ and $\alpha, \beta\in V_A^{n}$, the element $u_{\alpha,\beta}:=u_{\alpha_1,\beta_1}\ldots u_{\alpha_n,\beta_n}$ is a partial isometry. The associated range and source projections are denoted by $p_{\alpha,\beta}$ and $q_{\alpha,\beta}$.
\end{enumerate}
\end{definition}
It is clear that for $\ell \leq \ell'$, $C(\QG_{A}^{\ell'})$ is a quotient of $\CA$. Therefore, any algebraic relation that holds in $\CA$ also holds in $C(\QG_{A}^{\ell'})$. Also if $p\vec{u}=\vec{u}1$, property II implies that $q\vec{u}=\vec{u}1$ as well. Later in Section 5 we prove that, in the special case where $A$ is filled with $1$'s, these quantum groups coincide with a class introduced in \cite{Mang2022phd}.

\begin{remark}\label{rem: PFvector}
The second condition in (I) follows from the first (the magic unitarity property of $p$ and $q$) if the right Perron--Frobenius eigenvector is constant, i.e.\ if the graph given by the matrix $A$ is \emph{out-regular}. We do not know whether in general it follows from the other conditions -- see the discussion in Subsection \ref{Sec:classical}.
\end{remark}

\begin{remark}\label{rem: orthogonality}
For the generators of $C(\QG_A^{1})$ we have $$u_{\beta,\alpha}(u_{\delta,\alpha})^*=(u_{\beta,\alpha})^*u_{\delta,\alpha}= u_{\alpha, \beta}(u_{\alpha,\delta})^*= (u_{\alpha, \beta})^*u_{\alpha,\delta}=0,$$ whenever $\alpha, \beta,\delta \in V_A^1$ with $\beta\neq \delta$. This is because every row and column of $p,q$ consists of projections summing to $1$. Such orthogonality conditions associated to longer admissible words $\alpha,\beta,\delta$ hold in $\CA$ for large enough $\ell$, as we will see in Lemma \ref{lem: words_of_s}.
\end{remark}

\begin{remark}\label{rem:distinct}
A priori it is not clear whether $\QG_{A}^{\ell}\neq \QG_{A}^{\ell'}$ for $\ell\neq \ell'$. In Corollary \ref{cor:diff_Gl} we shall see that this is the case for $A=\textbf{1}_N$.
\end{remark}

Our first task is to construct a coproduct $\Phi_{A}$ on the $C^*$-algebra $C(\QG_{A}^{\ell})$, so to endow it with a compact quantum group structure. To do this, we first establish some algebraic relations between the generators. First, one can consider  products of generators for any pair of words on $\{1, \ldots, N\}$, and these products will vanish under certain admissibility conditions.

\begin{lemma}\label{lem: words_of_s_0}
For every $n\in \mathbb N$, $\ell\in \mathbb N \cup \{\infty\}$ and $\alpha',\beta' \in \{1,\ldots, N\}^n$, if $\alpha' \in V_A$ and $\beta' \not\in V_A$, or if $\alpha' \not\in V_A$ and $\beta' \in V_A$, in $\CA$ it holds that $u_{\alpha',\beta'}:=u_{\alpha'_1,\beta'_1}\ldots u_{\alpha'_n,\beta'_n}=0.$
\end{lemma}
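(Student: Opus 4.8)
The plan is to reduce the whole statement to a single relation between two \emph{consecutive} generators, and then to extract that relation from the intertwining condition (II) by a positivity argument. Throughout I use only the magic-unitarity part of (I) and the relation $Ap=qA$ of (II), so the argument is valid in $\CA$ for every $\ell$, matching the statement.

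\emph{Reduction to adjacent pairs.} Since $V_A^1=\{1,\ldots,N\}$, each factor $u_{\alpha'_i,\beta'_i}$ is a generator. Writing $u_{c,d}=p_{c,d}u_{c,d}$ and $u_{a,b}=u_{a,b}q_{a,b}$ gives the factorisation $u_{a,b}u_{c,d}=u_{a,b}\,q_{a,b}p_{c,d}\,u_{c,d}$, so it suffices to prove that $q_{a,b}p_{c,d}=0$ whenever exactly one of $A_{a,c},A_{b,d}$ equals $1$. Granting this, suppose $\alpha'\in V_A$ and $\beta'\notin V_A$. Non-admissibility of $\beta'$ yields an index $i$ with $A_{\beta'_i,\beta'_{i+1}}=0$, while admissibility of $\alpha'$ gives $A_{\alpha'_i,\alpha'_{i+1}}=1$; hence the consecutive factor $u_{\alpha'_i,\beta'_i}u_{\alpha'_{i+1},\beta'_{i+1}}$ vanishes, and so does the entire product $u_{\alpha',\beta'}$. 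The case $\alpha'\notin V_A$, $\beta'\in V_A$ is identical after interchanging the roles of $(a,c)$ and $(b,d)$.

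\emph{The two-letter relation.} Reading off the $(a,d)$ entry of $Ap=qA$ gives
$$\sum_{k\,:\,A_{a,k}=1}p_{k,d}=\sum_{k\,:\,A_{k,d}=1}q_{a,k}.$$
Left-multiplying by $q_{a,b}$ and using the row-orthogonality $q_{a,b}q_{a,k}=\delta_{b,k}q_{a,b}$ collapses the right-hand side to $A_{b,d}q_{a,b}$, so that
$$\sum_{k\,:\,A_{a,k}=1}q_{a,b}p_{k,d}=A_{b,d}\,q_{a,b}.$$
Right-multiplying once more by $q_{a,b}$ rewrites the left-hand side as a sum of the positive elements $q_{a,b}p_{k,d}q_{a,b}=(p_{k,d}q_{a,b})^*(p_{k,d}q_{a,b})$, with right-hand side $A_{b,d}q_{a,b}$. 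When $A_{b,d}=0$ a sum of positive elements vanishes, hence each summand is $0$; if moreover $A_{a,c}=1$, then $c$ is among the indices $k$ and we obtain $p_{c,d}q_{a,b}=0$, i.e.\ $q_{a,b}p_{c,d}=0$. Symmetrically, taking the same entry but right-multiplying by $p_{c,d}$ and using the column-orthogonality $p_{k,d}p_{c,d}=\delta_{k,c}p_{c,d}$ produces $\sum_{k:A_{k,d}=1}q_{a,k}p_{c,d}=A_{a,c}p_{c,d}$; conjugating by $p_{c,d}$ and invoking positivity disposes of the case $A_{a,c}=0$, $A_{b,d}=1$.

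\emph{Main difficulty.} The only non-formal point is the passage from ``the sum vanishes'' to ``each summand vanishes'', which works precisely because conjugating by one of the two projections turns each product $q_{a,b}p_{k,d}$ (or $q_{a,k}p_{c,d}$) into a manifestly positive element. Everything else is magic-unitary orthogonality bookkeeping together with the purely combinatorial observation that admissibility of one word and non-admissibility of the other force a single adjacent position at which the two-letter relation applies.
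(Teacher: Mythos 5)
Your proof is correct and follows essentially the same route as the paper: reduce to a single adjacent pair via $u_{a,b}u_{c,d}=u_{a,b}\,q_{a,b}p_{c,d}\,u_{c,d}$, read off the relevant entry of $Ap=qA$, and kill $q_{a,b}p_{c,d}$ using magic-unitary orthogonality. The only difference is in the last step: the paper multiplies the entry identity by $q_{\alpha_1,\beta_1}$ on the left \emph{and} $p_{\alpha_2,\beta_2}$ on the right simultaneously, so both sums collapse at once to the single identity $A_{\alpha_1,\alpha_2}\,q_{\alpha_1,\beta_1}p_{\alpha_2,\beta_2}=A_{\beta_1,\beta_2}\,q_{\alpha_1,\beta_1}p_{\alpha_2,\beta_2}$, which settles both cases immediately and renders your positivity argument (conjugating by a projection to obtain a vanishing sum of positive elements) unnecessary.
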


\begin{proof}
It suffices to prove it for $\ell=1$, and we claim that it is also enough to prove it for $n=2$. Indeed, the assumption means that there exists $i\in \N$ such that $\alpha'_{i}\alpha'_{i+1}$ is admissible but $\beta'_{i}\beta_{i+1}'$ is not, or the converse. The case $n = 2$ then yields $u_{\alpha'_{i}\beta'_{i}}u_{\alpha'_{i+1}\beta'_{i+1}} = 0$, from which the vanishing of the whole product follows.

From property II in $C(\QG_A^{1})$ we have $$\sum_{\delta} A_{\alpha_1, \delta}p_{\delta, \beta_2}=\sum_{\delta} q_{\alpha_1, \delta} A_{\delta, \beta_2}.$$ Multiplying by $q_{\alpha_1,\beta_1}$ from the left and by $p_{\alpha_2,\beta_2}$ from the right, by Remark \ref{rem: orthogonality} we get $$A_{\alpha_1,\alpha_2}q_{\alpha_1,\beta_1}p_{\alpha_2,\beta_2}=q_{\alpha_1,\beta_1}p_{\alpha_2,\beta_2}A_{\beta_1,\beta_2}.$$
Therefore, if $A_{\alpha_{1},\alpha_{2}} = 1$ and $A_{\beta_{1},\beta_{2}} = 0$, then $q_{\alpha_{1},\beta_{1}}p_{\alpha_{2},\beta_{2}} = 0$ and the result follows from the equality $u_{\alpha_1,\beta_1}u_{\alpha_2,\beta_2}=u_{\alpha_1,\beta_1}q_{\alpha_1,\beta_1}p_{\alpha_2,\beta_2}u_{\alpha_2,\beta_2}$. The other case can be established similarly.
\end{proof}

As a consequence, we obtain more magic unitary matrices given by products of generators. Recall the $\wedge$ notation introduced first in Remark \ref{rem: char_functions2}.

\begin{lemma}\label{lem: words_of_s}
Let $\ell\in \mathbb N \cup \{\infty\}$ and $n\leq \ell$. Then, for $\CA$ we have that
\begin{enumerate}
\item the matrices $(p_{\alpha, \beta})_{\alpha,\beta \in V_A^n}$ and $(q_{\alpha, \beta})_{\alpha,\beta \in V_A^n}$ are magic unitaries;
\item for $\alpha,\alpha',\beta,\beta' \in V_A^n$, if $|\alpha \wedge \alpha'| \neq |\beta \wedge \beta'|$ then $p_{\alpha,\beta}p_{\alpha',\beta'}=q_{\alpha,\beta}q_{\alpha',\beta'}=0$. 
\end{enumerate}
\end{lemma}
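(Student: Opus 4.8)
The plan is to prove both assertions by induction on $n$, treating the range projections $p$ first and then obtaining $q$ by the symmetric (mirror) argument. Throughout I will use the two factorisation identities
\begin{equation*}
p_{\alpha,\beta}=u_{\alpha_1,\beta_1}\,p_{\alpha',\beta'}\,u_{\alpha_1,\beta_1}^{*},\qquad q_{\alpha,\beta}=u_{\alpha_n,\beta_n}^{*}\,q_{\alpha'',\beta''}\,u_{\alpha_n,\beta_n},
\end{equation*}
where $\alpha'=\alpha_2\cdots\alpha_n$, $\beta'=\beta_2\cdots\beta_n$ (peeling off the first letter) and $\alpha''=\alpha_1\cdots\alpha_{n-1}$, $\beta''=\beta_1\cdots\beta_{n-1}$ (peeling off the last letter). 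Both follow immediately from $u_{\alpha,\beta}=u_{\alpha_1,\beta_1}u_{\alpha',\beta'}$ and from expanding $u_{\alpha,\beta}^{*}u_{\alpha,\beta}$, and condition (III) guarantees that every $p_{\alpha,\beta}$ and $q_{\alpha,\beta}$ is a genuine projection. Hence for part (1) only the row and column sums remain to be checked.

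For part (1), fix $\alpha\in V_A^n$ and compute the row sum of $p$. The key device is to replace the sum over $\beta\in V_A^n$ by the sum over all words $\beta\in\{1,\dots,N\}^n$: by Lemma \ref{lem: words_of_s_0} the extra terms vanish (since $\alpha\in V_A$), and this decouples the admissibility constraints between consecutive letters of $\beta$. Then
\begin{equation*}
\sum_{\beta\in V_A^n}p_{\alpha,\beta}=\sum_{\beta_1}u_{\alpha_1,\beta_1}\Big(\sum_{\beta'\in\{1,\dots,N\}^{n-1}}p_{\alpha',\beta'}\Big)u_{\alpha_1,\beta_1}^{*}=\sum_{\beta_1}u_{\alpha_1,\beta_1}u_{\alpha_1,\beta_1}^{*}=1,
\end{equation*}
where the inner sum equals $1$ by the induction hypothesis (again extended to all words via Lemma \ref{lem: words_of_s_0}) and the last equality is the $n=1$ row sum from (I). The column sum of $p$ is identical, summing over $\alpha_1$ with $\beta$ fixed, and the two magic-unitary identities for $q$ follow verbatim by peeling off the last letter instead of the first.

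For part (2), take the range projections and assume without loss of generality that $|\alpha\wedge\alpha'|>|\beta\wedge\beta'|=:k$ (the reverse inequality is handled by exchanging rows and columns). Put $m=k+1\le n$ and let $\tilde\alpha,\tilde\beta$ be the length-$m$ prefixes of $\alpha,\beta$. By the choice of $k$ we have $\tilde\alpha=\tilde\alpha'$ while $\tilde\beta\neq\tilde\beta'$, since $\beta,\beta'$ first differ in position $m$. Iterating the first factorisation identity $m$ times gives $p_{\alpha,\beta}=u_{\tilde\alpha,\tilde\beta}\,p_{\alpha^{>m},\beta^{>m}}\,u_{\tilde\alpha,\tilde\beta}^{*}\le u_{\tilde\alpha,\tilde\beta}u_{\tilde\alpha,\tilde\beta}^{*}=p_{\tilde\alpha,\tilde\beta}$, using $u\,e\,u^{*}\le uu^{*}$ for any projection $e\le 1$; likewise $p_{\alpha',\beta'}\le p_{\tilde\alpha,\tilde\beta'}$. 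By part (1) the matrix $(p_{\mu,\nu})_{\mu,\nu\in V_A^m}$ is a magic unitary, so $p_{\tilde\alpha,\tilde\beta}$ and $p_{\tilde\alpha,\tilde\beta'}$ lie in the same row but different columns, hence are orthogonal; as $p_{\alpha,\beta}$ and $p_{\alpha',\beta'}$ are dominated by orthogonal projections, their product is $0$.

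The source projections are treated by the mirror of this argument: peeling off the last letter yields $q_{\alpha,\beta}\le q_{\hat\alpha,\hat\beta}$ for the length-$m$ \emph{suffixes} $\hat\alpha,\hat\beta$, so the same domination-plus-orthogonality scheme applies with the branching read from the right-hand end of the words. Equivalently, the $q$-statement can be deduced from the $p$-statement by transporting along the word-reversal correspondence $u_{i,j}\mapsto u_{i,j}^{*}$, which intertwines $C(\QG_A^{\ell})$ with $C(\QG_{A^{T}}^{\ell})$ and exchanges $p\leftrightarrow q$ together with prefixes and suffixes. I expect the only delicate point to be precisely this bookkeeping of \emph{which} end of the words governs each case: range projections nest toward prefixes while source projections nest toward suffixes, so for the $q$-part the common-prefix condition must be read on the flipped words $\overline\alpha,\overline\beta$ in the sense of Remark \ref{rem: char_functions2}.
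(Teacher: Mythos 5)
Your treatment of part (1) and of the \emph{range}-projection half of part (2) is correct, and it is essentially the paper's own argument with the details supplied: the paper likewise replaces sums over $V_A^n$ by sums over all of $\{1,\dots,N\}^n$ via Lemma \ref{lem: words_of_s_0} and then appeals to magic unitarity (your telescoping induction is what that one line compresses), and for part (2) it merely cites the orthogonality relations \eqref{eq:orthogonality2}; your prefix-domination argument ($p_{\alpha,\beta}\le p_{\tilde\alpha,\tilde\beta}$ combined with magic unitarity at level $m=\min\{|\alpha\wedge\alpha'|,|\beta\wedge\beta'|\}+1\le\ell$) is a correct implementation of this for the $p$'s.

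The \emph{source}-projection half is where you deviate, and the ``delicate point'' you flag at the end is not mere bookkeeping: your mirror argument proves $q_{\alpha,\beta}q_{\alpha',\beta'}=0$ when the common \emph{suffix} lengths differ, i.e.\ when $|\overline{\alpha}\wedge\overline{\alpha'}|\neq|\overline{\beta}\wedge\overline{\beta'}|$ in the notation of Remark \ref{rem: char_functions2}, and not under the common-prefix hypothesis actually written in the lemma. No argument could close this gap, because the $q$-statement as literally written is false. Indeed, take $N=2$, $A=\mathbf{1}_2$ (so all words are admissible, $\vec{u}$ is constant, and relation (II) of Definition \ref{defn: Ariadne_0} is automatic). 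On $\mathbb{C}^4$ with orthonormal basis $e_1,\dots,e_4$ let $Q$ and $P$ be the orthogonal projections onto $\operatorname{span}(e_1,e_2)$ and $\operatorname{span}(e_1,e_3)$, and define partial isometries
\begin{gather*}
u_{1,1}:e_1\mapsto e_1,\ e_2\mapsto e_3,\qquad u_{2,2}:e_1\mapsto e_3,\ e_2\mapsto e_1,\\
u_{1,2}:e_3\mapsto e_2,\ e_4\mapsto e_4,\qquad u_{2,1}:e_3\mapsto e_4,\ e_4\mapsto e_2,
\end{gather*}
each vanishing on the complement of the indicated vectors. Then $p_{1,1}=p_{2,2}=P$, $p_{1,2}=p_{2,1}=1-P$, $q_{1,1}=q_{2,2}=Q$, $q_{1,2}=q_{2,1}=1-Q$, so relations (I) and (II) hold; and since every generator is a partial permutation of the basis, so is every product of generators, hence every such product is a partial isometry and (III) holds for all $\ell$. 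This is therefore a representation of $C(\QG_{\mathbf{1}_2}^{\ell})$. Now take $\alpha=12$, $\alpha'=11$, $\beta=21$, $\beta'=12$, so that $|\alpha\wedge\alpha'|=1\neq 0=|\beta\wedge\beta'|$. One computes that $u_{\alpha,\beta}=u_{1,2}u_{2,1}$ is the partial isometry $e_3\mapsto e_4$ and $u_{\alpha',\beta'}=u_{1,1}u_{1,2}$ is the partial isometry $e_3\mapsto e_3$, so $q_{\alpha,\beta}=q_{\alpha',\beta'}$ is the rank-one projection onto $\mathbb{C}e_3$ and $q_{\alpha,\beta}q_{\alpha',\beta'}\neq 0$. (Consistently, $p_{\alpha,\beta}p_{\alpha',\beta'}=0$, and both pairs here have common suffix length $0$, so your suffix version is not contradicted.)

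So your proposal should be read as proving the \emph{corrected} statement: prefixes govern the $p$'s and suffixes govern the $q$'s. This is also what the paper actually needs: the genuinely prefix-based applications of part (2) (in Proposition \ref{prop:G_A_isom} and Theorem \ref{thm:factoring}) all concern range projections or products $u^*u'$, while the $q$-half is only ever used through suffix-matching reductions or same-row/same-column orthogonality at word level, where your version applies. One final caveat: your alternative derivation of the $q$-half via the correspondence $u_{i,j}\mapsto u_{i,j}^*$ with $C(\QG_{A^{T}}^{\ell})$ does not work as stated, since relation (I) for $A^{T}$ involves the Perron--Frobenius eigenvector of $A^{T}$ rather than that of $A$, and relation (II) does not transform into $A^{T}q=pA^{T}$; but your direct suffix-peeling argument inside $\CA$ needs none of this, so this remark is harmless.
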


\begin{proof}
For part (1), pick $\alpha \in V_A^n$. Then, from Lemma \ref{lem: words_of_s_0} and the fact that the matrix $p$  is a magic unitary we have 
\begin{align*}
\sum_{\beta\in V_A^n} p_{\alpha,\beta}&=\sum_{\beta \in \{1,\ldots, N\}^n} p_{\alpha,\beta}=1,\\
\sum_{\beta\in V_A^n} p_{\beta,\alpha}&=\sum_{\beta \in \{1,\ldots, N\}^n} p_{\beta,\alpha}=1.
\end{align*}
Similarly, as $q$ is a magic unitary, $$\sum_{\beta\in V_A^n} q_{\alpha,\beta}=\sum_{\beta\in V_A^n} q_{\beta,\alpha}=1,$$
proving the first point. As for the second point, observe that the orthogonality relations for coefficients of a magic unitary imply
\begin{equation}\label{eq:orthogonality2}
u_{\beta,\alpha}(u_{\delta,\alpha})^*=(u_{\beta,\alpha})^*u_{\delta,\alpha}= u_{\alpha, \beta}(u_{\alpha,\delta})^*= (u_{\alpha, \beta})^*u_{\alpha,\delta}=0,
\end{equation}
from which the result follows.
\end{proof}

With this in hand, we can provide a natural compact quantum group structure on each $\CA$.

\begin{prop}\label{prop:Qgroup}
Let $\ell\in \mathbb N\cup \{\infty\}$. Then, the unital map $\Phi_A: \CA \to \CA \otimes \CA$ given on generators by $$\Phi_A(u_{\alpha, \beta}) := \sum_{\gamma=1}^{N} u_{\alpha, \gamma} \otimes u_{\gamma, \beta}$$ extends to a $*$-homomorphism, turning $(\CA, \Phi_A)$ into a compact quantum group of Kac type. 
\end{prop}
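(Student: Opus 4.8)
The plan is to verify the three defining requirements of a compact quantum group for the pair $(\CA, \Phi_A)$: that $\Phi_A$ extends to a well-defined unital $*$-homomorphism, that it is coassociative, and that it satisfies the two density (cancellation) conditions. The Kac property will then follow from the magic-unitarity of $p$ and $q$, which forces the antipode to be involutive.

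First I would establish that $\Phi_A$ is a well-defined $*$-homomorphism using the universal property of $\CA$. Since $\CA$ is a universal $C^*$-algebra, it suffices to check that the elements $\widetilde{u}_{\alpha,\beta} := \sum_{\gamma=1}^N u_{\alpha,\gamma} \otimes u_{\gamma,\beta}$ in $\CA \otimes \CA$ satisfy relations (I), (II) and (III) of Definition \ref{defn: Ariadne_0}. For (I), I would compute the range and source projections of $\widetilde{u}_{\alpha,\beta}$: using the orthogonality relations of Remark \ref{rem: orthogonality} (namely $u_{\alpha,\gamma}(u_{\alpha,\gamma'})^* = 0$ for $\gamma \neq \gamma'$), the range projection of $\widetilde{u}_{\alpha,\beta}$ computes to $\sum_{\gamma} p_{\alpha,\gamma} \otimes p_{\gamma,\beta}$, which is precisely $(\Phi \otimes \Phi)$ applied to $p_{\alpha,\beta}$ in the usual magic-unitary fashion; one checks this is again a magic unitary and that the Perron--Frobenius eigenvector relation $p\vec{u}=\vec{u}1$ is preserved, since $\sum_\beta (\sum_\gamma p_{\alpha,\gamma}\otimes p_{\gamma,\beta}) u_\beta = \sum_\gamma p_{\alpha,\gamma} \otimes (\sum_\beta p_{\gamma,\beta} u_\beta) = \sum_\gamma p_{\alpha,\gamma} \otimes u_\gamma 1 = u_\alpha 1 \otimes 1$. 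Condition (II), $A\widetilde{p} = \widetilde{q}A$, follows by the analogous entrywise manipulation using $Ap=qA$ and $Aq = \dots$ twice. For (III), the key observation is that $\Phi_A(u_{\alpha,\beta}) = \sum_\gamma u_{\alpha,\gamma}\otimes u_{\gamma,\beta}$ extended multiplicatively to words gives $\Phi_A(u_{\alpha,\beta}) = \sum_{\gamma \in V_A^n} u_{\alpha,\gamma}\otimes u_{\gamma,\beta}$ for $\alpha,\beta \in V_A^n$ with $n \le \ell$, where the sum collapses to admissible intermediate words $\gamma$ by Lemma \ref{lem: words_of_s_0}; each summand is a tensor product of partial isometries with mutually orthogonal ranges and sources (by part (2) of Lemma \ref{lem: words_of_s}), hence the sum is itself a partial isometry. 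This is the step I expect to be the main obstacle, as it requires combining Lemma \ref{lem: words_of_s_0} and Lemma \ref{lem: words_of_s} carefully to control the interaction of the tensor factors and to confirm that the partial-isometry relation survives the comultiplication.

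Coassociativity, $(\Phi_A \otimes \id)\circ \Phi_A = (\id \otimes \Phi_A)\circ \Phi_A$, is a routine verification on generators: both sides send $u_{\alpha,\beta}$ to $\sum_{\gamma,\delta} u_{\alpha,\gamma}\otimes u_{\gamma,\delta}\otimes u_{\delta,\beta}$, and since the generators generate $\CA$ and both maps are $*$-homomorphisms, the identity extends. For the density conditions, I would argue as is standard for magic-unitary-type quantum groups: the antipode $k$ defined on generators by $k(u_{\alpha,\beta}) := u_{\beta,\alpha}^*$ is the natural candidate, and one verifies the antipode axioms $m(k\otimes \id)\Phi_A = \varepsilon(\cdot)1 = m(\id\otimes k)\Phi_A$ on generators using the magic-unitarity of $p$ and $q$ (so that $\sum_\gamma u_{\gamma,\beta}^* u_{\gamma,\alpha} = \delta_{\alpha,\beta} q_{\alpha,\beta}$-type relations hold), together with the counit $\varepsilon(u_{\alpha,\beta}) = \delta_{\alpha,\beta}$. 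The existence of this antipode on the dense Hopf $*$-algebra $\Pol(\QG_A^\ell)$ generated by the $u_{\alpha,\beta}$ yields the cancellation/density conditions via the standard Hopf-algebraic criterion for compact quantum groups. Finally, because $k(u_{\alpha,\beta}) = u_{\beta,\alpha}^*$ satisfies $k^2 = \id$ on generators (as $k^2(u_{\alpha,\beta}) = k(u_{\beta,\alpha}^*) = (u_{\alpha,\beta}^*)^* = u_{\alpha,\beta}$), the antipode is involutive, and hence $\QG_A^\ell$ is of Kac type, completing the proof.
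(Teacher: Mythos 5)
Your proposal is correct in outline, and its first half is essentially the paper's argument: the paper also forms $U_{\alpha,\beta}=\sum_{\gamma}u_{\alpha,\gamma}\otimes u_{\gamma,\beta}$, verifies relations (I)--(III) of Definition \ref{defn: Ariadne_0} for this matrix via Lemma \ref{lem: words_of_s_0} and the orthogonality relations of Lemma \ref{lem: words_of_s} (your "sum of partial isometries with orthogonal ranges and sources" argument for (III) is exactly the intended one, though the relevant relations are those of \eqref{eq:orthogonality2} rather than part (2) of that lemma per se), and then invokes the universal property to get $\Phi_A$. Where you genuinely diverge is the passage from "$\Phi_A$ is a $*$-homomorphism of corepresentation form" to "compact quantum group of Kac type". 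The paper observes that the defining relations force both $u$ and $\overline{u}$ to be unitary over $\CA$ and then simply cites \cite[Prop 1.1.4]{neshveyev13compact}, which delivers the density conditions in one stroke, with Kac type coming from the unitarity of both $u$ and $\overline{u}$. You instead rebuild the Hopf-algebraic structure by hand (counit, antipode, antipode axioms, then the standard cancellation argument $a\otimes b=\sum (a\,k(b_{(1)})\otimes 1)\Phi_A(b_{(2)})$). This is a legitimate alternative -- it is in effect a re-proof of the cited criterion in this special case, and it has the merit of making the antipode explicit -- but it is longer and, as written, leaves two nontrivial points unaddressed which the paper's shortcut avoids entirely.

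First, the antipode is an anti-homomorphism on a \emph{universal} algebra given by relations, so "defining $k$ on generators by $k(u_{\alpha,\beta})=u_{\beta,\alpha}^*$" requires checking that the matrix $(u_{\beta,\alpha}^*)_{\alpha,\beta}$ satisfies relations (I)--(III) in the opposite algebra $\CA^{\mathrm{op}}$. Its range and source projections there are $p^{t}$ and $q^{t}$, so in particular one must derive $Ap^{t}=q^{t}A$ from $Ap=qA$; this is not formal, but follows by multiplying $Ap=qA$ by $q^{t}$ on the left and $p^{t}$ on the right and using that magic unitaries are unitaries with inverse equal to transpose (the eigenvector condition for $p^{t},q^{t}$ also needs the observation that $p\vec{u}=\vec{u}1$ is equivalent to $p_{\alpha,\gamma}(u_\gamma-u_\alpha)=0$ for all $\alpha,\gamma$, a condition symmetric under transposition). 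Second, your Kac computation $k^{2}(u_{\alpha,\beta})=k(u_{\beta,\alpha}^*)=(u_{\alpha,\beta}^*)^*$ silently uses $k(x^*)=k(x)^*$, which for a Hopf $*$-algebra is \emph{equivalent} to $k^{2}=\id$, i.e.\ to the Kac property you are trying to prove. The circularity disappears once $k$ is constructed as a $*$-homomorphism into $\CA^{\mathrm{op}}$ (so that $k(u_{\beta,\alpha}^*)=u_{\alpha,\beta}$ holds by construction), or, as in the paper, once one knows both $u$ and $\overline{u}$ are unitary corepresentations, whence $k(u_{\alpha,\beta})=u_{\beta,\alpha}^*$ and $k(u_{\alpha,\beta}^*)=u_{\beta,\alpha}$ follow from the general formula for the antipode on coefficients of unitary representations. (A last cosmetic point: your verification of (II) for $\widetilde{u}$ needs only $Ap=qA$ applied once in each tensor leg; there is no separate "$Aq$" relation to invoke.)
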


\begin{proof}
First observe that by the defining relations, the matrix $u=(u_{\alpha, \beta})_{ \alpha,\beta \in V_A^1}$ over $\CA$ and its conjugate $\overline{u}:=((u_{\alpha, \beta})^*)_{\alpha,\beta\in V_A^1}$ are unitaries. Moreover, set
\begin{equation*}
U_{\alpha, \beta} := \sum_{\gamma\in V_{A}^1}u_{\alpha,\gamma}\otimes u_{\gamma,\beta}.
\end{equation*}
Then, a direct computation shows that it also satisfies property (II) of Definition \ref{defn: Ariadne_0}. Also, Lemma \ref{lem: words_of_s} for $n=2$, it shows that $U$ satisfies property (I). Property (III) follows from the orthogonality relations given by Lemma \ref{lem: words_of_s}. The universal property of $C(\QG_{A}^{\ell})$ therefore ensures the existence of $\Phi_{A}$. As the coefficients of $u$ generate the $C^*$-algebra, we can conclude by \cite[Prop 1.1.4]{neshveyev13compact}.
\end{proof}

\begin{definition}
Given $\ell \in \mathbb N\cup \{\infty\}$, we will refer to $\GA$ as the $\ell$-\textit{Ariadne quantum group}. The multiplication, counit and antipode on the Hopf $*$-algebra $\Pol (\GA)$ will be denoted by $m_A,\varepsilon_A, k_A$ as soon as they are clear from the context. Also, the last of these maps is given by $k_A(u_{\alpha,\beta})=(u_{\beta,\alpha})^*.$ 
\end{definition}

\section{The main result}\label{sec:main}

In this section we will prove the main result of our work, which is Theorem \ref{thm:factoring}, saying that $\QG_{A}^{\infty}$ is the universal compact quantum group acting $D$-isometrically on $O_{A}$. The proof naturally splits into two parts: first, we show that $\QG_{A}^{\infty}$ acts $D$-isometrically on $O_{A}$, and second we prove that any $D$-isometric action factors through this action.

\subsection{Isometry of the action}

We start by defining a natural action. To prove that it is $D$-isometric, we will need to find a unitary implementation, which will be automatic if the KMS state is preserved. We will therefore also prove that fact at the same time. We begin with a direct consequence of property I.

\begin{prop}\label{prop:equal_volume}
In $\CAi$, for every $n\in \mathbb N$ and $\alpha,\beta \in \{1,\ldots, N\}^n$, if $u_{\alpha,\beta}\neq 0$ then $\tau(S_{\alpha}S_{\alpha}^*)=\tau(S_{\beta}S_{\beta}^*).$ 
\end{prop}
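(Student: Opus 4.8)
The plan is to reduce the statement to a purely combinatorial identity about the Perron--Frobenius weights and then to derive that identity from property (I) by an induction on the word length.

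First I would translate $\tau(S_\alpha S_\alpha^*)$ into a weight. By the identification of $\overline{\mathrm{span}}\{S_\gamma S_\gamma^*\}$ with $C(\Sigma_A)$ together with \eqref{eq:KMS} and the cylinder formula $\mu(C(\gamma)) = u_{\gamma_{|\gamma|}}/\lambda_{\max}^{|\gamma|-1}$, one has for admissible $\gamma \in V_A^n$ that $\tau(S_\gamma S_\gamma^*) = \mu(C(\gamma)) = u_{\gamma_n}/\lambda_{\max}^{n-1}$, while $S_\gamma = 0$ (hence $\tau(S_\gamma S_\gamma^*) = 0$) whenever $\gamma$ is not admissible. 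Consequently, for $\alpha,\beta \in \{1,\dots,N\}^n$ of equal length the desired equality $\tau(S_\alpha S_\alpha^*) = \tau(S_\beta S_\beta^*)$ amounts to $u_{\alpha_n} = u_{\beta_n}$ once both words are admissible, and is the trivial $0=0$ when both are inadmissible. The mixed case cannot occur under the hypothesis $u_{\alpha,\beta} \neq 0$: by Lemma~\ref{lem: words_of_s_0}, if exactly one of $\alpha,\beta$ is admissible then $u_{\alpha,\beta} = 0$. Thus the whole proposition reduces to the claim that for admissible $\alpha,\beta \in V_A^n$, if the range projection $p_{\alpha,\beta} = u_{\alpha,\beta}u_{\alpha,\beta}^*$ is non-zero, then $u_{\alpha_n} = u_{\beta_n}$.

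The heart of the argument is the identity
\begin{equation}\label{eq:weightpreserve}
\sum_{\beta \in V_A^n} p_{\alpha,\beta}\, u_{\beta_n} = u_{\alpha_n}, \qquad \alpha \in V_A^n,
\end{equation}
which I would prove by induction on $n$. For $n=1$ this is exactly property (I), since $V_A^1 = \{1,\dots,N\}$ and $(p\vec u)_\alpha = u_\alpha$. For the inductive step, write $\alpha = \alpha_1 \tilde\alpha$ with $\tilde\alpha = \alpha_2\cdots\alpha_n \in V_A^{n-1}$, and factor each admissible $\beta = \beta_1\tilde\beta$; from $u_{\alpha,\beta} = u_{\alpha_1,\beta_1} u_{\tilde\alpha,\tilde\beta}$ one gets $p_{\alpha,\beta} = u_{\alpha_1,\beta_1}\, p_{\tilde\alpha,\tilde\beta}\, u_{\alpha_1,\beta_1}^*$. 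The key point is that in the resulting double sum the admissibility constraint $A_{\beta_1,\tilde\beta_1}=1$ may be dropped: since $\alpha_1\alpha_2$ is admissible and $p_{\tilde\alpha,\tilde\beta}$ begins with the factor $u_{\alpha_2,\tilde\beta_1}$, Lemma~\ref{lem: words_of_s_0} forces $u_{\alpha_1,\beta_1} u_{\alpha_2,\tilde\beta_1} = 0$ whenever $A_{\beta_1,\tilde\beta_1}=0$, so each such extra term is annihilated by the left factor $u_{\alpha_1,\beta_1}$. Hence, using that $u_{\beta_n} = u_{\tilde\beta_{n-1}}$ is a scalar and applying the induction hypothesis to $\tilde\alpha$,
\begin{align*}
\sum_{\beta \in V_A^n} p_{\alpha,\beta}\, u_{\beta_n}
&= \sum_{\beta_1=1}^N u_{\alpha_1,\beta_1}\Bigl(\sum_{\tilde\beta \in V_A^{n-1}} p_{\tilde\alpha,\tilde\beta}\, u_{\tilde\beta_{n-1}}\Bigr) u_{\alpha_1,\beta_1}^* \\
&= u_{\alpha_n} \sum_{\beta_1=1}^N u_{\alpha_1,\beta_1} u_{\alpha_1,\beta_1}^* = u_{\alpha_n} \sum_{\beta_1=1}^N p_{\alpha_1,\beta_1} = u_{\alpha_n},
\end{align*}
the last equality being the row-sum relation for the magic unitary $p$.

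Finally I would conclude from \eqref{eq:weightpreserve} exactly as in the single-letter case: by Lemma~\ref{lem: words_of_s} the matrix $(p_{\alpha,\beta})_{\alpha,\beta\in V_A^n}$ is a magic unitary, so for fixed $\alpha$ the projections $\{p_{\alpha,\beta}\}_\beta$ are pairwise orthogonal with sum $1$. Subtracting $u_{\alpha_n}\sum_\beta p_{\alpha,\beta} = u_{\alpha_n}$ from \eqref{eq:weightpreserve} gives $\sum_{\beta\in V_A^n} p_{\alpha,\beta}(u_{\beta_n}-u_{\alpha_n})=0$; multiplying on the left by a fixed $p_{\alpha,\beta}$ and using orthogonality leaves $p_{\alpha,\beta}(u_{\beta_n}-u_{\alpha_n})=0$, so the scalar $u_{\beta_n}-u_{\alpha_n}$ must vanish whenever $p_{\alpha,\beta}\neq 0$. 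Since $p_{\alpha,\beta}\neq 0$ is equivalent to $u_{\alpha,\beta}\neq 0$, this establishes the proposition. The only genuinely delicate step is \eqref{eq:weightpreserve}, and within it the observation that admissibility of $\alpha$ lets one enlarge the inner summation to all of $V_A^{n-1}$ so as to invoke the induction hypothesis; everything else is bookkeeping with magic unitaries.
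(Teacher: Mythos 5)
Your proof is correct, but it takes a substantially longer route than the paper's. The paper's argument is a two-line observation: since for admissible words $\tau(S_{\alpha}S_{\alpha}^*)=u_{\alpha_n}/\lambda_{\max}^{n-1}$ depends only on the length and the \emph{last} letter, one never needs any level-$n$ structure; the hypothesis $u_{\alpha,\beta}=u_{\alpha_1,\beta_1}\cdots u_{\alpha_n,\beta_n}\neq 0$ forces the last factor $u_{\alpha_n,\beta_n}$, hence $p_{\alpha_n,\beta_n}$, to be non-zero, and then property (I) for the single-letter magic unitary $p$ (row projections mutually orthogonal, summing to $1$, and satisfying $p\vec{u}=\vec{u}1$) immediately gives $u_{\alpha_n}=u_{\beta_n}$. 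You instead prove the stronger identity $\sum_{\beta\in V_A^n}p_{\alpha,\beta}\,u_{\beta_n}=u_{\alpha_n}$ --- a level-$n$ analogue of property (I) --- by induction on $n$, which requires the constraint-dropping trick via Lemma \ref{lem: words_of_s_0} and the level-$n$ magic unitarity of Lemma \ref{lem: words_of_s}, and then you conclude by the same row-orthogonality argument. What your approach buys is a reusable statement of independent interest (the level-$n$ range-projection matrix also fixes the vector of Perron--Frobenius weights indexed by last letters), plus an explicit treatment of the inadmissible-word cases, which the paper's proof leaves implicit; what it costs is an induction that the problem does not actually require, since the quantity being compared is insensitive to all letters but the last.
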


\begin{proof}
Consider a non-zero coefficient $u_{\alpha,\beta}$. In particular, $u_{\alpha_{|\alpha|},\beta_{|\beta|}}\neq 0$, or equivalently $p_{\alpha_{|\alpha|},\beta_{|\beta|}}\neq 0$. By property I, the associated entries of the Perron--Frobenius eigenvector $\vec{u}$ are equal, namely $u_{\alpha_{|\alpha|}}=u_{\beta_{|\beta|}}$. Since $\alpha,\beta$ have the same length we then get $\tau(S_{\alpha}S_{\alpha}^*)=\tau(S_{\beta}S_{\beta}^*).$
\end{proof}

\begin{prop}\label{prop:actions}
For every $\ell\in \mathbb N \cup \{\infty\}$, the compact quantum group $\GA$ acts on $O_A$ via the $*$-homomorphism $\varphi_A:O_A\to \CA \otimes O_A$, defined on generators as 
$$\varphi_A(S_{\alpha}):=\sum_{\alpha'=1}^N u_{\alpha,\alpha'} \otimes S_{\alpha'}, \;\;\; \alpha =1, \ldots, N.$$ 
Moreover, for $\ell=\infty$ the action preserves the KMS state $\tau$.
\end{prop}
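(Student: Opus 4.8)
The plan is to verify the two claims of Proposition \ref{prop:actions} in turn: first that $\varphi_A$ is a well-defined action for every $\ell$, and second that for $\ell=\infty$ it preserves $\tau$. For the first claim, I would begin by checking that the formula $\varphi_A(S_\alpha):=\sum_{\alpha'} u_{\alpha,\alpha'}\otimes S_{\alpha'}$ respects the Cuntz--Krieger relations \eqref{eq: CK-relations}, so that the universal property of $O_A$ produces a genuine $*$-homomorphism. Concretely, one computes $\varphi_A(S_i)^*\varphi_A(S_k)=\sum_{i',k'}(u_{i,i'})^*u_{k,k'}\otimes S_{i'}^*S_{k'}$ and uses the orthogonality relations of Remark \ref{rem: orthogonality} (coming from the magic unitarity of $p$ and $q$) to collapse the sum; the relation $Ap=qA$ from property (II) is what converts $\sum_{j}A_{ij}S_jS_j^*$ correctly, so the $S_i^*S_k=\delta_{ik}\sum_j A_{ij}S_jS_j^*$ relation is preserved. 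Similarly $\sum_i\varphi_A(S_i)\varphi_A(S_i)^*=1\otimes 1$ follows from the rows of $p$ summing to $1$. After this, the coaction identity $(\Phi_A\otimes\id)\circ\varphi_A=(\id\otimes\varphi_A)\circ\varphi_A$ is a direct matrix-coefficient computation using the definition of $\Phi_A$ in Proposition \ref{prop:Qgroup}, and the Podleś density condition follows because the coefficients $u_{\alpha,\beta}$ generate $\CA$ and $u$ is unitary.

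For the state preservation, the goal is to show $(\id\otimes\tau)(\varphi_A(T))=\tau(T)1$ for all $T\in O_A$. By linearity, continuity and the fact that $O_A$ is densely spanned by the monomials $S_\alpha S_\beta^*$, it suffices to check this on such monomials. I would first treat the case $\alpha=\beta$ (the diagonal monomials $S_\alpha S_\alpha^*$, which span the copy of $C(\Sigma_A)$): here $\varphi_A(S_\alpha S_\alpha^*)=\sum_{\alpha',\beta'} u_{\alpha,\alpha'}(u_{\alpha,\beta'})^*\otimes S_{\alpha'}S_{\beta'}^*$, and applying $\id\otimes\tau$ together with the KMS formula \eqref{tauKMS} — which forces $\alpha'=\beta'$ — reduces the sum to $\sum_{\alpha'}u_{\alpha,\alpha'}p_{\alpha,\alpha'}\tau(S_{\alpha'}S_{\alpha'}^*)$ or an analogous expression in the range projections. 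The crucial input is Proposition \ref{prop:equal_volume}: whenever $u_{\alpha,\alpha'}\neq 0$ one has $\tau(S_{\alpha'}S_{\alpha'}^*)=\tau(S_\alpha S_\alpha^*)$, so the trace factor can be pulled out of the sum, leaving $\tau(S_\alpha S_\alpha^*)\sum_{\alpha'}p_{\alpha,\alpha'}=\tau(S_\alpha S_\alpha^*)1$ by magic unitarity of $p$. For the off-diagonal monomials with $\alpha\neq\beta$ one has $\tau(S_\alpha S_\beta^*)=0$, and I would check that $(\id\otimes\tau)(\varphi_A(S_\alpha S_\beta^*))$ likewise vanishes: the KMS condition again kills all cross terms $S_{\alpha'}S_{\beta'}^*$ with $\alpha'\neq\beta'$, and the surviving diagonal terms must be shown to cancel or to be absent, using the word-length and orthogonality relations of Lemma \ref{lem: words_of_s}.

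I expect the main obstacle to lie precisely in the $\ell=\infty$ dependence of the state-preservation argument, and in handling monomials $S_\alpha S_\beta^*$ with $\alpha,\beta\in V_A^n$ of arbitrary length $n$. The reason $\ell=\infty$ is needed is that one must invoke the orthogonality relations $p_{\alpha,\beta}p_{\alpha',\beta'}=0$ when $|\alpha\wedge\alpha'|\neq|\beta\wedge\beta'|$ from Lemma \ref{lem: words_of_s}(2), which only hold for products of generators of length $n\leq\ell$; thus for a general long monomial one genuinely needs all the higher partial-isometry relations, i.e.\ $\ell=\infty$. The delicate bookkeeping is to verify that for $\alpha\neq\beta$ of equal length, every term surviving the application of \eqref{tauKMS} (namely those with matching interior multi-indices) is annihilated by these orthogonality relations, so that the total vanishes and matches $\tau(S_\alpha S_\beta^*)=0$. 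Once this is established, Lemma \ref{lem:preserveKMS_1} supplies the unitary implementation $U_{\varphi_A}$, setting the stage for the later proof of $D$-isometry.
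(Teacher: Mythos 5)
Your proposal is correct, and for the first claim (well-definedness of the action) it coincides with the paper's proof: verification of the Cuntz--Krieger relations via Remark \ref{rem: orthogonality} and the intertwining relation $Ap=qA$, then coassociativity and the counit identity; the paper deduces the Podle\'{s} density condition from $(\varepsilon_A\otimes\id)\circ\varphi_A=\id$ with a citation, rather than from unitarity of $u$, but both are standard. For KMS preservation your bookkeeping is genuinely different, though it rests on the same two pillars (Lemma \ref{lem: words_of_s} and Proposition \ref{prop:equal_volume}). The paper does not test $\tau$-invariance on arbitrary monomials $S_\alpha S_\beta^*$: it uses the groupoid-adapted spanning family $S_{\alpha\beta_{|\beta|}\nu}S_{\beta\nu}^*$ with $\alpha.\beta\in I_A$, $\beta\nu\in V_A$ (Remark \ref{rem: char_functions}), and first collapses $\varphi_A$ of such an element, via \eqref{eq:orthogonality2}, into the normal form \eqref{eq:phi_A}, whose terms $S_{\alpha'\delta\nu'}S_{\alpha''\delta\nu'}^*$ automatically have distinct legs whenever the original element does; both sides of \eqref{eq:KMS_pres} then vanish except in the genuinely diagonal case (empty $\alpha$ and $|\beta|=1$), which is settled by Proposition \ref{prop:equal_volume} exactly as you do. Your route --- apply $\id\otimes\tau$ first, so that \eqref{tauKMS} forces $\alpha'=\beta'$, then annihilate each surviving coefficient for $\alpha\neq\beta$ of equal length via $u_{\alpha,\alpha'}(u_{\beta,\alpha'})^*=u_{\alpha,\alpha'}q_{\alpha,\alpha'}q_{\beta,\alpha'}(u_{\beta,\alpha'})^*=0$ (Lemma \ref{lem: words_of_s}) --- is equally valid and more elementary, avoiding the $I_A$-indexed case analysis; note that the vanishing is term-by-term, not a cancellation, so your hedge ``cancel or be absent'' can be sharpened. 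What the paper's choice buys is the explicit expansion \eqref{eq:phi_A}, which is reused several times in the proof of Proposition \ref{prop:G_A_isom}, so deriving it at this stage serves double duty. Two details to make explicit in your version: monomials with $|\alpha|\neq|\beta|$, and in particular $S_\alpha$ or $S_\beta^*$ alone, are covered because gauge invariance of $\tau$ (or \eqref{tauKMS}) kills both sides; and in the diagonal case the reduced sum is $\sum_{\alpha'}p_{\alpha,\alpha'}\tau(S_{\alpha'}S_{\alpha'}^*)$ (your displayed expression carries a stray $u_{\alpha,\alpha'}$), where $\sum_{\alpha'\in V_A^n}p_{\alpha,\alpha'}=1$ comes from Lemma \ref{lem: words_of_s}(1) --- which, as you correctly stress, is where the hypothesis $\ell=\infty$ (i.e.\ $n\leq\ell$ for every $n$) enters.
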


\begin{proof}
It suffices to prove the first part for $\ell=1$. The map $\varphi_A$ extends to a $*$-homomorphism on $O_A$ if the family $\{\varphi_A(S_{\alpha})\}_{\alpha \in V_A^1}$ satisfies the Cuntz--Krieger relations. Indeed, due to orthogonality (see Remark \ref{rem: orthogonality}) for all $\alpha \in V_A^1$ we have $$\varphi_A(S_\alpha)\varphi_A(S_{\alpha})^*=\sum_{\alpha'=1}^Np_{\alpha, \alpha'}\otimes S_{\alpha'}S_{\alpha'}^*.$$ Then, it is clear that 
$$\sum_{\alpha=1}^N\varphi_A(S_\alpha)\varphi_A(S_{\alpha})^*=1.$$ 
Let now $\beta\in \{1, \ldots, N\}$, $\beta\neq \alpha$. Then similarly,
\begin{align*}
\varphi_A(S_{\alpha})^*\varphi_A(S_{\beta})&= \left(\sum_{\alpha'=1}^N (u_{\alpha,\alpha'})^*\otimes S_{\alpha'}^*\right) \left(\sum_{\beta'=1}^N u_{\beta,\beta'}\otimes S_{\beta'}\right)=\sum_{\alpha'=1}^N(u_{\alpha,\alpha'})^*u_{\beta,\alpha'}\otimes S^*_{\alpha'}S_{\alpha'}=0.
\end{align*}
Moreover, 
\begin{align*}
\varphi_A(S_{\alpha})^*\varphi_A(S_{\alpha})&= \sum_{\beta=1}^Nq_{\alpha,\beta}\otimes S^*_{\beta}S_{\beta}
=\sum_{\beta=1}^N q_{\alpha,\beta}\otimes \left(\sum_{\beta'=1}^N A_{\beta,\beta'} S_{\beta'}S_{\beta'}^*\right)\\
&=\sum_{\beta'=1}^N\left(\sum_{\beta=1}^N q_{\alpha,\beta}A_{\beta,\beta'}\right)\otimes S_{\beta'}S_{\beta'}^*
=\sum_{\beta'=1}^N\left(\sum_{\beta=1}^NA_{\alpha,\beta}p_{\beta,\beta'}\right)\otimes S_{\beta'}S_{\beta'}^* \\&=\sum_{\beta=1}^NA_{\alpha,\beta} \varphi_A(S_{\beta})\varphi_A(S_{\beta})^*.
\end{align*}

Furthermore, one can see that $(\varepsilon_A \otimes \mathrm{id}) \circ \varphi_A = \mathrm{id}$ and $(\Phi_A \otimes \mathrm{id})\circ \varphi_A=(\mathrm{id} \otimes \varphi_A)\circ \varphi_A$ by checking both equalities on the generators of $O_A$. This implies the second condition in Definition \ref{def:cqgaction} (see for instance the proof of \cite[Lem 2.13]{commer17actions}), hence the result.

As for the second part of the statement, assume $\ell=\infty$. Then, for the KMS preservation it suffices to show that for $\alpha.\beta \in I_A$ and $\nu\in V_A$ such that $\beta \nu \in V_A$, one has
\begin{equation}\label{eq:KMS_pres}
(\id\otimes \tau)\varphi_A(S_{\alpha\beta_{|\beta|}\nu}S_{\beta \nu}^*)=\tau(S_{\alpha\beta_{|\beta|}\nu}S_{\beta \nu}^*)1,
\end{equation}
as those $S_{\alpha\beta_{|\beta|}\nu}S_{\beta \nu}^*$ span $\LC$. Indeed, we have
$$\varphi_A(S_{\alpha\beta_{|\beta|}\nu}S_{\beta \nu}^*)=\sum_{\alpha',\beta',\nu',\delta} u_{\alpha\beta_{|\beta|}\nu,\alpha'\delta\nu'}(u_{\beta \nu, \beta' \nu'})^*\otimes S_{\alpha'\delta \nu'}S_{\beta' \nu'}^*,$$ 
where $\alpha',\beta',\nu',\delta\in V_A, |\alpha'|=|\alpha|, |\beta'|=|\beta|, |\nu'|=|\nu|, |\delta|=|\beta_{|\beta|}|=1$ and $\alpha'\delta\nu',\beta'\nu'\in V_A$. In fact, due to Lemma \ref{lem: words_of_s} (in particular \eqref{eq:orthogonality2}) we can simplify the above sum to
\begin{equation}\label{eq:phi_A}
\varphi_A(S_{\alpha\beta_{|\beta|}\nu}S_{\beta \nu}^*)=\sum_{\alpha',\alpha'',\nu',\delta} u_{\alpha\beta_{|\beta|}\nu,\alpha'\delta\nu'}(u_{\beta \nu, \alpha'' \delta \nu'})^*\otimes S_{\alpha'\delta \nu'}S_{\alpha''\delta \nu'}^*,
\end{equation}
where $\alpha',\nu',\delta$ are as before, $\alpha''\in V_A$ with $|\alpha''|=|\beta|-1, \alpha''\delta\in V_A$ and $\alpha'_{|\alpha'|}\neq \alpha''_{|\alpha''|}.$ 

Therefore, if $\alpha\neq {\o}$ then both sides of \eqref{eq:KMS_pres} agree as they are zero -- recall that as $\alpha. \beta \in I_A$, in that case we have $\alpha_{|\alpha|}\neq \beta_{|\beta|-1}$. Now, if $\alpha={\o}$ and $|\beta|>1$ we get $\alpha'={\o}$ and $\alpha'' \neq {\o}$, thus both sides of \eqref{eq:KMS_pres} again agree as they are zero. Now, for $\alpha={\o}$ and $|\beta|=1$, we focus on $C_c^{\infty}(\Sigma_A)$, i.e.\ the $^*$-algebra generated by $\{S_{\beta \nu} S_{\beta \nu}^*: \beta \in V_A^1,\nu \in V_A, \beta \nu \in V_A\}$ where we have
\begin{equation}\label{eq:phi_A_1}
\varphi_A(S_{\beta \nu}S_{\beta \nu}^*)=\sum_{\nu',\delta} u_{\beta \nu,\delta\nu'}(u_{\beta \nu, \delta \nu'})^*\otimes S_{\delta \nu'}S_{\delta \nu'}^*,
\end{equation}
with $\nu', \delta$ still as above.

In this case $(\id\otimes \tau) \varphi_A (S_{\beta \nu}S_{\beta \nu}^*)= \tau(S_{\beta \nu}S_{\beta \nu}^*)1$, due to Proposition \ref{prop:equal_volume}. This completes the proof.
\end{proof}

We are now ready for the first part of our result.

\begin{prop}\label{prop:G_A_isom}
We have $(\GAi,\varphi_A)\in Q_A(D).$
\end{prop}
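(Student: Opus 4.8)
The plan is to verify the three conditions of Definition \ref{def:D_isom} for the pair $(\GAi,\varphi_A)$. Condition (1), preservation of $\tau$, is exactly the second assertion of Proposition \ref{prop:actions}, so by Lemma \ref{lem:preserveKMS_1} the action is implemented by a unitary $U_{\varphi_A}\in B_{\CAi}(\CAi\otimes L^2(O_A,\tau))$. It then remains to establish conditions (2) and (3), which together amount to the single statement that $U_{\varphi_A}$ commutes with $1\otimes D$: since $D$ is self-adjoint, commutation on a core forces both invariance of the domain and the intertwining identity $U_{\varphi_A}(1\otimes D)U_{\varphi_A}^*=1\otimes D$.

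Because $D$ has pure point spectrum with finite-dimensional eigenspaces $E_\zeta$, $\zeta\in\spec(D)$, and because the algebraic span of an eigenbasis is always a core for a self-adjoint operator with pure point spectrum, it suffices to show that $V_{\varphi_A}$ maps each $E_\zeta$ into $\CAi\otimes E_\zeta$. Indeed, $1\otimes D$ then acts as the scalar $\zeta$ on $\CAi\otimes E_\zeta$, so $U_{\varphi_A}$ trivially commutes with $1\otimes D$ on each of these (right-module) spectral subspaces, and as they span a core the commutation propagates. Concretely, for $T\in O_A$ with $\rho(T)\xi\in E_\zeta$ one must check that $(\id\otimes\rho)(\varphi_A(T))(1\otimes\xi)$ again lies in $\CAi\otimes E_\zeta$.

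To establish this eigenspace invariance I would read off the eigenvalue data from the spectral decomposition recalled in Subsection \ref{sec:G_A_MMS}. A Haar wavelet $\mathrm{e}_{(\gamma,\nu,j)}$ lies in $F_A^\perp$ and satisfies $D\mathrm{e}_{(\gamma,\nu,j)}=-(\lambda^A_{s(\gamma)}(\nu)+|\gamma|)\mathrm{e}_{(\gamma,\nu,j)}$, while each kernel vector $\mathrm{e}_\gamma$ is a $\pm|\gamma|$-eigenvector according to whether $|s(\gamma)|=1$; thus $\zeta$ is completely determined by the word lengths entering $|\gamma|=|r(\gamma)|+|s(\gamma)|$, by membership in $F_A$, and by the quantities $u_{\nu_{|\nu|}}$ and $P_{\nu_{i},\nu_{i+1}}$ appearing in \eqref{eq:eigenvalues}. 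Expressing the relevant eigenvectors through the generators $S_\mu S_\rho^*$ as in Remarks \ref{rem: char_functions} and \ref{rem: char_functions2}, and applying the length-preserving linear map $\varphi_A$, one sees that $\varphi_A$ sends such an eigenvector to a combination of vectors of the same shape: the admissibility pattern is preserved by Lemmas \ref{lem: words_of_s_0} and \ref{lem: words_of_s} together with property (II); the matching of common prefixes and suffixes, hence the value of $|\gamma|$ and membership in $F_A$, is controlled by Lemma \ref{lem: words_of_s}(2); and by Proposition \ref{prop:equal_volume} every surviving coefficient $u_{\cdot,\cdot}$ connects letters with equal Perron--Frobenius entries, so that each factor $u_{\nu_i}$ and each transition probability $P_{\nu_i,\nu_{i+1}}=A_{\nu_i,\nu_{i+1}}u_{\nu_{i+1}}/(\lambda_{\max}u_{\nu_i})$ in \eqref{eq:eigenvalues} is left unchanged. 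Hence the eigenvalue $\zeta$ is preserved and $V_{\varphi_A}(E_\zeta)\subseteq\CAi\otimes E_\zeta$.

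The main obstacle is precisely this last bookkeeping step: translating the Haar-wavelet eigenbasis into Cuntz--Krieger words, tracking how $\varphi_A$ redistributes the labels, and verifying term by term that all of the data feeding \eqref{eq:eigenvalues} -- most delicately the products of transfer probabilities -- are invariant. A possibly cleaner organisation, which I would fall back on if the direct computation becomes unwieldy, is to factor the argument through $D=(2P_A-1)(\Delta+M_{L})$ with $|D|=\Delta+M_{L}$, showing separately that $U_{\varphi_A}$ commutes with $1\otimes M_{L}$ (immediate from length preservation), with $1\otimes\Delta$ (using the integral representation \eqref{eq:Int_rep} and the invariance of the Ahlfors structure), and with $1\otimes P_A$ (invariance of the subspace $F_A$); these three commutations then combine to give commutation with $1\otimes D$.
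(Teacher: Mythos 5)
Your outline reproduces the paper's proof skeleton exactly: condition (1) of Definition \ref{def:D_isom} from Proposition \ref{prop:actions}, unitary implementation from Lemma \ref{lem:preserveKMS_1}, reduction of conditions (2)--(3) to commutation with $1\otimes D$ on a core followed by a closedness argument, and then a splitting of $D$ into pieces. Indeed your ``fallback'' (commuting separately with $1\otimes M_{L}$, $1\otimes P_A$ and $1\otimes \Delta$) is literally what the paper does, and your primary eigenspace-invariance formulation is an equivalent repackaging of it. Your reading of the spectral data is correct (eigenvalue $-(\lambda^A_{s(\gamma)}(\nu)+|\gamma|)$ on wavelets, $\pm|\gamma|$ on kernel vectors according to whether $|s(\gamma)|=1$), and you have correctly identified the two engines of the bookkeeping: Proposition \ref{prop:equal_volume} (non-vanishing coefficients force letter-wise equality of Perron--Frobenius entries, hence of cylinder measures and transition probabilities) and Lemma \ref{lem: words_of_s}(2) (preservation of wedge lengths).

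The genuine gap is that the verification you defer as ``bookkeeping'' is not a routine check -- it is essentially the entire proof, and your sketch omits an ingredient it cannot do without. In the paper, the commutation with $1\otimes\Delta$ takes about two pages: one computes $\Delta$ on the spanning functions $\chi_{\alpha\beta_{|\beta|}\nu}\star\chi_{\beta\nu}^*$ (formula \eqref{eq:G_A_isom_3}, coefficients $c_{\beta,\nu,j}$), expands $U_{\varphi_A}(1\otimes\Delta)U_{\varphi_A}^*$ via \eqref{eq:phi_A}, and then must collapse a multiple sum. That collapse does not follow from ``each surviving term has the same eigenvalue data''; besides the coefficient replacements justified by Proposition \ref{prop:equal_volume} and Lemma \ref{lem: words_of_s}(2), it uses the completeness relations of the magic unitaries and, crucially, a direct application of property (III) of Definition \ref{defn: Ariadne_0} to commute source and range projections of long words past one another (in the paper, $q_{\alpha'',\hat{\beta}}$ with $p_{\delta\nu',\beta_{|\beta|}\nu}$). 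This is exactly where $\ell=\infty$ enters and why the statement is only claimed for $\GAi$; your proposal cites property (II) and the word-length lemmas but never locates this use of (III), so the step that actually carries the theorem is missing. Two smaller points: commutation with $1\otimes M_{L}$ is not ``immediate from length preservation'', since $M_{L}$ multiplies by $|r(\gamma)|+|s(\gamma)|$ on the supporting bisection $\Gamma_\gamma$, which depends on the common-suffix structure of the words and not just their lengths (this is again \eqref{eq:phi_A}); and your claim that one-sided invariance $V_{\varphi_A}(E_\zeta)\subseteq\CAi\otimes E_\zeta$ already yields $U_{\varphi_A}(1\otimes D)U_{\varphi_A}^*=1\otimes D$ needs the (easy but unstated) observation that a unitary module map preserving every $\CAi\otimes E_\zeta$, with the $E_\zeta$ mutually orthogonal and total, automatically has an adjoint with the same property.
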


\begin{proof}
The fact that $\varphi_A$ preserves $\tau$ is proved in Proposition \ref{prop:actions} and by Lemma \ref{lem:preserveKMS_2} we have 
\begin{equation*}
U_{\varphi_A}(\CAi \odot \rho(\LC)\xi)=\CAi \odot \rho(\LC)\xi.
\end{equation*}

To conclude, it is enough to show that $U_{\varphi_A}$ commutes with $1\otimes D$ on $\CAi \odot \rho(\LC)\xi$.

Indeed, as $\rho(\LC)\xi$ is a core for $D$, we have that $\CAi \odot \rho(\LC)\xi$ is a core for $1\otimes D$. Pick $\eta \in \Dom(1\otimes D)$ and a sequence $\eta_n\in \CAi \odot \rho(\LC)\xi$ converging to $\eta$ in $\Dom(1\otimes D)$; namely $\eta_n \to \eta,\,\, (1\otimes D)(\eta_n)\to (1\otimes D)(\eta)$ inside $\CAi \otimes L^2(O_A,\tau)$. Then, $U_{\varphi_A}(\eta_n)\to U_{\varphi_A}(\eta)$ and $$(1\otimes D)U_{\varphi_A}(\eta_n)= U_{\varphi_A}(1\otimes D)(\eta_n)\to U_{\varphi_A}(1\otimes D)(\eta).$$ As $(1\otimes D)$ is closed, we get $U_{\varphi_A}(\eta)\in \Dom (1\otimes D)$ and $(1\otimes D)U_{\varphi_A}(\eta)= U_{\varphi_A}(1\otimes D)(\eta)$. Doing the same for $U_{\varphi_A}^*$ we obtain.
\begin{equation}\label{eq:G_A_isom_2}
U_{\varphi_A}(\Dom (1\otimes D))= \Dom (1\otimes D),\qquad U_{\varphi_A}(1\otimes D)U_{\varphi_A}^*=1\otimes D.
\end{equation}
We now prove the aforementioned commutation relation. First recall that $D=-\Delta + V$, where $\Delta$ is the log-Laplacian and $V=(2P_A-1)M_{L}$ is the potential, see \eqref{eq:D}. We will show an a priori stronger fact, that $U_{\varphi_A}$ commutes with $1\otimes V$ and $1\otimes \Delta$, separately. Also, in the following computations we switch from working on $L^2(O_A,\tau)$ to $L^2(\Gamma_A,\mu_{\Gamma_A})$. 

\vspace{0.2cm}

\noindent \textbf{Commuting with $1\otimes V$:} The collection $\chi_{\Gamma_{\alpha.\beta}}$ with $\alpha.\beta \in \tilde{V}_A$ (see \eqref{eq:Fock}) spans a dense subset of $F_A=\text{Im}(P_A)$. Following Remark \ref{rem: char_functions}, every such $\chi_{\Gamma_{\alpha.\beta}}$ equals $\chi_{\alpha \beta}\star \chi_{\beta}^*$, and so \eqref{eq:phi_A} gives $U_{\varphi}(\CAi \otimes F_A) \subset \CAi \otimes F_A$ and similarly for $U_{\varphi}^*$, hence 
$$U_{\varphi}(\CAi \otimes F_A) = \CAi \otimes F_A.$$ 
This means that $U_{\varphi}$ commutes with $1\otimes P_A$. Moreover, from \eqref{eq:phi_A} one immediately sees that $U_{\varphi_A}$ commutes with $1\otimes M_{L}$. As a result, $U_{\varphi_A}$ commutes with $1\otimes V$. 

\vspace{0.2cm}

\noindent \textbf{Commuting with $1\otimes \Delta$:} It suffices to evaluate the commutation relation on functions $\chi_{\alpha \beta_{|\beta|}\nu} \star \chi_{\beta \nu}^*=\chi_{s^{-1}_{\alpha.\beta} (C(\beta \nu))}$ for $\alpha.\beta \in I_A$ and $\nu\in V_A$ such that $\beta \nu \in V_A$. Also, recall the eigenvalues $\lambda^A$ of $\Delta$ listed in \eqref{eq:eigenvalues}. Then, we have
\begin{equation}\label{eq:G_A_isom_3}
\Delta(\chi_{\alpha \beta_{|\beta|}\nu} \star \chi_{\beta \nu}^*)=\sum_{j}c_{\beta,\nu,j} \chi_{\alpha \beta_{|\beta|}j} \star \chi_{\beta j}^*,
\end{equation}
where the sum is over all $j\in V_A$ with $|j|=|\nu|,\,\, \beta j\in V_A$, and
$$c_{\beta,\nu,j}=
\begin{cases}
\lambda_{\beta}^A(\beta \nu)-\lambda_{\max}u_{(\beta \nu)_{|\beta \nu|}}, &\text{if } j=\nu\\
-|\beta j\wedge \beta \nu|_{\lambda}^{-\df}\mu(C(\beta \nu)), &\text{if } j\neq \nu 
\end{cases},\qquad \text{where } |\cdot|_{\lambda}:=\lambda^{-|\cdot|}.$$
Indeed, one has $\Delta(\chi_{\alpha \beta_{|\beta|}\nu}\star \chi_{\beta \nu}^*)=\Delta_{\alpha.\beta}(\chi_{\alpha \beta_{|\beta|}\nu}\star \chi_{\beta \nu}^*)$. Let $(x,n,y)\in \Gamma_{\alpha.\beta}$ with $y\in C(\beta \nu)$ and for $0\leq n\leq |\nu|-1$ define the annulus $B_{y,n}:=B(y,\lambda^{-|\beta|-n})\setminus B(y,\lambda^{-|\beta|-n-1})$. Then, 
\small
\begin{align*}
\Delta_{\alpha.\beta}(\chi_{\alpha \beta_{|\beta|}\nu}\star \chi_{\beta \nu}^*)(x,n,y)&=\int_{\Gamma_{\alpha.\beta}}\frac{\chi_{\alpha \beta_{|\beta|}\nu}\star \chi_{\beta \nu}^*(x,n,y)-\chi_{\alpha \beta_{|\beta|}\nu}\star \chi_{\beta \nu}^*(z,m,w)}{\d\left((x,n,y),(z,m,w)\right)^{\df}} \d \mu_{\Gamma_A}(z,m,w)\\
&=\int_{\Gamma_{\alpha.\beta}\setminus s^{-1}_{\alpha.\beta} (C(\beta \nu))} \frac{1}{\d\left((x,n,y),(z,m,w)\right)^{\df}}\d \mu_{\Gamma_A}(z,m,w)\\
&=\int_{C(\beta)\setminus C(\beta \nu)}\frac{1}{\d(y,w)^{\df}}\d \mu (w)\\
&=\sum_{n=0}^{|\nu|-1} \int_{B_{y,n}}\frac{1}{\d(y,w)^{\df}}\d \mu (w)\\
&=\sum_{n=0}^{|\nu|-1} \lambda^{(|\beta|+n)\df}\left(\mu(B(y,\lambda^{-|\beta|-n}))-\mu(B(y,\lambda^{-|\beta|-n-1}))\right)\\
&=\lambda_{\beta}^A(\beta \nu)-\lambda_{\max}u_{(\beta \nu)_{|\beta \nu|}}.
\end{align*}
\normalsize
If now $y\in C(\beta)\setminus C(\beta \nu)$, then 
\begin{align*}
\Delta_{\alpha.\beta}(\chi_{\alpha \beta_{|\beta|}\nu}\star \chi_{\beta \nu}^*)(x,n,y)&=-\int_{C(\beta \nu)}\frac{1}{\d(y,w)^{\df}}\d \mu (w)\\
&=-\d(y,C(\beta \nu))^{-\df}\mu(C(\beta \nu))
\end{align*}
and Equation \eqref{eq:G_A_isom_3} follows. 

Our goal now is to show that 
\begin{equation}\label{eq:G_A_isom_4}
U_{\varphi_A}(1\otimes \Delta)U_{\varphi_A}^*(1\otimes \chi_{\alpha \beta_{|\beta|}\nu}\star \chi_{\beta \nu}^*)=(1\otimes \Delta)(1\otimes \chi_{\alpha \beta_{|\beta|}\nu}\star \chi_{\beta \nu}^*).
\end{equation}
From \eqref{eq:phi_A} we have 
\begin{align*}
\Sigma_1&:=U_{\varphi_A}^*(1\otimes \chi_{\alpha \beta_{|\beta|}\nu}\star \chi_{\beta \nu}^*)=\sum_{J_1} k_{A}(u_{\alpha\beta_{|\beta|}\nu,\alpha'\delta\nu'}(u_{\beta \nu, \alpha'' \delta \nu'})^*)\otimes \chi_{\alpha'\delta \nu'}\star \chi_{\alpha''\delta \nu'}^*\\
&=\sum_{J_1}u_{\alpha'' \delta \nu',\beta \nu}(u_{\alpha'\delta\nu',\alpha\beta_{|\beta|}\nu})^*\otimes \chi_{\alpha'\delta \nu'}\star \chi_{\alpha''\delta \nu'}^*,
\end{align*}
where the index set $J_1$ is the collection of $\alpha',\alpha'',\nu',\delta\in V_A$ such that 
\begin{itemize}
\item $\alpha'\delta\nu', \alpha''\delta\in V_A$;
\item $|\alpha'|=|\alpha|,|\nu'|=|\nu|, |\delta|=|\beta_{|\beta|}|=1,|\alpha''|=|\beta|-1,\alpha'_{|\alpha'|}\neq \alpha''_{|\alpha''|}$.
\end{itemize}
Then, $$\Sigma_2:=(1\otimes \Delta)(\Sigma_1)=\sum_{J_2}c_{\alpha'' \delta,\nu',j}  u_{\alpha'' \delta \nu',\beta \nu}(u_{\alpha'\delta\nu',\alpha\beta_{|\beta|}\nu})^*\otimes \chi_{\alpha'\delta j}\star \chi_{\alpha''\delta j}^*,$$ where $J_2$ is the collection of $\alpha',\alpha'',\nu',\delta,j\in V_A$ such that 
\begin{itemize}
\item $\alpha'\delta\nu', \alpha''\delta j\in V_A$;
\item $|\alpha'|=|\alpha|,|j|=|\nu'|=|\nu|, |\delta|=|\beta_{|\beta|}|=1,|\alpha''|=|\beta|-1,\alpha'_{|\alpha'|}\neq \alpha''_{|\alpha''|}$.
\end{itemize} 
Moreover, 
\begin{align*}
\Sigma_3&:=U_{\varphi_A}(\Sigma_2)= \sum_{J_2}U_{\varphi_A}(1\otimes  \chi_{\alpha'\delta j}\star \chi_{\alpha''\delta j}^*)\cdot c_{\alpha'' \delta,\nu',j}  u_{\alpha'' \delta \nu',\beta \nu}(u_{\alpha'\delta\nu',\alpha\beta_{|\beta|}\nu})^*\\
&=\sum_{J_3}c_{\alpha'' \delta,\nu',j}u_{\alpha'\delta j, \zeta\delta'j'}(u_{\alpha''\delta j,\zeta' \delta' j'})^*u_{\alpha'' \delta \nu',\beta \nu}(u_{\alpha'\delta\nu',\alpha\beta_{|\beta|}\nu})^*\otimes \chi_{\zeta \delta' j'}\star \chi_{\zeta' \delta' j'}^*,
\end{align*}
where $J_3$ is the collection of $\alpha',\alpha'',\zeta,\zeta',\nu',\delta,\delta', j,j'\in V_A$ such that 
\begin{itemize}
\item $\alpha'\delta\nu',\alpha''\delta j,\zeta\delta'j',\zeta'\delta'j'\in V_A$;
\item $|\zeta|=|\alpha'|=|\alpha|, |\delta'|=|\delta|=|\beta_{|\beta|}|=1, |j'|=|j|=|\nu'|=|\nu|$;
\item $|\zeta'|=|\alpha''|=|\beta|-1,\alpha'_{|\alpha'|}\neq \alpha''_{|\alpha''|}, \zeta_{|\zeta|}\neq \zeta'_{|\zeta'|}$.
\end{itemize} 
In particular, 
$$
\Sigma_3=\sum_{J_3}c_{\alpha'' \delta,\nu',j}u_{\alpha',\zeta}p_{\delta j, \delta' j'}(u_{\alpha'',\zeta'})^*u_{\alpha'',\hat{\beta}}p_{\delta \nu',\beta_{|\beta|}\nu}(u_{\alpha',\alpha})^*\otimes \chi_{\zeta \delta' j'}\star \chi_{\zeta' \delta' j'}^*,
$$
where we make the convention that $u_{{\o},{\o}}=1$ (this is relevant only when $|\beta|=1$, in which case $\zeta'=\alpha''={\o}$). In fact, due to orthogonality we can reduce $J_3$ by assuming $\zeta'=\hat{\beta}$ (where $\hat{\beta}={\o}$ if $|\beta|=1$). Hence we have $\zeta_{|\zeta|}\neq \hat{\beta}_{|\hat{\beta}|}$ and
$$\Sigma_3=\sum_{J_3'}c_{\alpha'' \delta,\nu',j}u_{\alpha',\zeta}p_{\delta j, \delta' j'}q_{\alpha'',\hat{\beta}}p_{\delta \nu',\beta_{|\beta|}\nu}(u_{\alpha',\alpha})^*\otimes \chi_{\zeta \delta' j'}\star \chi_{\hat{\beta} \delta' j'}^*.
$$
Since $\alpha''\delta \nu', \hat{\beta}\beta_{|\beta|}\nu\in V_A$, from Definition \ref{defn: Ariadne_0} (III) we have that $q_{\alpha'',\hat{\beta}}$ commutes with $p_{\delta \nu',\beta_{|\beta|}\nu}$. Then, we reduce $J_3'$ further by assuming $\delta'=\beta_{|\beta|}$, since $p_{\delta j, \delta' j'}p_{\delta \nu',\beta_{|\beta|}\nu}=0$ if $\delta'\neq \beta_{|\beta|}$. Then, 
$$\Sigma_3=\sum_{J_3''}c_{\alpha'' \delta,\nu',j}u_{\alpha',\zeta}p_{\delta j, \beta_{|\beta|} j'}q_{\alpha'',\hat{\beta}}p_{\delta \nu',\beta_{|\beta|}\nu}(u_{\alpha',\alpha})^*\otimes \chi_{\zeta \beta_{|\beta|} j'}\star \chi_{\beta j'}^*,
$$
where now $J_3''$ is the collection of $\alpha',\alpha'',\zeta,\nu',\delta, j,j'\in V_A$ such that 
\begin{itemize}
	\item $\alpha'\delta\nu',\alpha''\delta j\in V_A$;
	\item $|\zeta|=|\alpha'|=|\alpha|, =|\delta|=|\beta_{|\beta|}|=1, |j'|=|j|=|\nu'|=|\nu|$;
	\item $|\alpha''|=|\beta|-1,\alpha'_{|\alpha'|}\neq \alpha''_{|\alpha''|}$.
\end{itemize} 

Now the focus is on the coefficients appearing in \eqref{eq:G_A_isom_3}:

$$c_{\alpha'' \delta,\nu',j}=
\begin{cases}
\lambda_{\alpha'' \delta}^A(\alpha'' \delta \nu')-\lambda_{\max}u_{(\alpha''\delta \nu')_{|\alpha''\delta \nu'|}}, &\text{if } j=\nu',\\
-|\alpha'' \delta j\wedge \alpha'' \delta \nu'|_{\lambda}^{-\df}\mu(C(\alpha'' \delta \nu')), &\text{if } j\neq \nu'.
\end{cases}$$
First, decompose $\Sigma_3$ into restricted sums $\Sigma_{3}[j=\nu']$ and $\Sigma_{3}[j\neq \nu']$. Specifically, 
$$\Sigma_{3}[j=\nu']= \sum_{\alpha', \alpha'',\delta,\zeta, \nu'} c_{\alpha'' \delta,\nu',\nu'} u_{\alpha',\zeta}p_{\delta \nu', \beta_{|\beta|} \nu}q_{\alpha'',\hat{\beta}}p_{\delta \nu',\beta_{|\beta|}\nu}(u_{\alpha',\alpha})^*\otimes \chi_{\zeta \beta_{|\beta|} \nu}\star \chi_{\beta \nu}^*,$$ since for $j'\neq \nu$, $p_{\delta j, \beta_{|\beta|}j'}p_{\delta \nu',\beta_{|\beta|}\nu}=0$. The non-zero terms in $\Sigma_{3}[j=\nu']$ should have $q_{\alpha'',\hat{\beta}}p_{\delta \nu',\beta_{|\beta|}\nu}\neq 0$, and in fact $u_{\alpha'' \delta \nu', \beta \nu}\neq 0$. From Proposition \ref{prop:equal_volume}, for every $1\leq m_1\leq m_2\leq |\beta \nu|$ we then get 
\begin{equation}\label{eq:G_A_isom_5}
\mu(C\left((\alpha'' \delta \nu')_{m_1}\ldots (\alpha'' \delta \nu')_{m_2}\right)=\mu(C((\beta \nu)_{m_1}\ldots (\beta \nu)_{m_2}).
\end{equation}
As a result, the coefficient $c_{\alpha'' \delta,\nu',\nu'}$ can be replaced by $c_{\beta,\nu,\nu}.$ Similarly, 
$$\Sigma_{3}[j\neq \nu']= \sum_{\alpha'',\delta,j\neq \nu'} \sum_{\alpha',\zeta,j'\neq \nu} c_{\alpha'' \delta,\nu',j} u_{\alpha',\zeta}p_{\delta j, \beta_{|\beta|} j'}q_{\alpha'',\hat{\beta}}p_{\delta \nu',\beta_{|\beta|}\nu}(u_{\alpha',\alpha})^*\otimes \chi_{\zeta \beta_{|\beta|} j'}\star \chi_{\beta j'}^*,$$ since for $j'= \nu$ we have $p_{\delta j, \beta_{|\beta|} \nu}p_{\delta \nu',\beta_{|\beta|}\nu}=0$. For any non-zero term in $\Sigma_{3}[j\neq \nu']$ there is a $j'\neq \nu'$ such that $p_{\delta j, \beta_{|\beta|} j'}q_{\alpha'',\hat{\beta}}p_{\delta \nu',\beta_{|\beta|}\nu}\neq 0$. From \eqref{eq:G_A_isom_5} we get $\mu(C(\alpha''\delta \nu'))=\mu(C(\beta \nu)).$ Also, we get that $p_{\alpha'' \delta j,\beta j'} p_{\alpha'' \delta \nu', \beta \nu}\neq 0$, and so from Lemma \ref{lem: words_of_s} part (2) we must have $|\alpha'' \delta j \wedge \alpha'' \delta \nu'| = |\beta j' \wedge \beta \nu|.$ Consequently, $$-|\alpha'' \delta j\wedge \alpha'' \delta \nu'|_{\lambda}^{-\df}\mu(C(\alpha'' \delta \nu'))=-|\beta j'\wedge \beta \nu|_{\lambda}^{-\df}\mu(C(\beta \nu)).$$ In other words, in $\Sigma_3$ we can replace $c_{\alpha'' \delta,\nu',j}$ with $c_{\beta,\nu,j'}$.  

Therefore, 
\begin{align*}
\Sigma_3&=\sum_{J_3''}c_{\beta,\nu,j'}u_{\alpha',\zeta}p_{\delta j, \beta_{|\beta|} j'}q_{\alpha'',\hat{\beta}}p_{\delta \nu',\beta_{|\beta|}\nu}(u_{\alpha',\alpha})^*\otimes \chi_{\zeta \beta_{|\beta|} j'}\star \chi_{\beta j'}^*\\
&=\sum_{\alpha',\alpha'',\zeta,\delta, j'}c_{\beta,\nu,j'}u_{\alpha',\zeta}p_{\delta, \beta_{|\beta|}}q_{\alpha'',\hat{\beta}}p_{\delta,\beta_{|\beta|}}(u_{\alpha',\alpha})^*\otimes \chi_{\zeta \beta_{|\beta|} j'}\star \chi_{\beta j'}^*\\
&=\sum_{\alpha',\alpha'',\zeta,\delta, j'}c_{\beta,\nu,j'}u_{\alpha',\zeta}p_{\delta, \beta_{|\beta|}}q_{\alpha'',\hat{\beta}}(u_{\alpha',\alpha})^*\otimes \chi_{\zeta \beta_{|\beta|} j'}\star \chi_{\beta j'}^*\\
&=\sum_{\alpha',\alpha'',\zeta, j'} c_{\beta,\nu,j'}u_{\alpha',\zeta}q_{\alpha'',\hat{\beta}}(u_{\alpha',\alpha})^*\otimes \chi_{\zeta \beta_{|\beta|} j'}\star \chi_{\beta j'}^*\\
&= \sum_{\alpha',\zeta, j'} c_{\beta,\nu,j'}u_{\alpha',\zeta}\left[1-\sum_{\eta:\eta_{|\eta|}=\alpha'_{|\alpha'|}} q_{\eta,\hat{\beta}}\right](u_{\alpha',\alpha})^*\otimes \chi_{\zeta \beta_{|\beta|} j'}\star \chi_{\beta j'}^*\\
&= \sum_{\alpha',\zeta, j'} c_{\beta,\nu,j'}u_{\alpha',\zeta}(u_{\alpha',\alpha})^*\otimes \chi_{\zeta \beta_{|\beta|} j'}\star \chi_{\beta j'}^*\\
&=\sum_{j'}c_{\beta,\nu,j'}\otimes \chi_{\alpha \beta_{|\beta|} j'}\star \chi_{\beta j'}^*\\
&=(1\otimes \Delta)(1\otimes \chi_{\alpha \beta_{|\beta|}\nu}\star \chi_{\beta \nu}^*).
\end{align*}
This completes the proof.
\end{proof}

\subsection{Factorisation of isometric actions}

We will now prove that conversely, any $D$-isometric action of a compact quantum group on $O_{A}$ factors through $\varphi_{A}$. We start with an elementary Hilbert space result for which we provide a proof for completeness (it can be also deduced from Halmos' description of projections in general position).

\begin{lemma}\label{lem:FP_theorem}
Let $v,w$ be projections acting on a complex Hilbert space $H$. Then, $vw=wv$ if and only if $vwv=wvwv$.
\end{lemma}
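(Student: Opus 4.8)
The plan is to treat the two implications separately, the forward one being a routine computation and the converse resting on a short positivity/spectral argument. For the forward direction, assuming $vw=wv$ one computes $vwv=v(vw)=v^{2}w=vw$ and $wvwv=w(vw)v=w(wv)v=w^{2}v^{2}=wv=vw$, so both sides reduce to $vw$ and the identity $vwv=wvwv$ holds.

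For the converse, suppose $vwv=wvwv$. Taking adjoints and using that $v,w$ are self-adjoint gives the companion identity $vwvw=vwv$. I would then introduce the two \emph{positive contractions}
$$S:=wvw=(vw)^{*}(vw),\qquad T:=vwv=(wv)^{*}(wv),$$
each satisfying $0\leq S\leq 1$ and $0\leq T\leq 1$. The key observation is that the hypothesis and its adjoint produce a family of absorption identities: $wT=wvwv=T$ and $Tw=vwvw=T$, while $vT=v(vwv)=vwv=T$ and $Tv=(vwv)v=vwv=T$.

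With these in hand the crux is to prove $S=T$. Collapsing the repeated $ww$ gives $S^{2}=wvwvw=w(vwv)w=wTw=(wT)w=Tw=T$; then, using the absorption identities, $S^{3}=S\cdot S^{2}=wvw\cdot T=wv(wT)=wvT=w(vT)=wT=T$, so that $S^{3}=S^{2}$, i.e.\ $S^{2}(S-1)=0$. Since $S$ is a positive contraction, the continuous functional calculus forces its spectrum into the zero set of $\lambda^{2}(\lambda-1)$ on $[0,1]$, namely $\{0,1\}$; hence $S$ is a projection and $S=S^{2}=T$, that is $wvw=vwv$. Finally, writing $X:=vw-wv$ and expanding $X^{*}X=(wv-vw)(vw-wv)=wvw-wvwv-vwvw+vwv$, the substitutions $wvwv=vwv$ and $vwvw=vwv$ reduce this to $X^{*}X=S-T=0$, whence $X=0$ and $vw=wv$.

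I expect the main obstacle to be precisely the step from the algebraic relation $S^{3}=S^{2}$ to the conclusion that $S$ is a projection: this is where the positivity of $S=(vw)^{*}(vw)$ and the functional calculus are indispensable, since the purely algebraic relations between $v$ and $w$ do not on their own close the argument. As the statement notes, one could instead invoke Halmos' two-projection decomposition and verify the claim on the generic $2\times2$ blocks, but the spectral route above is more self-contained.
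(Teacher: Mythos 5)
Your proof is correct: the adjoint identity $vwvw=vwv$, the absorption identities, the computations $S^{2}=T$ and $S^{3}=T$, the spectral step, and the final $X^{*}X=S-T=0$ all check out. It is, however, a genuinely different route from the paper's. The paper first shows \emph{algebraically} that $P=vwv$ is idempotent (hence a projection, using only the hypothesis and $v^{2}=v$), then observes that $r=wv$ satisfies $r^{*}r=P$, so $r$ is a partial isometry; consequently $rr^{*}=wvw$ is also a projection, and a short computation identifies it with $P$. Commutativity then follows not from a norm computation but from exhibiting $wv$ itself as self-adjoint: $vwv=vr=vrr^{*}r=rr^{*}r=r=wv$, whence $vw=r^{*}=r=wv$. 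So where you pass through the spectral mapping theorem (from $S^{3}=S^{2}$ to $\sigma(S)\subseteq\{0,1\}$, hence $S=S^{2}=T$) and close with the C$^{*}$-identity $\|vw-wv\|^{2}=\|X^{*}X\|=0$, the paper stays at the level of the standard partial-isometry fact (``if $r^{*}r$ is a projection then $r$ is a partial isometry'') and concludes by self-adjointness of $wv$. The paper's argument buys freedom from functional calculus; yours buys self-containedness, not needing any prior facts about partial isometries. One small simplification available to you: positivity and the contraction bound are not actually needed for your key step --- $S=wvw$ is self-adjoint, and $S^{3}=S^{2}$ together with the spectral mapping theorem already forces $\sigma(S)\subseteq\{0,1\}$, hence $S=S^{2}$; alternatively you could have avoided spectral theory entirely by noting, as the paper does, that $T=vwv$ is a projection by direct computation, which combined with your identity $S^{2}=T$ and the absorption relations shortens the argument.
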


\begin{proof}
One direction is obvious. For the other one, assume that $vwv=wvwv$ and observe that $P = vwv$ is a projection: $P^{2} = vwvwv = v(vwv) = P$ and that $r = wv$ satisfies $r^{*}r = P$. In other words, $r$ is a partial isometry. As a consequence, $rr^{*} = wvw$ is a projection, from which we deduce that $$wvw = (wvw)^{2} = wvwvw = vwvw = (wvwv)^{*} = P.$$
As a consequence, $vrr^{*} = vP = P = rr^{*}$ and we conclude that $vwv = vr = vrr^{*}r = rr^{*}r = r$. In other words, $r = vwv$ is self-adjoint and $vw = r^{*} = r = wv$.


\end{proof}

\begin{thm}\label{thm:factoring}
The object $(\GAi,\varphi_A)$ is final in $Q_A(D)$. Namely, for every object $(\mathbb G,\varphi)$ in $Q_A(D)$ there is a unique compact quantum group homomorphism $\pi:\CAi \to \CG$ such that the following diagram commutes:
$$\begin{tikzcd}
  & \qquad \CAi \otimes O_A \arrow[dr,"\pi\otimes 1"] \\
O_A \arrow[ur,"\varphi_A"] \arrow[rr,"\varphi"] &&  \CG \otimes O_A
\end{tikzcd}.$$ 
If the action $\varphi$ is faithful then $\pi$ is surjective.
\end{thm}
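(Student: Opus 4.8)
The plan is to take an arbitrary object $(\mathbb{G},\varphi)\in Q_A(D)$ and manufacture the generators of $\CAi$ inside $\CG$, then invoke the universal property of $\CAi$. Since $U_{\varphi}$ commutes with $1\ot D$ (and hence so does $U_{\varphi}^{*}$), it preserves $\CG\ot E_{\zeta}$ for every eigenvalue $\zeta$ of $D$; in particular it preserves $\CG\ot E_{1}$ and $\CG\ot E_{2}$. Recall that $E_{1}=\langle\rho(S_{\beta}S_{\beta}^{*})\xi:\beta\in V_{A}^{1}\rangle$ and, because $S_{i}=\sum_{\beta}S_{i}S_{\beta}S_{\beta}^{*}$ with each $S_{i}S_{\beta}S_{\beta}^{*}$ a spanning vector of $E_{2}$, that $\rho(S_{i})\xi\in E_{2}$. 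Running the cyclicity-and-faithfulness argument of Lemma \ref{lem:preserveKMS_2} then upgrades this invariance to honest expansions
\begin{equation*}
\varphi(S_{\beta}S_{\beta}^{*})=\sum_{\beta'}p_{\beta,\beta'}\ot S_{\beta'}S_{\beta'}^{*},\qquad\varphi(S_{i})=\sum_{\alpha,\beta}d_{i,\alpha,\beta}\ot S_{\alpha}S_{\beta}S_{\beta}^{*},
\end{equation*}
with $p_{\beta,\beta'},d_{i,\alpha,\beta}\in\Pol(\mathbb{G})$ and the sums over admissible words. The whole theorem is thereby reduced to the analysis of these coefficients.

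The next step is to extract relations by pushing the Cuntz--Krieger relations \eqref{eq: CK-relations} through the $*$-homomorphism $\varphi$. Comparing $\varphi(S_{i}S_{i}^{*})=\varphi(S_{i})\varphi(S_{i})^{*}$ with the $E_{1}$-expansion forces $d_{i,\alpha,\beta}d_{i,\alpha,\beta}^{*}=p_{i,\alpha}$ for every admissible $\beta$ and $d_{i,\alpha,\beta}d_{i,\alpha',\beta}^{*}=0$ for $\alpha\neq\alpha'$; likewise $\varphi(S_{i}^{*}S_{i})=\varphi(S_{i})^{*}\varphi(S_{i})$ together with $S_{i}^{*}S_{i}=\sum_{j}A_{ij}S_{j}S_{j}^{*}$ produces the intertwining $Ap=qA$ and the value $\sum_{\alpha}d_{i,\alpha,\beta}^{*}d_{i,\alpha,\beta}=(Ap)_{i,\beta}$. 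The crux -- the step I expect to be hardest -- is to promote this to \emph{linearity}, namely that $d_{i,\alpha,\beta}$ is independent of $\beta$; granting it one sets $w_{i,\alpha}:=d_{i,\alpha,\beta}$ and $\sum_{\beta}S_{\beta}S_{\beta}^{*}=1$ collapses the expansion to $\varphi(S_{i})=\sum_{\alpha}w_{i,\alpha}\ot S_{\alpha}$, exactly the form of $\varphi_{A}$ in Proposition \ref{prop:actions}. To obtain $\beta$-independence I would use that $\varphi(S_{i})$ is a partial isometry whose range and source projections $\sum_{\alpha}p_{i,\alpha}\ot S_{\alpha}S_{\alpha}^{*}$ and $\sum_{\beta}(Ap)_{i,\beta}\ot S_{\beta}S_{\beta}^{*}$ both lie in the abelian level-one algebra, and combine the identity $d_{i,\alpha,\beta}d_{i,\alpha,\beta}^{*}=p_{i,\alpha}$ with the magic-unitary structure of $p$ below and the orthogonality relations of Lemma \ref{lem: words_of_s} to kill the dependence on $\beta$.

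Once linearity is available, checking the defining relations of Definition \ref{defn: Ariadne_0} for $w=(w_{i,\alpha})$, with $p_{i,\alpha}=w_{i,\alpha}w_{i,\alpha}^{*}$ and $q_{i,\alpha}=w_{i,\alpha}^{*}w_{i,\alpha}$, is comparatively routine. For property (I), the restriction of $\varphi$ to the abelian algebra $\langle S_{\alpha}S_{\alpha}^{*}:\alpha\in V_{A}^{1}\rangle\cong\mathbb{C}^{N}$ is a coaction of $\mathbb{G}$ on $N$ points, so by the standard correspondence between such coactions and magic unitaries $(p_{\alpha,\alpha'})$ is a magic unitary, and $(q_{\alpha,\alpha'})$ is seen to be one as well; the Perron--Frobenius relation $p\vec{u}=\vec{u}1=q\vec{u}$ follows from KMS-invariance of $\varphi$ on $\varphi(S_{\alpha}S_{\alpha}^{*})$ and $\tau(S_{\alpha}S_{\alpha}^{*})=u_{\alpha}$, as in Proposition \ref{prop:equal_volume}. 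Property (II) is the relation $Ap=qA$ found above. Property (III) is where Lemma \ref{lem:FP_theorem} enters: a product $w_{\alpha_{1},\beta_{1}}\cdots w_{\alpha_{n},\beta_{n}}$ is a partial isometry exactly when the source projection of each factor commutes with the range projection of the next, and I would verify each such commutation via the criterion $vw=wv\iff vwv=wvwv$ of Lemma \ref{lem:FP_theorem}; the needed identity $vwv=wvwv$ is supplied by computing $\varphi$ on the level-$n$ cylinder projection $S_{\mu}S_{\mu}^{*}$ -- which by linearity and multiplicativity is an explicit combination of level-$n$ cylinder projections with coefficients that are words in the $p_{i,\alpha}$ -- and using that this combination is idempotent.

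Finally, the universal property of $\CAi$ yields a unique unital $*$-homomorphism $\pi:\CAi\to\CG$ with $\pi(u_{\alpha,\beta})=w_{\alpha,\beta}$. It is a morphism of compact quantum groups, since $\Phi_{\mathbb{G}}\circ\pi$ and $(\pi\ot\pi)\circ\Phi_{A}$ agree on the generators: both equal the coefficients dictated by coassociativity $(\id\ot\varphi)\circ\varphi=(\Phi_{\mathbb{G}}\ot\id)\circ\varphi$ applied to $S_{\alpha}$. The triangle \eqref{eq:1} commutes because $(\pi\ot\id)\circ\varphi_{A}=\varphi$ holds on each $S_{\alpha}$, hence on all of $O_{A}$; and $\pi$ is unique because the commuting triangle reads off $\pi(u_{\alpha,\beta})=w_{\alpha,\beta}$ from $\varphi(S_{\alpha})$, while these generate $\CAi$. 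If $\varphi$ is faithful then its matrix coefficients generate $\CG$; being the images under $\pi$ of the coefficients of $\varphi_{A}$, they force $\operatorname{ran}\pi$ to be dense, so the $*$-homomorphism $\pi$ is surjective.
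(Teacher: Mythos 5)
Your skeleton is the same as the paper's (expansions over $E_1$ and $E_2$, extraction of magic unitaries and the intertwining relation, then linearity, then Property (III) via Lemma \ref{lem:FP_theorem}, then the universal property), and your opening and closing steps are fine. But the two steps you yourself identify as the crux are exactly where the proposal has genuine gaps, and they are the steps on which the paper spends essentially all of its effort. For linearity, you propose to deduce $\beta$-independence of $d_{i,\alpha,\beta}$ algebraically from $d_{i,\alpha,\beta}d_{i,\alpha,\beta}^{*}=p_{i,\alpha}$, the magic-unitary structure and the orthogonality relations. This cannot work, because those relations admit $\beta$-dependent solutions realised by honest actions. Concretely, take $\CG=C(\T)$ and the coassociative, KMS-preserving action of $\T$ on $O_N$ given by $\varphi(S_i)=\sum_{\beta}z^{n_{\beta}}\ot S_iS_{\beta}S_{\beta}^{*}$ (twisting by the diagonal unitary $\sum_{\beta}z^{n_{\beta}}S_{\beta}S_{\beta}^{*}$, with distinct integers $n_\beta$); its unitary $U_{\varphi}$ preserves $\CG\ot E_1$ and $\CG\ot E_2$, and its coefficients $d_{i,\alpha,\beta}=\delta_{i,\alpha}z^{n_{\beta}}$ satisfy every identity you list (with $p=q$ the identity matrix), yet they depend on $\beta$. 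So no manipulation of those relations can "kill the dependence on $\beta$". The paper's linearity argument is of a different nature: it exploits invariance properties of $U_{\varphi}$ on the whole module $\CG\ot L^{2}(O_A,\tau)$ (preservation of the submodule spanned by the vectors $\rho(S_{\alpha}S_{\beta}^{*})\xi$, of the spaces $E_{\pm}$, and of each degree), i.e.\ analytic input beyond the relations you extract.

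For Property (III) the gap is similar. You want to feed Lemma \ref{lem:FP_theorem} with identities $vwv=wvwv$ obtained from idempotency of $\varphi(S_{\mu}S_{\mu}^{*})$, "an explicit combination of level-$n$ cylinder projections with coefficients that are words in the $p_{i,\alpha}$". That expansion is not available at this stage: linearity and multiplicativity only give $\varphi(S_{\mu}S_{\mu}^{*})=\sum_{\nu,\nu'}x_{\mu\nu}x_{\mu\nu'}^{*}\ot S_{\nu}S_{\nu'}^{*}$ with cross terms, and killing the cross terms is essentially the commutation statement being proved; in the paper the clean expansion \eqref{alphaSmu} is a \emph{consequence} of Property (III) (Proposition \ref{prop:comminv}), not an input. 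More decisively, no argument using only linearity, multiplicativity and idempotency can yield Property (III): all of these hold verbatim for the universal linear action of $\QG_A^{1}$ from Proposition \ref{prop:actions}, so such an argument would force $\QG_A^{1}=\QG_A^{\infty}$, contradicting the fact that for $A=\mathbf{1}$ the quantum groups $\QG_{\mathbf{1}}^{\ell}$ are pairwise distinct (Corollary \ref{cor:diff_Gl}). What the paper actually does is an induction on $\ell$: it evaluates $U_{\varphi}(1\ot|D|)U_{\varphi}^{*}$ on the vectors $f_{\beta,\beta,\nu}$, uses the explicit log-Laplacian eigenvalue coefficients $c_{\beta,\nu,j}$ and the nonvanishing of $\lambda_{\max}\mu(C(\beta)\setminus C(\beta\nu_1))-2$ to isolate the relation \eqref{eq:ind_14}, and only then applies Lemma \ref{lem:FP_theorem}. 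A further, smaller, gap: you assert the source matrix $q$ "is seen to be" a magic unitary, but $q$ is not the coefficient matrix of a coaction on $N$ points; the paper needs a separate argument here, deducing $\sum_{\beta}q_{\alpha,\beta}=1$ from the KMS property $\tau(S_{\delta}S_{\delta}^{*})=\lambda_{\max}^{-1}\tau(S_{\delta}^{*}S_{\delta})$ combined with preservation of the Perron--Frobenius eigenvector.
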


\begin{proof}
Fix an object $(\mathbb G,\varphi)$ in $Q_A(D)$.

\smallskip

\noindent \textbf{Preserving $E_1$:} We have that $U_{\varphi}$ preserves $\CG \otimes E_1$, where $E_1\subset L^2(O_A,\tau)$ is the finite dimensional subspace $$E_1=\langle\rho(S_{\beta}S_{\beta}^*)\xi: \beta \in V_A^1 \rangle.$$ Therefore, for every $\beta \in V_A^1$ we have
\begin{equation}\label{eq:3}
U_{\varphi}(1\otimes \rho(S_{\beta}S_{\beta}^*)\xi)=\sum_{\beta'} r_{\beta,\beta'} \otimes \rho(S_{\beta'}S_{\beta'}^*)\xi.
\end{equation}
Since $1\otimes D$ is selfadjoint it also commutes with $U^*_{\varphi}$, thus 
\begin{equation}\label{eq:4}
U_{\varphi}^*(1\otimes \rho(S_{\beta}S_{\beta}^*)\xi)=\sum_{\beta'} w_{\beta,\beta'} \otimes \rho(S_{\beta'}S_{\beta'}^*)\xi.
\end{equation}
In fact, one has $$w_{\beta,\beta'}= (r_{\beta',\beta})^*\frac{\tau(S_{\beta}S_{\beta}^*)}{\tau(S_{\beta'}S_{\beta'}^*)},$$ and from Lemma \ref{lem:preserveKMS_2} we also get $k^{-1}(r_{\beta,\beta'})=w_{\beta,\beta'}$.

\begin{remark} 
At first glance $r=(r_{\beta,\beta'})$ and $w=(w_{\beta,\beta'})$ are not necessarily unitary in $M_N(\CG)$ since the vectors $1\otimes \rho(S_{\beta'}S_{\beta'}^*)\xi \in C(\mathbb G)\otimes L^2(O_A,\tau)$ are not normalised. Instead, the matrices $\tilde{r}=(\tilde{r}_{\beta,\beta'})$ and $\tilde{w}=(\tilde{w}_{\beta,\beta'})$ where 
\begin{equation*}
\tilde{r}_{\beta,\beta'}:=r_{\beta,\beta'}\left(\frac{\tau(S_{\beta'}S_{\beta'}^*)}{\tau(S_{\beta}S_{\beta}^*)}\right)^{1/2}\qquad \text{and} \qquad \tilde{w}_{\beta,\beta'}:=w_{\beta,\beta'}\left(\frac{\tau(S_{\beta'}S_{\beta'}^*)}{\tau(S_{\beta}S_{\beta}^*)}\right)^{1/2}
\end{equation*}
are unitary and $\tilde{w}= \tilde{r}^*$. We will see though that $r,w$ are always unitary, as the `non-matching terms' must vanish (see also Proposition \ref{prop:equal_volume}).
\end{remark}

As a result, we obtain that 
\begin{equation}\label{eq:5}
\varphi(S_{\beta}S_{\beta}^*)=\sum_{\beta'=1}^N r_{\beta, \beta'}\otimes S_{\beta'}S_{\beta'}^*, \qquad
(k^{-1}\otimes 1)\varphi(S_{\beta}S_{\beta}^*)=\sum_{\beta'=1}^N w_{\beta, \beta'}\otimes S_{\beta'}S_{\beta'}^*
\end{equation}
As $\varphi$ is a $*$-homomorphism and $k^{-1}$ respects idempotents, the projections $S_{\beta'}S_{\beta'}^*$, $\beta'=1, \ldots,N$ are mutually orthogonal and $\sum_{\beta=1}^N\varphi(S_{\beta}S_{\beta}^*)=1\otimes 1,$ using \eqref{eq:5} we obtain that for $1\leq \alpha,\beta,\delta \leq N$,
\begin{enumerate}[(i)]
\item each $r_{\alpha, \beta}$ is a projection in $\CG$;
\item each $w_{\alpha, \beta}$ is a projection in $\CG$. In particular,
$$\text{if } \tau(S_{\alpha}S^*_{\alpha})\neq \tau(S_{\beta}S^*_{\beta})\,\,\, \text{then } r_{\alpha,\beta}=0.$$
\item $\sum_{\alpha}r_{\alpha, \beta}=\sum_{\beta}r_{\alpha, \beta}=1$. Hence, $r_{\alpha, \beta}r_{\delta, \beta}=0$ and $r_{\beta,\alpha}r_{\beta,\delta}=0$ if $\alpha \neq \delta$.
\end{enumerate}
Hence, the matrix $r=(r_{\alpha,\beta})_{1\leq \alpha,\beta \leq N}$ is a magic unitary and its adjoint is $w=(w_{\alpha,\beta})_{1\leq \alpha,\beta \leq N}$.

\smallskip

\noindent \textbf{Preserving $E_2$:} In the same spirit, the unitary $U_{\varphi}$ preserves $\CG \otimes E_2$, where $E_2\subset L^2(O_A,\tau)$ is the finite dimensional subspace $$E_2=\langle\rho(S_{\alpha}S_{\beta}S_{\beta}^*)\xi: \alpha, \beta \in V_A^1 \,\,\, \text{and } \alpha.\beta \in I_A\rangle.$$ Therefore, for every $\alpha, \beta \in V_A^1$ with $\alpha.\beta \in I_A$,
\begin{equation}\label{eq:6}
U_{\varphi}(1\otimes \rho(S_{\alpha}S_{\beta}S_{\beta}^*)\xi)=\sum_{\alpha'.\beta'} y_{\alpha.\beta,\alpha'.\beta'} \otimes \rho(S_{\alpha'}S_{\beta'}S_{\beta'}^*)\xi,
\end{equation}
where the sum is over all $\alpha'.\beta'\in I_A$. Similarly, we have 
\begin{equation}\label{eq:7}
U_{\varphi}^*(1\otimes \rho(S_{\alpha}S_{\beta}S_{\beta}^*)\xi)=\sum_{\alpha'.\beta'} z_{\alpha.\beta,\alpha'.\beta'} \otimes \rho(S_{\alpha'}S_{\beta'}S_{\beta'}^*)\xi
\end{equation}
and hence
\begin{equation}\label{eq:8}
z_{\alpha.\beta,\alpha'.\beta'}= (y_{\alpha'.\beta',\alpha.\beta})^*\frac{\tau(S_{\beta}S_{\beta}^*)}{\tau(S_{\beta'}S_{\beta'}^*)}\qquad \text{and}\qquad k^{-1}(y_{\alpha.\beta,\alpha'.\beta'})=z_{\alpha.\beta,\alpha'.\beta'}.
\end{equation}
Now, since for $\alpha, \beta \in V_A^1$ with $\alpha.\beta \not\in I_A$, $S_{\alpha}S_{\beta}S_{\beta}^*=0$, summing over all $1\leq \beta \leq N$, for every $1\leq \alpha \leq N$ we obtain
\begin{equation}\label{eq:9}
\varphi(S_{\alpha})=\sum_{\alpha'.\beta'} s_{\alpha' . \beta'}^{\alpha} \otimes S_{\alpha'}S_{\beta'}S_{\beta'}^*\qquad  \text{and} \qquad (k^{-1}\otimes 1)\varphi(S_{\alpha})=\sum_{\alpha'.\beta'} t_{\alpha' . \beta'}^{\alpha} \otimes S_{\alpha'}S_{\beta'}S_{\beta'}^*.
\end{equation}
This can be extended for $\alpha\in V_A\setminus \{{\o}\}$, by summing over all $\alpha'.\beta' \in I_A$ where $|\alpha'|=|\alpha|$, $\beta' \in V_A^{1}$ and $s_{\alpha' . \beta'}^{\alpha}= s_{\alpha_1'.\alpha_2'}^{\alpha_1}s_{\alpha_2'.\alpha_3'}^{\alpha_2}\ldots s_{\alpha'_{|\alpha'|}.\beta'}^{\alpha_{|\alpha|}}$ as well as $t_{\alpha' . \beta'}^{\alpha}=t_{\alpha'_{|\alpha'|}.\beta'}^{\alpha_{|\alpha|}}\ldots t_{\alpha_2'.\alpha_3'}^{\alpha_2}t_{\alpha_1'.\alpha_2'}^{\alpha_1}$, namely
\begin{equation}\label{eq:10}
\varphi(S_{\alpha})=\sum_{\alpha'.\beta'} s_{\alpha' . \beta'}^{\alpha} \otimes S_{\alpha'}S_{\beta'}S_{\beta'}^* \qquad \text{and} \qquad (k^{-1}\otimes 1)\varphi(S_{\alpha})=\sum_{\alpha'.\beta'} t_{\alpha' . \beta'}^{\alpha} \otimes S_{\alpha'}S_{\beta'}S_{\beta'}^*.
\end{equation}

\smallskip

\noindent \textbf{Linearity of co-action:} From \eqref{eq:10} and the fact that $\varphi$ is a $*$-homomorphism, it follows that $U_{\varphi}$ preserves $$E:=\bigvee_{m,n\geq 1}\langle \CG \otimes \rho(S_{\alpha}S^*_{\beta})\xi: \alpha\in V_A^m, \beta\in V_A^n,\alpha_{|\alpha|}=\beta_{|\beta|}\rangle, $$ where $E=\bigvee_{m,n\geq 1}\langle \CG \otimes \rho(S_{\alpha}S^*_{\beta})\xi: \alpha\in V_A^m, \beta\in V_A^n\rangle$ is a complemented right $\CG$-submodule of $\CG\otimes L^2(O_A,\tau)$. Similarly for $U_{\varphi}^*$. As a result, $U_{\varphi}$ preserves also the orthogonal complement of that subspace, which is $E_+\oplus E_-$, where 
\begin{align*}
E_+&:=\langle \CG \otimes \rho(S_{\alpha})\xi: \alpha\in V_A^m, \,\, m\geq 1\rangle\\
E_-&:= \langle \CG \otimes \rho(S^*_{\beta})\xi: \beta\in V_A^m,\,\, m\geq 1\rangle.
\end{align*}
Now we get that $\varphi$ should preserve $E_+$ and $E_-$ respectively, since from \eqref{eq:6} we see that $U_{\varphi}(E_{\pm})$ and $U_{\varphi}^*(E_{\pm})$ are orthogonal to $E_{\mp}$. Similarly, from orthogonality and \eqref{eq:6} we see that $U_{\varphi}$ and $U_{\varphi}^{*}$ preserve each degree $m$. Consequently, from \eqref{eq:9} we get that the coefficients $s_{\alpha'.\beta'}^{\alpha}$ for $\alpha',\beta'\in V_A^1$ with $\alpha'.\beta'\in I_A$ are independent of $\beta'$. In other words, for every $1\leq \alpha \leq N$ we obtain
\begin{equation}\label{eq:15}
\varphi(S_{\alpha})=\sum_{\alpha'} s_{\alpha,\alpha'} \otimes S_{\alpha'}\qquad \text{and} \qquad (k^{-1}\otimes 1)\varphi(S_{\alpha})=\sum_{\alpha'} t_{\alpha, \alpha'} \otimes S_{\alpha'}.
\end{equation} 

Since $(\Phi \otimes 1) \circ \varphi = (1\otimes \varphi) \circ \varphi$, for every $1\leq \alpha,\beta \leq N$ we then get 
$$\Phi(s_{\alpha,\beta})=\sum_{\delta=1}^Ns_{\alpha,\delta}\otimes s_{\delta,\beta}.$$ 
Also, from \eqref{eq:5} and \eqref{eq:15} we see that for every $1\leq \alpha, \beta \leq N$ one has 
\begin{equation}\label{eq:16}
s_{\alpha,\beta}(s_{\alpha,\beta})^*=r_{\alpha,\beta}.
\end{equation}
In particular, each $s_{\alpha,\beta}$ is a partial isometry. Also, if $\beta \neq \delta$ then 
\begin{equation}\label{eq:17}
\langle 1\otimes \rho (S_{\alpha}S_{\alpha}^*)\xi, U_{\varphi}(1\otimes \rho (S_{\beta}S_{\delta}^*)\xi) \rangle = 0.
\end{equation}
This follows by using the equality $U_{\varphi}^*(\CG\otimes E_1)=\CG\otimes E_1$ or the cases where  $S_{\beta}S_{\delta}^*=0$. Now, expanding \eqref{eq:17} we obtain
\begin{equation}\label{eq:18}
s_{\beta,\alpha}(s_{\delta,\alpha})^*=\tau(S_{\alpha}S_{\alpha}^*)^{-1} \langle 1\otimes \rho (S_{\alpha}S_{\alpha}^*)\xi, U_{\varphi}(1\otimes \rho (S_{\beta}S_{\delta}^*)\xi) \rangle = 0.
\end{equation}

Finally, for every $1\leq \alpha \leq N$ using \eqref{eq:5} we get 
$$\varphi (S_{\alpha}^* S_{\alpha})=\sum_{\beta,\delta=1}^N A_{\alpha,\beta}s_{\beta,\delta}(s_{\beta,\delta})^*\otimes S_{\delta}S_{\delta}^*$$ and writing $\varphi(S_{\alpha}^* S_{\alpha})=\varphi(S_{\alpha})^*\varphi(S_{\alpha})$ we get 
$$\varphi(S_{\alpha}^* S_{\alpha})=\sum_{\beta,\delta=1}^N(s_{\alpha,\beta})^*s_{\alpha,\beta} A_{\beta,\delta} \otimes S_{\delta}S_{\delta}^*.$$ Then, orthogonality between the $S_{\delta}S_{\delta}^*$ viewed as vectors in $L^2(O_A,\tau)$ implies that for every $1\leq \alpha,\delta \leq N$, 
$$\sum_{\beta=1}^N A_{\alpha,\beta}s_{\beta,\delta}(s_{\beta,\delta})^*= \sum_{\beta=1}^N(s_{\alpha,\beta})^*s_{\alpha,\beta} A_{\beta,\delta}.$$

\smallskip

\noindent \textbf{Factoring through $\QG_A^{1}$-action (Properties I and II):} So far we have shown that for every $1\leq \alpha,\beta, \delta \leq N$ the elements $s_{\alpha,\beta}$ are partial isometries with range projections $r=(r_{\alpha,\beta})_{1\leq \alpha,\beta \leq N}$ and source projections $\sigma=(\sigma_{\alpha,\beta})_{1\leq \alpha,\beta \leq N}$ such that for all $\alpha, \beta, \delta = 1,\ldots,N$
\begin{enumerate}[(P1)]
\item if $\beta\neq \delta$ then $s_{\beta,\alpha}(s_{\delta,\alpha})^*=0;$
\item if $\tau(S_{\alpha}S_{\alpha}^*)\neq \tau(S_{\beta}S_{\beta}^*)$ then $s_{\alpha,\beta}=0$;
\item $\sum_{\kappa=1}^N r_{\kappa,\alpha}=\sum_{\kappa=1}^N r_{\alpha,\kappa}=1$;
\item $A$ intertwines $r$ and $\sigma$; i.e. $$\sum_{\kappa=1}^N A_{\alpha, \kappa}r_{\kappa, \delta}=\sum_{\kappa=1}^N \sigma_{\alpha, \kappa} A_{\kappa, \delta}.$$
\end{enumerate}
Property (P4) is exactly the property II from Definition \ref{defn: Ariadne_0}. For property I, observe that (P3) implies that $r$ is a magic unitary, which preserves the right Perron--Frobenius eigenvector $\vec{u}$  due to (P2). Then, due to (P4), $\sigma$ also preserves $\vec{u}$. However, showing that $\sigma$ is a magic unitary is not straightforward. To this end, we shall prove that the matrix $s= (s_{\alpha,\beta})_{1\leq \alpha,\beta \leq N}$ and its conjugate $\overline{s}=((s_{\alpha, \beta})^*)_{1\leq \alpha,\beta \leq N}$ are unitaries. 

We first claim that for every $\alpha =1, \ldots, N$ we have
\begin{equation}\label{eq:conj_unitary_1}
\sum_{\beta=1}^N \sigma_{\alpha,\beta}=1,
\end{equation}
To prove it first note that $s_{\alpha,\beta}=0$ if and only if $r_{\alpha,\beta}=0$ if and only if $\sigma_{\alpha,\beta}=0$. Now for every $1\leq \alpha,\delta \leq N$ we have 
$$\sum_{\beta=1}^N A_{\alpha,\beta} \frac{\tau(S_{\beta}S_{\beta}^*)}{\tau(S_{\delta}S_{\delta}^*)} r_{\beta,\delta}=  \sum_{\beta=1}^N A_{\alpha, \beta}r_{\beta, \delta}=\sum_{\beta=1}^N \sigma_{\alpha, \beta} A_{\beta, \delta}.$$ Multiplying both sides by $\tau(S_{\delta}S_{\delta}^*)$ gives 
$$\sum_{\beta=1}^N A_{\alpha,\beta} \tau(S_{\beta}S_{\beta}^*)r_{\beta,\delta}= \sum_{\beta=1}^N \sigma_{\alpha, \beta} A_{\beta, \delta}\tau(S_{\delta}S_{\delta}^*).$$ 
Summing over all $\delta\in \{1, \ldots, N\}$ and using (P3) gives 
$$\sum_{\beta=1}^N A_{\alpha,\beta} \tau(S_{\beta}S_{\beta}^*)=\sum_{\beta=1}^N \sigma_{\alpha, \beta} \left(\sum_{\delta} A_{\beta, \delta}\tau(S_{\delta}S_{\delta}^*)\right),$$ 
which is equivalent to 
$$\tau(S_{\alpha}^*S_{\alpha})=\sum_{\beta=1}^N \sigma_{\alpha,\beta}\tau(S_{\beta}^*S_{\beta}).$$ As a result, $$\sum_{\beta=1}^N \sigma_{\alpha,\beta} \frac{\tau(S_{\beta}^*S_{\beta})}{\tau(S_{\alpha}^*S_{\alpha})}=1.$$ 
From (P2), observe that if $\alpha, \beta \in \{1, \ldots,N\}$ and $\sigma_{\alpha,\beta}\neq 0$ then $\tau(S_{\alpha}S_{\alpha}^*)=\tau(S_{\beta}S_{\beta}^*)$, and from the KMS property of $\tau$ we have $\tau(S_{\delta}S_{\delta}^*)=\lambda_{\max}^{-1}\tau(S_{\delta}^*S_{\delta}),$ for any $1\leq \delta \leq N$. Therefore, 
$$\sum_{\beta=1}^N\sigma_{\alpha,\beta}=\sum_{\beta=1}^N \sigma_{\alpha,\beta} \frac{\tau(S_{\beta}^*S_{\beta})}{\tau(S_{\alpha}^*S_{\alpha})}=1.$$ 
In particular, if $\beta\neq \delta$ we have $\sigma_{\alpha,\beta}\sigma_{\alpha,\delta}=0$, hence 
\begin{equation}\label{eq:conj_unitary_2}
s_{\alpha,\beta}(s_{\alpha,\delta})^*=0.
\end{equation}

Now from \eqref{eq:conj_unitary_2} and (P3) we obtain $(\overline{s})^*\overline{s}=1$, hence $\overline{s}$ is unitary. Finally, since $\overline{s}$ is unitary, then $s$ is invertible. Also, from (P1) and (P3) we get that $ss^*=1$. Therefore, $s$ is unitary as well.

All these properties for $s=(s_{\alpha,\beta})_{1\leq \alpha,\beta \leq N}$ imply that the assignment $u_{\alpha,\beta}\mapsto s_{\alpha,\beta}$ extends to a $*$-homomorphism $\pi:C(\QG_A^1)\to \CG$ such that $(\pi \otimes \pi)\circ \Phi_A = \Phi \circ \pi.$ It clearly holds that $(\pi \otimes 1)\circ \varphi_A =\varphi$ and if $\pi':C(\QG_A^1) \to \CG$ is another $*$-homomorphism such that $(\pi' \otimes \pi')\circ \Phi_A = \Phi \circ \pi'$, then for every $1\leq \alpha \leq N$ we have
 $$\sum_{\beta=1}^N \pi(u_{\alpha,\beta})\otimes S_{\beta}=\sum_{\beta=1}^N \pi'(u_{\alpha,\beta})\otimes S_{\beta}.$$ Therefore, from the orthogonality of all $S_{\beta}$ viewed as vectors in $L^2(O_A,\tau)$, we conclude that $\pi=\pi'$.

\smallskip

\noindent \textbf{Factoring through $\GAi$-action (Property III):} At this point we should note that since the $\mathbb G$-action factors through the $\QG_A^1$-action, the antipode $k$ of $\mathbb G$ satisfies $$k(s_{\alpha,\beta})= (s_{\beta,\alpha})^*.$$ 
Moreover, property III is tightly connected to the underlying dynamics and hence it will be more convenient to work with the groupoid picture of $O_A$. Namely, the operators $1\otimes D, U_{\varphi},U_{\varphi}^*$ will now act on $\LC \subset L^2(\Gamma_A,\mu_{\Gamma_A}).$ First we will show that properties I and II imply that $U_{\varphi}$ commutes with $1\otimes P_A$.

Recall that the collection $\rho(S_{\alpha \beta}S_{\beta}^*)\xi=\chi_{\Gamma_{\alpha.\beta}}$ with $\alpha.\beta \in \tilde{V}_A$ (recall the notation introduced in \eqref{eq:Fock}) spans a dense subset of $F_A=\textup{Im}(P_A)$. Moreover, we have that for $\alpha, \beta$ as above
\begin{equation}\label{eq:19}
U_{\varphi}(1\otimes \rho(S_{\alpha \beta}S_{\beta}^*)\xi) = \sum_{\alpha ' ,\beta'} s_{\alpha \beta, \alpha'}(s_{\beta,\beta'})^* \otimes \rho(S_{\alpha'} S_{\beta'}^*)\xi,
\end{equation}
where the sum is over $\alpha', \beta'  \in V_A$ with $|\alpha'|=|\alpha|+1$, $|\beta'|=1$. Also, $\alpha'_{|\alpha'|}=\beta'$ since the rightmost product in the expansion of the coefficients is $s_{\beta, \alpha'_{|\alpha'|}}(s_{\beta,\beta'})^*.$ In particular, for every such $\alpha',\beta'$, $$\rho(S_{\alpha'} S_{\beta'}^*)\xi = \chi_{\Gamma_\gamma},$$ with $\gamma=\alpha_1'\ldots \alpha_{|\alpha'|-1}'.\beta' \in \widetilde{V_A}$. As a result, $U_{\varphi}(\CG \otimes F_A) \subset \CG \otimes F_A$. Since the same is true for $U_{\varphi}^*$ we get $U_{\varphi}(\CG \otimes F_A) = \CG \otimes F_A$, in other words $U_{\varphi}$ commutes with $1\otimes P_A$. From \eqref{eq:19} it also follows that $U_{\varphi}$ commutes with $1\otimes M_{L}P_A$. 

We are now ready to derive property III by induction on the length of admissible words. Namely, we know that the $\mathbb G$-action factors through the $\QG_A^1$-action. Assuming that it factors through the $\GA$-action, we will show that it factors through the $\QG_A^{\ell+1}$-action. This amounts to showing that for all $\alpha,\beta\in V_A^{1}$ and $\delta,\nu\in V_A^{\ell}$ with $\alpha \delta,\beta \nu \in V_A$ it holds that
\begin{equation}\label{eq:ind_1}
\sigma_{\alpha,\beta}r_{\delta,\nu}= r_{\delta,\nu}\sigma_{\alpha,\beta}.
\end{equation}

To derive \eqref{eq:ind_1} we proceed as follows: for $\alpha,\beta\in V_A^{1}$ and $\nu \in V_A^{\ell}$ with $\alpha \nu, \beta \nu \in V_A$ denote $f_{\alpha,\beta,\nu}:= \chi_{\alpha \nu} \star \chi_{\beta \nu}^*$. From Remark \ref{rem: char_functions}, we obtain that 
$$ f_{\alpha,\beta,\nu}=
\begin{cases}
\chi_{s^{-1}_{{\o}.\beta}(C(\beta \nu))}, &\text{if } \alpha=\beta,\\
\chi_{s^{-1}_{\alpha.\beta \nu_1}(C(\beta \nu))}, &\text{if } \alpha \neq \beta .
\end{cases}
$$
Note that if $(\alpha,\beta,\nu) \neq (\alpha',\beta',\nu')$ then $f_{\alpha, \beta, \nu}$ is orthogonal to $f_{\alpha', \beta', \nu'}$ in $L^2(\Gamma_A,\mu_{\Gamma_A})$. From the induction step we have that $U_{\varphi}$ preserves $\CG \otimes \langle f_{\alpha,\beta,\nu}\rangle,$ see \eqref{eq:orthogonality2} in Lemma \ref{lem: words_of_s}. Also, we know that $ U_{\varphi}$ commutes with $1\otimes D$ so with $1\otimes |D|$ as well, where $|D|=\Delta + M_{L}$. 

In particular, for $\beta\in V_A^{1}$ and $\nu\in V_A^{\ell}$ with $\beta \nu \in V_A$ we have 
\begin{equation}\label{eq:ind_2}
U_{\varphi}(1\otimes |D|)U_{\varphi}^*(1\otimes f_{\beta,\beta, \nu})= (1\otimes |D|)(1\otimes f_{\beta,\beta,\nu}).
\end{equation}
We shall expand \eqref{eq:ind_2} to derive new algebraic conditions. First, it is clear that
\begin{equation}\label{eq:ind_3}
M_{L}(f_{\alpha,\beta,\nu})=
\begin{cases}
f_{\alpha,\beta,\nu}, &\text{if } \alpha=\beta,\\
3f_{\alpha,\beta,\nu}, &\text{if } \alpha \neq \beta.
\end{cases}
\end{equation}
Moreover, set $\check{\nu}:=\nu_2\ldots \nu_{|\nu|}$, which is assumed to be the empty word when $|\nu|=1$. Then, from \eqref{eq:G_A_isom_3} we have that 
\begin{equation}\label{eq:ind_4}
\Delta(f_{\alpha,\beta,\nu})=
\begin{cases}
\sum_{j}c_{\beta,\nu,j}f_{\beta,\beta,j}, &\text{if } \alpha=\beta,\\
\sum_{j'}c_{\beta \nu_1,\check{\nu},j'}f_{\alpha \nu_1,\beta \nu_1,j'}, &\text{if } \alpha \neq \beta,
\end{cases}
\end{equation}
where the sums are over all $j,j' \in V_A^{|\beta|}$ such that $\beta j, \beta j' \in V_A$.

Eventually, from \eqref{eq:ind_3} and \eqref{eq:ind_4} we obtain that 
\begin{equation}\label{eq:ind_5}
(1\otimes |D|)(1\otimes f_{\beta,\beta,\nu})=(c_{\beta,\nu,\nu}+1)\otimes f_{\beta,\beta,\nu}+\sum_{j\neq \nu}c_{\beta,\nu,j}\otimes f_{\beta,\beta,j}.
\end{equation}
Further, to calculate $U_{\varphi}(1\otimes |D|)U_{\varphi}^*(1\otimes f_{\beta,\beta,\nu})$ note that $$U_{\varphi}^*(1\otimes f_{\beta,\beta,\nu})=\sum_{\beta',\beta'',\nu'}s_{\beta'',\beta}r_{\nu',\nu}(s_{\beta',\beta})^*\otimes f_{\beta',\beta'',\nu'}.$$
Then, we write $(1\otimes |D|)U_{\varphi}^*(1\otimes f_{\beta,\beta,\nu})=\Sigma_1+\Sigma_2+\Sigma_3+\Sigma_4$, where 
\begin{align*}
\Sigma_1&=3\sum_{\beta'\neq \beta'',\nu'}s_{\beta'',\beta}r_{\nu',\nu}(s_{\beta',\beta})^*\otimes f_{\beta',\beta'',\nu'}\\
\Sigma_2&=\sum_{\beta'\neq \beta'',\nu',j'}c_{\beta'' \nu_1',\check{\nu'},j'}s_{\beta'',\beta}r_{\nu',\nu}(s_{\beta',\beta})^*\otimes f_{\beta',\beta'' ,\nu_1'j'}\\
\Sigma_3&=\sum_{\beta', \nu'\neq j} c_{\beta',\nu',j}s_{\beta',\beta}r_{\nu',\nu}(s_{\beta',\beta})^*\otimes f_{\beta',\beta',j}\\
\Sigma_4&=\sum_{\beta',\nu'}(c_{\beta',\nu',\nu'}+1)s_{\beta',\beta}r_{\nu',\nu}(s_{\beta',\beta})^*\otimes f_{\beta',\beta',\nu'}.
\end{align*}
Now applying $U_{\varphi}$ we get 
\begin{align}\label{eq:ind_6}
U_{\varphi}(\Sigma_1)&=3\sum_{i',m'}\left[ \sum_{\beta'\neq \beta'',\nu'} s_{\beta',i'}r_{\nu',m'}\sigma_{\beta'',\beta}r_{\nu',\nu}(s_{\beta',\beta})^* \right]\otimes f_{i',\beta,m'}\\
\nonumber
&=3\sum_{i',m'}\left[ \sum_{\beta',\nu'} s_{\beta',i'}r_{\nu',m'}(1-\sigma_{\beta',\beta})r_{\nu',\nu}(s_{\beta',\beta})^* \right]\otimes f_{i',\beta,m'}\\
\nonumber
&=3\otimes f_{\beta,\beta,\nu} - 3\sum_{i',m'}\left[ \sum_{\beta',\nu'} s_{\beta',i'}r_{\nu',m'}\sigma_{\beta',\beta}r_{\nu',\nu}(s_{\beta',\beta})^* \right]\otimes f_{i',\beta,m'}.
\end{align}

Moreover, we obtain
\begin{equation}\label{eq:ind_7}
U_{\varphi}(\Sigma_2)=\sum_{\beta'\neq \beta'',\nu',j',i',m'}c_{\beta'' \nu_1',\check{\nu'},j'}s_{\beta',i'}r_{\nu_1'j',m'}\sigma_{\beta'',\beta}r_{\nu',\nu}(s_{\beta',\beta})^* \otimes f_{i',\beta,m'}.
\end{equation} 
The sum \eqref{eq:ind_7} is zero if $\ell=1$, since the scalar coefficients $c_{\beta''\nu_1',{\o},{\o}}$ vanish. For $\ell\geq 2$ the proof is more involved. The easiest observation is that the non-zero terms have $m_1'=\nu_1$. We can decompose $U_{\varphi}(\Sigma_2)$ into 
$$U_{\varphi}(\Sigma_2)[j'=\check{\nu'}]+U_{\varphi}(\Sigma_2)[j'\neq \check{\nu'},m'\neq \nu]+U_{\varphi}(\Sigma_2)[j'\neq \check{\nu'}, m'=\nu],$$ 
where the indices denote the restrictions. Any non-zero term in $U_{\varphi}(\Sigma_2)[j'=\check{\nu'}]$ has $\sigma_{\beta'',\beta}r_{\nu',\nu}\neq 0$. In particular, $s_{\beta'',\beta}r_{\nu',\nu}\neq 0$ and hence 
$$\mu(C(\beta'' \nu'))=\mu(C(\beta \nu)).$$ 
This is derived from $\langle U_{\varphi}(1\otimes \chi_{\nu'}\star \chi_{\beta'' \nu'}^*), 1\otimes \chi_{\nu}\star \chi_{\beta \nu}^* \rangle = \langle 1\otimes \chi_{\nu'}\star \chi_{\beta'' \nu'}, U_{\varphi}^*(1\otimes \chi_{\nu}\star \chi_{\beta \nu}^*)\rangle$, using the specific formula \eqref{eq:KMS} for the KMS state $\tau$. Moreover, since $s_{\beta'',\beta} s_{\nu_1',\nu_1}\ldots s_{\nu_n',\nu_n}$ is a non-zero partial isometry for every $1\leq n\leq \ell-1$, working as in Proposition \ref{prop:equal_volume} we get that
$$\mu(C(\beta''\nu_1'\ldots \nu_n'))=\mu(C(\beta \nu_1\ldots \nu_n)).$$ 
This means that $c_{\beta'' \nu_1',\check{\nu'},\check{\nu'}}$ can be replaced with $c_{\beta \nu_1, \check{\nu}, \check{\nu}}.$ As a result, we have 
\begin{align}\label{eq:ind_8}
U_{\varphi}(\Sigma_2)[j'=\check{\nu'}]&=\sum_{\beta'\neq \beta'',\nu',i',m'}c_{\beta \nu_1, \check{\nu}, \check{\nu}}s_{\beta',i'}r_{\nu',m'}\sigma_{\beta'',\beta}r_{\nu',\nu}(s_{\beta',\beta})^* \otimes f_{i',\beta,m'}\\
\nonumber
&=\sum_{\beta',\nu',i',m'}c_{\beta \nu_1, \check{\nu}, \check{\nu}}s_{\beta',i'}r_{\nu',m'}(1-\sigma_{\beta',\beta})r_{\nu',\nu}(s_{\beta',\beta})^* \otimes f_{i',\beta,m'}\\
\nonumber
&=c_{\beta \nu_1, \check{\nu}, \check{\nu}}\otimes f_{\beta,\beta,\nu}-\sum_{\beta',\nu',i',m'}c_{\beta \nu_1, \check{\nu}, \check{\nu}}s_{\beta',i'}r_{\nu',m'}\sigma_{\beta',\beta}r_{\nu',\nu}(s_{\beta',\beta})^* \otimes f_{i',\beta,m'}.
\end{align}
At this point, from \eqref{eq:eigenvalues} we see that 
$$c_{\beta \nu_1,\check{\nu},\check{\nu}}= c_{\beta,\nu,\nu}-\lambda_{\max}\mu(C(\beta)\setminus C(\beta \nu_1)).$$

Now for any non-zero term in $U_{\varphi}(\Sigma_2)[j'\neq \check{\nu'},m'\neq \nu]$ we have $r_{\nu_1'j',m'}\sigma_{\beta'',\beta}r_{\nu',\nu}\neq 0$ and so 
$$(s_{\beta''\nu_1'\hat{j'},\beta \hat{m'}})^*s_{\beta''\hat{\nu'},\beta \hat{\nu}}\neq 0.$$ 
As the indices have length $\ell$, from Lemma \ref{lem: words_of_s} we obtain that $|\beta''\nu_1'\hat{j'} \wedge \beta''\hat{\nu'} |= |\beta \hat{m'} \wedge \beta \hat{\nu}|,$ and given that $j'\neq \check{\nu'}, m'\neq \nu$ this means that $$|\beta''\nu_1'j' \wedge \beta''\nu' |= |\beta m' \wedge \beta \nu|.$$ 
Again we have $s_{\beta'',\beta}r_{\nu',\nu}\neq 0$ and hence $\mu(C(\beta'' \nu'))=\mu(C(\beta \nu)).$ As a result, we can replace $c_{\beta'' \nu_1',\check{\nu'},j'}$ by $c_{\beta \nu_1, \check{\nu},\check{m'}}.$ Consequently, 
\begin{align}\label{eq:ind_9}
U_{\varphi}(\Sigma_2)&[j'\neq \check{\nu'},m'\neq \nu]
\\\nonumber
&=\sum_{\beta'\neq \beta'',\nu', j'\neq \check{\nu'},i',m'\neq \nu}c_{\beta \nu_1, \check{\nu},\check{m'}}s_{\beta',i'}r_{\nu_1'j',m'}\sigma_{\beta'',\beta}r_{\nu',\nu}(s_{\beta',\beta})^* \otimes f_{i',\beta,m'}\\
\nonumber
&= \sum_{\beta'\neq \beta'',\nu',i',m'\neq \nu}c_{\beta \nu_1, \check{\nu},\check{m'}}s_{\beta',i'}(r_{\nu_1',m_1'}-r_{\nu',m'})\sigma_{\beta'',\beta}r_{\nu',\nu}(s_{\beta',\beta})^* \otimes f_{i',\beta,m'}\\
\nonumber
{}^{(\ast)}&= - \sum_{\beta'\neq \beta'',\nu',i',m'\neq \nu}c_{\beta \nu_1, \check{\nu},\check{m'}}s_{\beta',i'}r_{\nu',m'}\sigma_{\beta'',\beta}r_{\nu',\nu}(s_{\beta',\beta})^* \otimes f_{i',\beta,m'}\\
\nonumber
&=-\sum_{\beta',\nu',i',m'\neq \nu}c_{\beta \nu_1, \check{\nu},\check{m'}}s_{\beta',i'}r_{\nu',m'}(1-\sigma_{\beta',\beta})r_{\nu',\nu}(s_{\beta',\beta})^* \otimes f_{i',\beta,m'}\\
\nonumber
{}^{(\ast \ast)}&=-\sum_{\beta',\nu',i',m'\neq \nu} \mu (C(\nu_{|\nu|})) s_{\beta',i'}r_{\nu',m'}\sigma_{\beta',\beta}r_{\nu',\nu}(s_{\beta',\beta})^* \otimes f_{i',\beta,m'}.
\end{align}
To derive ($\ast$) note that the sum involving $r_{\nu_1',m_1'}$ but not $r_{\nu',m'}$ is zero. Indeed, first observe that $r_{\nu_1',m_1'}$ commutes with $\sigma_{\beta'',\beta}$ as $\ell\geq 2$. Then, $r_{\nu_1',m_1'}$ gets absorbed into $r_{\nu',\nu}$. Then, considering the equality $\sum_{\nu'} r_{\nu',\nu}=1$, brings $\sigma_{\beta'',\beta}$ next to $(s_{\beta',\beta})^*$ which gives zero. Now to derive ($\ast \ast$) observe that the non-zero terms have $r_{\nu',m'}\sigma_{\beta',\beta}r_{\nu',\nu}\neq 0$. In particular, $(s_{\beta' \hat{\nu'},\beta \hat{m'}})^*s_{\beta'\hat{\nu'},\beta \hat{\nu}}\neq 0$. This means that 
$$|\beta \hat{m'} \wedge \beta \hat{\nu}| = |\beta' \hat{\nu'} \wedge \beta' \hat{\nu'}|=\ell,$$ 
so in other words $\hat{m'}=\hat{\nu}$. As $m'\neq \nu$ we obtain $m'_{|m'|}\neq \nu_{|\nu|}.$ Consequently, 
\begin{align*}
c_{\beta \nu_1, \check{\nu},\check{m'}}&=-|\beta \nu_1 \check{m'} \wedge \beta \nu_1 \check{\nu}|_{\lambda}^{-\df} \mu (C(\beta \nu_1 \check{\nu}))= -|\beta m' \wedge \beta \nu|_{\lambda}^{-\df} \mu (C(\beta \nu))=-\lambda_{\max}^{\ell}\mu (C(\beta \nu)) \\
&=- \mu (C(\nu_{|\nu|})).
\end{align*}

Moreover, from any non-zero term in $U_{\varphi}(\Sigma_2)[j'\neq \check{\nu'}, m'=\nu]$, working as before, we obtain $$|\beta''\nu_1'\hat{j'} \wedge \beta''\hat{\nu'}|= |\beta \hat{\nu} \wedge \beta \hat{\nu}|=\ell,$$ and since $|\beta''\nu_1'\hat{j'}|=|\beta''\hat{\nu'}|=\ell$ we get $\beta''\nu_1'\hat{j'}=\beta''\hat{\nu'}.$ Also, $j'\neq \check{\nu'}$ and so
\begin{align*}
c_{\beta'' \nu_1',\check{\nu'},j'}&=-|\beta''\nu_1'j' \wedge \beta''\nu' |_{\lambda}^{-\df} \mu(C(\beta'' \nu'))=-\lambda_{\max}^{\ell}\mu(C(\beta \nu))=-\mu(\sigma_A^{\ell}(C(\beta \nu)))\\
&=-\mu(C(\nu_{|\nu|})).
\end{align*}
With similar computations we obtain
\begin{align}\label{eq:ind_10}
U_{\varphi}(\Sigma_2)[j'\neq \check{\nu'}, m'=\nu]&=\mu(C(\nu_{|\nu|}))\otimes f_{\beta,\beta,\nu}\\ 
\nonumber
&- \sum_{\beta',\nu',i'}\mu(C(\nu_{|\nu|})) s_{\beta',i'}r_{\nu',\nu}\sigma_{\beta',\beta}r_{\nu',\nu}(s_{\beta',\beta})^*\otimes f_{i',\beta,\nu}.
\end{align}

Furthermore, we have
\begin{equation}\label{eq:ind_11}
U_{\varphi}(\Sigma_3)=\sum_{\beta',\nu'\neq j,i',m'}c_{\beta',\nu',j} s_{\beta',i'}r_{j,m'}\sigma_{\beta',\beta}r_{\nu',\nu}(s_{\beta',\beta})^*\otimes f_{i',\beta,m'},
\end{equation}
for which 
\begin{align*}
U_{\varphi}(\Sigma_3)[m'\neq \nu]&=\sum_{m'\neq \nu}c_{\beta,\nu,m'}\otimes f_{\beta,\beta,m'}\\
&+ \sum_{\beta',\nu',i',m'\neq \nu} \mu(C(\nu_{|\nu|})) s_{\beta',i'}r_{\nu',m'}\sigma_{\beta',\beta}r_{\nu',\nu}(s_{\beta',\beta})^*\otimes f_{i',\beta,m'}
\end{align*}
and 
\begin{align*}
U_{\varphi}(\Sigma_3)[m'=\nu]&= - \mu(C(\nu_{|\nu|})) f_{\beta,\beta,\nu}\\
&+ \sum_{\beta',\nu',i'}\mu(C(\nu_{|\nu|})) s_{\beta',i'}r_{\nu',\nu}\sigma_{\beta',\beta}r_{\nu',\nu}(s_{\beta',\beta})^*\otimes f_{i',\beta,\nu}.
\end{align*}

In addition, with similar methods one can show that
\begin{equation}\label{eq:ind_12}
U_{\varphi}(\Sigma_4)=\sum_{\beta',\nu',i',m'}(c_{\beta,\nu,\nu}+1)s_{\beta',i'}r_{\nu',m'}\sigma_{\beta',\beta}r_{\nu',\nu}(s_{\beta',\beta})^*\otimes f_{i',\beta,m'}.
\end{equation}
Thus several cancellations take place in the computation for $U_{\varphi}(1\otimes |D|)U_{\varphi}^*(1\otimes f_{\beta,\beta,\nu}).$ In fact, we have shown that 
\begin{align}\label{eq:ind_13}
U_{\varphi}(1\otimes |D|)U_{\varphi}^*(1\otimes f_{\beta,\beta,\nu})&= (3 + c_{\beta,\nu,\nu}-\lambda_{\max}\mu(C(\beta)\setminus C(\beta \nu_1)))f_{\beta,\beta,\nu} + \sum_{m'\neq \nu}c_{\beta,\nu,m'}\otimes f_{\beta,\beta,m'}\\
\nonumber
&+ (\lambda_{\max}\mu(C(\beta)\setminus C(\beta \nu_1))-2)\sum_{\beta',\nu',i',m'}s_{\beta',i'}r_{\nu',m'}\sigma_{\beta',\beta}r_{\nu',\nu}(s_{\beta',\beta})^*\otimes f_{i',\beta,m'}.
\end{align}
Equating this with \eqref{eq:ind_5} gives that, if $(i',m')\neq (\beta,\nu)$ then 
\begin{equation}\label{eq:ind_14}
\sum_{\beta',\nu'}s_{\beta',i'}r_{\nu',m'}\sigma_{\beta',\beta}r_{\nu',\nu}(s_{\beta',\beta})^*=0,
\end{equation}
since $\lambda_{\max}\mu(C(\beta)\setminus C(\beta \nu_1))-2\neq 0$. The latter fact follows from noticing that
\begin{align*}
\lambda_{\max}\mu(C(\beta)\setminus C(\beta \nu_1))&\leq \lambda_{\max} \mu(C(\beta))=\sum_{j}\lambda_{\max} \mu (C(\beta j))=\sum_j \mu(\sigma_A(C(\beta j)))\\
&\leq \sum_j \mu (C(j))=1.
\end{align*}

We now claim that \eqref{eq:ind_14} implies that for all $\alpha,\beta\in V_A^{1}$ and $\delta,\nu\in V_A^{\ell}$ with $\alpha \delta,\beta \nu \in V_A$, we have
\begin{equation}\label{eq:ind_15}
\sigma_{\alpha,\beta}r_{\delta,\nu}\sigma_{\alpha,\beta}= r_{\delta,\nu}\sigma_{\alpha,\beta}r_{\delta,\nu}\sigma_{\alpha,\beta},
\end{equation}
which in turn, following Lemma \ref{lem:FP_theorem}, means that $$\sigma_{\alpha,\beta}r_{\delta,\nu}=r_{\delta,\nu}\sigma_{\alpha,\beta},$$ and so the $\mathbb G$-action factors through the $\QG_A^{\ell+1}$-action. Indeed, multiplying \eqref{eq:ind_14} by $(s_{\alpha,i'})^*$ (where $i' \in V_A^1$) from the left, and by $s_{\alpha,\beta}$ from the right, we obtain that
\begin{equation}\label{eq:ind_17}
\sigma_{\alpha,i'}\left[\sum_{\nu'}r_{\nu',m'}\sigma_{\alpha,\beta}r_{\nu',\nu}\right] \sigma_{\alpha,\beta}=0.
\end{equation}
In particular, for $m'\neq \nu$ we can sum \eqref{eq:ind_17} over all $i'\in \{1,\ldots,N\}$ and obtain that 
\begin{equation}\label{eq:ind_18}
\sum_{\nu'}r_{\nu',m'}\sigma_{\alpha,\beta}r_{\nu',\nu} \sigma_{\alpha,\beta}=0.
\end{equation}
Multiplying \eqref{eq:ind_18} by $r_{\delta,m'}$ from the left gives 
\begin{equation}\label{eq:ind_19}
r_{\delta,m'}\sigma_{\alpha,\beta}r_{\delta,\nu} \sigma_{\alpha,\beta}=0.
\end{equation}
Then, we have
\begin{align*}
\sigma_{\alpha,\beta}r_{\delta,\nu}\sigma_{\alpha,\beta}&=r_{\delta,\nu}\sigma_{\alpha,\beta}r_{\delta,\nu}\sigma_{\alpha,\beta}+(1-r_{\delta,\nu})\sigma_{\alpha,\beta}r_{\delta,\nu}\sigma_{\alpha,\beta} = r_{\delta,\nu}\sigma_{\alpha,\beta}r_{\delta,\nu}\sigma_{\alpha,\beta}+\sum_{m'\neq \nu} r_{\delta,m'}\sigma_{\alpha,\beta}r_{\delta,\nu} \sigma_{\alpha,\beta}\\
&=r_{\delta,\nu}\sigma_{\alpha,\beta}r_{\delta,\nu}\sigma_{\alpha,\beta}.
\end{align*}
This establishes the inductive step and the proof of Theorem \ref{thm:factoring} is now complete.

\end{proof}

\section{Properties of quantum isometry groups}\label{sec:properties}

In this concluding section\footnote{In this section we will often omit the `comma' from $x_{i,j}$ and simply write $x_{ij}$ when the meaning is evident.}, we will study more closely the compact quantum groups introduced in Definition \ref{defn: Ariadne_0}. Even though they are -- to the best of our knowledge -- new objects, they have connections to important families of examples which have been intensively studied in the quantum group literature, like quantum automorphism groups of graphs and easy quantum groups. We will highlight these connections and show how they can, for instance, be used to study ergodicity of the action of the quantum isometry group on $O_{A}$.

\subsection{Classical versions}
\label{Sec:classical}

Given a compact quantum group $\QG$ its \emph{classical version} $\QGc$ is a compact group such that $C(\QGc):=C(\QG)/J_{c}$, where $J_c$ denotes the commutator ideal of $C(\QG)$, with the coproduct of $C(\QGc)$ induced from that of $C(\QG)$. General considerations show that if $\QG$ is the quantum isometry group of a given structure, then $\QGc$ is the corresponding classical isometry group (see for example \cite{Adam}). Thus we obtain immediately the following proposition.

\begin{prop}
The classical version of the quantum groups $\QG_A^{\ell}$ (for $\ell \in \N \cup \{\infty\}$) is the group $\T \wr \Aut(A)$.	
\end{prop}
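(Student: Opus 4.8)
The plan is to reduce everything to a single classical group and then identify that group. First I would observe that the classical version does not depend on $\ell$. Indeed, for $\ell\leq \ell'$ the quotient map $\CA\to C(\QG_A^{\ell'})$ descends to a surjection of abelianisations, so it suffices to show that the relations defining $\CAi$ already hold in the abelianisation of $C(\QG_A^1)$. Conditions (I) and (II) of Definition \ref{defn: Ariadne_0} are algebraic identities and so survive passage to any quotient; the only extra content at higher levels is condition (III), the requirement that the products $u_{\alpha,\beta}=u_{\alpha_1,\beta_1}\cdots u_{\alpha_n,\beta_n}$ be partial isometries. The key point is that this is automatic in any commutative $C^*$-algebra: a partial isometry in $C(X)$ is exactly a function taking values in $\{0\}\cup\T$, and a pointwise product of such functions again takes values in $\{0\}\cup\T$. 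Since the images of the $u_{\alpha,\beta}$ in the abelianisation of $C(\QG_A^1)$ are still partial isometries, so are all of their products.

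Consequently the images of the generators in the abelianisation $C((\QG_A^1)_{\mathrm{clas}})$ satisfy every defining relation of $\CAi$, and by the universal property of $\CAi$ there is a unital $*$-homomorphism $\CAi\to C((\QG_A^1)_{\mathrm{clas}})$ sending generators to generators. As its range is commutative it factors through a surjection $C((\GAi)_{\mathrm{clas}})\to C((\QG_A^1)_{\mathrm{clas}})$, and this is inverse to the abelianisation of the natural surjection $C(\QG_A^1)\to \CAi$. Both maps are the identity on generators, hence intertwine the coproducts $\Phi_A$, so they furnish an isomorphism of compact groups $(\GA)_{\mathrm{clas}}\cong (\GAi)_{\mathrm{clas}}$ for every $\ell\in\N\cup\{\infty\}$.

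It then remains to identify $(\GAi)_{\mathrm{clas}}$. By Theorem \ref{thm:factoring}, $\GAi$ is the quantum isometry group of the spectral triple $(\LC,L^2(O_A,\tau),D)$, and the general principle recalled above identifies its classical version with the classical isometry group of that spectral triple, which was computed in \cite{gerontogiannis25heat} to be $\T\wr\Aut(A)$; combined with the previous paragraph this gives the claim for all $\ell$. I expect the only delicate point to be the first reduction, namely verifying that condition (III) becomes vacuous after abelianisation, since everything else is either an algebraic identity or a direct appeal to the general theory. As a cross-check one may describe the characters directly: each sends every $u_{\alpha,\beta}$ into $\{0\}\cup\T$, forcing $p$ and $q$ to collapse to a common permutation matrix, which by condition (II) must lie in $\Aut(A)$, while the surviving phases range freely over $\T^{N}$; a short computation with $\Phi_A$ then recovers precisely the permutational wreath product multiplication of $\T\wr\Aut(A)$.
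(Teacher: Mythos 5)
Your proposal is correct and takes essentially the same route as the paper, whose one-line proof offers exactly the two arguments you combine: deduction from the computation of the classical isometry group in \cite[Theorem 6.3]{gerontogiannis25heat} (your main argument, with the $\ell$-independence step made explicit via the observation that condition (III) is vacuous in a commutative $C^*$-algebra), and the direct character computation (your cross-check), for which the paper's parenthetical hint that $p=q$ in the abelianization plays the same role as your value-in-$\{0\}\cup\T$ argument. The only detail to add to your cross-check is that the phases indeed range \emph{freely} because preservation of the Perron--Frobenius eigenvector in condition (I) is automatic for any permutation commuting with $A$ (by simplicity of $\lambda_{\max}$), a point the paper records in the remark following the proposition.
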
 

\begin{proof}
	This can be shown directly (noting in particular that in the quotient of $\CA$ by the commutator ideal the matrices $p$ and $q$ of Definition \ref{defn: Ariadne_0} become equal) or deduced from \cite[Theorem 6.3]{gerontogiannis25heat}.
\end{proof}

Note in particular, following Remark \ref{rem: PFvector}, that for the classical version of $\QG_A^{\ell}$, the preservation of the  Perron--Frobenius eigenvector appearing in part (I) of Definition \ref{defn: Ariadne_0} becomes automatic. This follows from the fact that the eigenvalue $\lambda_{\max}$ is simple.

\subsection{The Cuntz case}

Let $\mathbf{1}_{N}$ denote the matrix with all coefficients equal to $1$. If $A = \mathbf{1}_{N}$, then $O_{A}$ is the well-known \emph{Cuntz algebra} \cite{cuntz77simple} usually denoted by $O_{N}$. In that case, the relation $Ap = qA$ follows from the fact that $p$ and $q$ are magic unitaries, so that relation (II) disappears from Definition \ref{defn: Ariadne_0}, and by Remark \ref{rem: PFvector} so does the second part of relation (I). It turns out that the compact quantum groups $\QG_{\mathbf{1}_{N}}^{\ell}$ coincide with some specific unitary easy quantum group first discovered by A.\,Mang in his PhD thesis \cite{Mang2022phd}.

To prove this, we will first give a different description of the quantum groups $\QG_{\textbf{1}_{N}}^{\ell}$, starting from the free complexification $\widetilde{H}_{N}^{+}$ of the quantum hyperoctahedral group $H_{N}^{+}$ (see  \cite{banica08note}):

\begin{definition}
The $C^*$-algebra $C(\widetilde{H}_{N}^{+})$ is the universal $C^*$-algebra generated by elements $(u_{ij})_{1\leqslant i, j\leqslant N}$ such that
\begin{itemize}
\item the matrices $u$ and $\overline{u}$ are unitary;
\item $u_{ij}u_{it}^{*} = 0$ if $j\neq t$ and $u_{ij}u_{sj}^{*} = 0$ if $i\neq s$, for all $1\leqslant i, j, s, t\leqslant N$.
\end{itemize}
The map $\Delta : C(\widetilde{H}_{N}^{+})\to C(\widetilde{H}_{N}^{+})\otimes C(\widetilde{H}_{N}^{+})$ given on the generators by
\begin{equation*}
\Delta(u_{ij}) = \sum_{k=1}^{N}u_{ik}\otimes u_{kj}, \;\;\; i,j=1, \ldots,N,
\end{equation*}
is well-defined and endows $C(\widetilde{H}_{N}^{+})$ with the structure of a compact quantum group, denoted by $\widetilde{H}_{N}^{+}$.
\end{definition}

Let us recall two well-known facts about these quantum groups, with proofs for completeness.

\begin{prop}
The generators of $C(\widetilde{H}_{N}^{+})$ are partial isometries, and their range and source projections form magic unitary matrices.
\end{prop}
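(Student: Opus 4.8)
The plan is to extract from the two unitarity conditions the row and column sums of the two candidate matrices of projections, and then to use the explicit orthogonality relations to promote the relevant entries from positive elements to genuine projections. Write $p_{ij} := u_{ij}u_{ij}^{*}$ and $q_{ij} := u_{ij}^{*}u_{ij}$ for the putative range and source projections. The first step is pure bookkeeping: evaluating the four identities $uu^{*}=u^{*}u=1$ and $\overline{u}\,\overline{u}^{*}=\overline{u}^{*}\overline{u}=1$ on the diagonal yields
\[
\sum_{j} p_{ij}=1,\qquad \sum_{i} p_{ij}=1,\qquad \sum_{j} q_{ij}=1,\qquad \sum_{i} q_{ij}=1,
\]
so that both matrices have all row and column sums equal to the unit. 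Concretely, $\sum_{j}p_{ij}=1$ comes from $uu^{*}=1$ and $\sum_{i}p_{ij}=1$ from $\overline{u}^{*}\overline{u}=1$, while $\sum_{j}q_{ij}=1$ comes from $\overline{u}\,\overline{u}^{*}=1$ and $\sum_{i}q_{ij}=1$ from $u^{*}u=1$; note that at this stage the $p_{ij}$ and $q_{ij}$ are only known to be positive.

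The crucial second step is to notice that the \emph{source} projections $q_{ij}$ are the ones on which the given orthogonality relations bite. Indeed, expanding
\[
q_{ij}q_{it}=u_{ij}^{*}\,(u_{ij}u_{it}^{*})\,u_{it}\qquad\text{and}\qquad q_{ij}q_{sj}=u_{ij}^{*}\,(u_{ij}u_{sj}^{*})\,u_{sj},
\]
the bracketed middle factors vanish for $j\neq t$ and for $i\neq s$ respectively, by the two orthogonality relations defining $C(\widetilde{H}_{N}^{+})$. Hence the $q_{ij}$ are mutually orthogonal along each row and each column. Combining this with the row-sum identity gives $q_{ij}=q_{ij}\sum_{t} q_{it}=q_{ij}^{2}$, so each $q_{ij}$ is a self-adjoint idempotent, i.e.\ a projection; equivalently, each generator $u_{ij}$ is a partial isometry. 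Together with the row and column sums, this shows that $q=(q_{ij})$ is a magic unitary.

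Finally, once the $u_{ij}$ are known to be partial isometries, the range projections $p_{ij}=u_{ij}u_{ij}^{*}$ are automatically projections, and the row- and column-sum identities already established then exhibit $p=(p_{ij})$ as a magic unitary as well. The one point requiring care -- and the reason the argument must be routed through $q$ first -- is that the hypotheses only supply the vanishing of products of the form $u_{ab}u_{cd}^{*}$; the ``reversed'' products $u_{ij}^{*}u_{it}$ are not assumed to vanish, so one cannot directly simplify $p_{ij}p_{it}$. The trick is precisely that these reversed products occur as the harmless outer factors, and the assumed relations as the vanishing inner factors, in the expansions of $q_{ij}q_{it}$ and $q_{ij}q_{sj}$; the partial-isometry property extracted from $q$ is then what upgrades the positive elements $p_{ij}$ to projections for free. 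I expect this routing to be the only genuinely nontrivial point, the remainder reducing to the bookkeeping of the four unitarity relations together with the elementary fact that a self-adjoint idempotent is a projection.
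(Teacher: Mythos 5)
Your proof is correct and takes essentially the same route as the paper: both arguments show that $q_{ij}=u_{ij}^{*}u_{ij}$ is idempotent by inserting the sum $\sum_{t}u_{it}^{*}u_{it}=1$ and killing the cross terms via the relation $u_{ij}u_{it}^{*}=0$ for $j\neq t$, whence the generators are partial isometries and the four unitarity identities turn $p$ and $q$ into magic unitaries. Your closing remark explaining why the argument must pass through the source projections (the reversed products $u_{ij}^{*}u_{it}$ are not assumed to vanish, so $p_{ij}p_{it}$ cannot be simplified directly) is accurate and makes explicit a point the paper leaves unstated.
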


\begin{proof}
The second relation can be written in a more compact form as ($1\leqslant i, j, s, t\leqslant N$)
\begin{equation*}
\delta_{is}u_{ij}u_{st}^{*} = \delta_{jt}u_{ij}u_{st}^{*}.
\end{equation*}
Because $u$ is unitary, this implies that
\begin{align*}
u_{ij}^{*}u_{ij} & = u_{ij}^{*}u_{ij}\sum_{t=1}^{N}u_{it}^{*}u_{it} = \sum_{t=1}^{N}u_{ij}^{*}u_{ij}u_{it}^{*}u_{it} = u_{ij}^{*}u_{ij}u_{ij}^{*}u_{ij}.
\end{align*}
In other words, $u_{ij}$ is a partial isometry, which implies in turn that $u_{ij}u_{ij}^{*}$ is also a projection. Now, the unitarity of $u$ and $\overline{u}$ translates into the fact that the source and range projections of the generators form magic unitary matrices.
\end{proof}

We will now realise our quantum isometry groups as quantum subgroups of $\widetilde{H}_{N}^{+}$. Indeed, we can define them as follows.

\begin{definition}
For $\ell\geqslant 2$, we denote by $C(\QG_{\ell}^N)$ the quotient of $C(\widetilde{H}_{N}^{+})$ by the relations
\begin{equation*}
[u_{i_{2}j_{2}}\cdots u_{i_{n}j_{n}}u_{i_{n}j_{n}}^{*}\cdots u_{i_{2}j_{2}}^{*}, u_{i_{1}j_{1}}^{*}u_{i_{1}j_{1}}] = 0,
\end{equation*}
for all $2\leqslant n\leqslant\ell$ and $1\leqslant i_{1}, \ldots, i_{n}, j_1, \ldots,j_n\leqslant N$.
\end{definition}

This is not exactly the definition we had before. Let us therefore check that they match.

\begin{lemma}\label{lem:isomGlN}
The $C^*$-algebra $C(\QG_{\ell}^N)$ is the quotient of $C(\widetilde{H}_{N}^{+})$ by the relations making $u_{i_{1}j_{1}}\cdots u_{i_{n}j_{n}}$ a partial isometry for all $1\leqslant n\leqslant \ell$. In particular, we have $\QG_\ell^N = \QG_{\textbf{1}_{N}}^{\ell}$ for every $\ell \in \N\cup \{\infty\}$.
\end{lemma}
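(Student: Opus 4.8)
The engine of the argument is a general two-letter criterion for products of partial isometries, which I would isolate and prove first: \emph{if $v,w$ are partial isometries in a $C^*$-algebra, then $vw$ is a partial isometry if and only if the source projection $v^*v$ commutes with the range projection $ww^*$.} Writing $p := v^*v$ and $e := ww^*$, the implication $[p,e]=0 \Rightarrow vw$ a partial isometry is immediate from $(vw)(vw)^*(vw) = v\,e\,p\,w = v(pe)w = (vp)(ew) = vw$, where I use $vp = v$ and $ew = w$. For the converse, if $vw$ is a partial isometry then $vev^* = (vw)(vw)^*$ is a projection; applying the corner $*$-isomorphism $x \mapsto v^*xv$ of $(vv^*)B(vv^*)$ onto $(v^*v)B(v^*v)$ shows that $pep = v^*(vev^*)v$ is a projection, whence $pe$ is a partial isometry and, since $pe(pe)^*(pe) = (pe)^2$, an idempotent; a norm-one idempotent in a $C^*$-algebra is self-adjoint, so $pe = (pe)^* = ep$ (the final step can also be routed through Lemma \ref{lem:FP_theorem}).

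Granting the criterion, the first assertion follows by showing that, inside $C(\widetilde H_N^+)$, the defining commutator relations of $C(\QG_\ell^N)$ and the relations declaring each $u_{i_1j_1}\cdots u_{i_nj_n}$ to be a partial isometry (for $1\le n\le \ell$) generate the same ideal. Setting $t := u_{i_2j_2}\cdots u_{i_nj_n}$, the defining commutator of $C(\QG_\ell^N)$ is precisely $[tt^*,\, u_{i_1j_1}^*u_{i_1j_1}] = 0$. If every product of length at most $\ell$ is a partial isometry, then $u_{i_1j_1}$ and $t$ are partial isometries, and the criterion (converse direction) turns the partial-isometry property of $u_{i_1j_1}t$ into exactly this commutator relation. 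Conversely, arguing by induction on $n$ with $t$ a partial isometry by the inductive hypothesis, the commutator relation feeds back through the criterion (forward direction) to make $u_{i_1j_1}t$ a partial isometry. These are the two ideal inclusions, giving the first claim.

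For the second assertion I would first record that for $A=\mathbf 1_N$ every word is admissible, the right Perron--Frobenius eigenvector is constant, and hence the second part of (I) and condition (II) of Definition \ref{defn: Ariadne_0} are automatic by Remark \ref{rem: PFvector}; thus $C(\QG_{\mathbf 1_N}^{\ell})$ is generated by partial isometries whose range and source projections form magic unitaries, subject to condition (III) that products up to length $\ell$ be partial isometries. It then remains to match this presentation with that of $C(\widetilde H_N^+)$: one direction is the preceding Proposition, and for the other I would check that partial isometries with magic unitary source and range projections automatically satisfy the defining relations of $\widetilde H_N^+$, the key computations being $u_{ij}u_{it}^* = u_{ij}(q_{ij}q_{it})u_{it}^* = 0$ for $j\neq t$ and $u_{ij}u_{sj}^* = u_{ij}(q_{ij}q_{sj})u_{sj}^* = 0$ for $i\neq s$ (absorbing source projections and using orthogonality within a row or column of the magic unitary $q$), together with the analogous identities for $p$, from which $uu^* = u^*u = 1$ and $\overline u\,\overline u^* = \overline u^*\overline u = 1$ follow by summing the rows and columns. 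Since condition (III) is literally the family of partial-isometry relations from the first assertion, combining the two yields $\QG_\ell^N = \QG_{\mathbf 1_N}^{\ell}$, the coproducts agreeing on generators. The only genuine obstacle is the converse half of the two-letter criterion: commutation of the two projections does not follow formally and must be extracted by observing that $pe$ is a norm-one idempotent, hence self-adjoint; the rest is bookkeeping with the magic-unitary relations and an induction on word length.
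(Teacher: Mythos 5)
Your proof is correct, and its core is the same as the paper's: both you and the paper reduce the equivalence of the commutator relations and the partial-isometry relations to the two-letter criterion that, for partial isometries $v,w$, the product $vw$ is a partial isometry if and only if $v^{*}v$ commutes with $ww^{*}$, proving one inclusion by induction on word length and the other directly. The genuine difference is self-containedness: the paper invokes this criterion without proof (here and again in the proposition that follows), whereas you prove it. Your argument for the nontrivial direction is valid, though it can be shortened: multiplying $(vw)(vw)^{*}(vw)=vw$, i.e.\ $v(ww^{*})(v^{*}v)w=vw$, by $v^{*}$ on the left and $w^{*}$ on the right yields $(pe)^{2}=pe$ at once, so the corner-isomorphism step is not needed, and the standard fact that a contractive idempotent in a $C^{*}$-algebra is a projection then finishes. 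You also spell out the ``in particular'' identification $\QG_{\ell}^{N}=\QG_{\mathbf{1}_{N}}^{\ell}$ by matching presentations (partial isometries with magic-unitary range and source projections plus condition (III) versus the $\widetilde{H}_{N}^{+}$ relations plus the partial-isometry relations), which the paper leaves to the preceding proposition and the discussion opening the subsection; your computations $u_{ij}u_{it}^{*}=u_{ij}q_{ij}q_{it}u_{it}^{*}=0$ and their companions are exactly what is needed, modulo the small attribution slip that the disappearance of relation (II) for $A=\mathbf{1}_{N}$ comes from the magic-unitary row and column sums, not from Remark \ref{rem: PFvector} (which only covers the eigenvector condition). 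One caveat: your parenthetical claim that the final self-adjointness step ``can also be routed through Lemma \ref{lem:FP_theorem}'' does not work as stated. From $(pe)^{2}=pe$ one gets $pepe=pe$ and, taking adjoints, $epep=ep$; the hypothesis of that lemma (with $v=p$, $w=e$) is $pep=epep$, which then reads $pep=ep$ and is essentially the conclusion you are after, so the reduction is circular. Since your main route via contractive idempotents is complete, this aside is harmless, but it should be deleted.
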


\begin{proof}
We have to show that each set of relations implies the other. Assume first that the commutation relations are satisfied and let us prove by induction that $u_{i_{1}j_{1}}\cdots u_{i_{n}j_{n}}$ is a partial isometry. This is true for $n = 1$. If it holds for some $n$, then $u_{i_{2}j_{2}}\cdots u_{i_{n+1}j_{n+1}}$ is a partial isometry. The commutation relation then means that the range projection of that partial isometry commutes with the source projection of $u_{i_{1}j_{1}}$ which implies that $u_{i_{1}j_{1}}\cdots u_{i_{n+1}j_{n+1}}$ is a partial isometry, thereby proving the property for $n+1$.
	
Conversely, let us assume that $u_{i_{1}j_{1}}\cdots u_{i_{n}j_{n}}$ is a partial isometry for all $1\leqslant n\leqslant \ell$. In particular, $u_{i_{2}j_{2}}\cdots u_{i_{n}j_{n}}$ is a partial isometry, and the fact that multiplying it by $u_{i_{1}j_{1}}$ yields a partial isometry implies that its range projection commutes with $u_{i_{1}j_{1}}^{*}u_{i_{1}j_{1}}$, so that the proof is complete.
\end{proof}

We will now show that $\QG_{\textbf{1}_{N}}^{\ell}$ is the same as a unitary easy quantum group (in the sense of \cite{tarrago17unitary}, but we will not need any element of the theory hereafter) introduced in A.\,Mang's PhD work \cite{Mang2022phd}, a work which has not appeared in article form so far. Following the notations of \cite[Chap 1]{Mang2022phd}, we set for $\ell\geqslant 2$
\begin{equation*}
R_{\ell} = \{\circ^{k}, \bullet^{k} \mid 1\leqslant k\leqslant \ell\}.
\end{equation*}
This is a \emph{parameter set} in the sense of \cite[Chap 1, Def 3.1]{Mang2022phd} and therefore defines a hyperoctahedral unitary easy compact quantum group which will be denoted by $H_{N}^{[\ell]+}$ in the sequel (the $+$ sign is meant to distinguish these from their orthogonal versions $H_{N}^{[\ell]}$ defined by S. Raum and M. Weber in \cite{RaumWeber}). By \cite[Chap 1, Thm 9.4]{Mang2022phd}, these quantum groups are distinct for distinct values of $\ell$. Let us now prove that they coincide with our $\ell$-Ariadne quantum groups.

\begin{prop}
For any $\ell\in \N\cup \{\infty\}$, $\QG_{\textbf{1}_{N}}^{\ell} = H_{N}^{[\ell]+}$.
\end{prop}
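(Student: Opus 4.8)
The plan is to compare two presentations of quotients of $\widetilde{H}_{N}^{+}$. By the previous lemma we already know that $C(\QG_{\mathbf{1}_{N}}^{\ell})$ is exactly the quotient of $C(\widetilde{H}_{N}^{+})$ by the relations making every word $u_{i_{1}j_{1}}\cdots u_{i_{n}j_{n}}$ with $n\leqslant \ell$ a partial isometry. On the other side, $H_{N}^{[\ell]+}$ is presented through Tannaka--Krein duality: $C(H_{N}^{[\ell]+})$ is the quotient of $C(U_{N}^{+})$ by the relations forcing $T_{p}$ to be an intertwiner of the fundamental corepresentation for every two-coloured partition $p$ in the category $\langle R_{\ell}\rangle$ generated by $R_{\ell}=\{\circ^{k},\bullet^{k}\mid 1\leqslant k\leqslant \ell\}$. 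Since both objects will be seen to be quotients of the \emph{same} algebra $C(\widetilde{H}_{N}^{+})$, it suffices to prove that the two families of relations generate the same ideal, which I would do by establishing the two inclusions of the corresponding relation sets.

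First I would identify the ambient object. The colour-compatible hyperoctahedral relations defining $\widetilde{H}_{N}^{+}$, namely unitarity of $u$ and $\overline{u}$ together with $u_{ij}u_{it}^{*}=0$ for $j\neq t$ and $u_{ij}u_{sj}^{*}=0$ for $i\neq s$, are precisely the intertwiner relations attached to the base hyperoctahedral category underlying every $R_{\ell}$; thus $\widetilde{H}_{N}^{+}$ is the easy quantum group of that base category (the natural $\ell=1$ case $H_{N}^{[1]+}$). This step only requires matching the basic coloured pairings with the generating relations of $\widetilde{H}_{N}^{+}$. With this in place, the category $\langle R_{\ell}\rangle$ is obtained from the base by adjoining the blocks $\circ^{k},\bullet^{k}$ for $2\leqslant k\leqslant \ell$, so that $C(H_{N}^{[\ell]+})$ is the quotient of $C(\widetilde{H}_{N}^{+})$ by the extra relations carried by these generators, and in particular $H_{N}^{[\ell]+}$ is a quantum subgroup of $\widetilde{H}_{N}^{+}$.

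The heart of the argument is then a single combinatorial identification, carried out for each $k$ with $2\leqslant k\leqslant \ell$: I would unwind the intertwiner relation associated with the block $\circ^{k}$ into an explicit polynomial relation among the generators, and check, modulo the base relations of $\widetilde{H}_{N}^{+}$, that it is equivalent to $w w^{*} w = w$ for $w=u_{i_{1}j_{1}}\cdots u_{i_{k}j_{k}}$; equivalently, to the commutation of the range projection of $u_{i_{2}j_{2}}\cdots u_{i_{k}j_{k}}$ with the source projection of $u_{i_{1}j_{1}}$, which is exactly the defining relation of $C(\QG_{k}^{N})$. The relation attached to $\bullet^{k}$ is the adjoint statement. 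Once this equivalence is verified for every $k\leqslant \ell$, both inclusions of ideals hold simultaneously, giving $C(\QG_{\mathbf{1}_{N}}^{\ell})=C(H_{N}^{[\ell]+})$ as quotients of $C(\widetilde{H}_{N}^{+})$; one then checks that this identification of generators intertwines the two comultiplications, whence $\QG_{\mathbf{1}_{N}}^{\ell}=H_{N}^{[\ell]+}$.

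I expect the main obstacle to be exactly this last matching: translating Mang's two-coloured partition $\circ^{k}$, under the conventions of \cite{Mang2022phd}, into the correct operator relation, and recognising that after using the hyperoctahedral vanishing relations to kill the off-diagonal terms, the intertwiner condition collapses to precisely the partial-isometry identity. The case $\ell=\infty$ then follows formally, since $C(\QG_{\mathbf{1}_{N}}^{\infty})$ and $C(H_{N}^{[\infty]+})$ are the quotients by the union over all finite $\ell$ of the respective relations, so the finite-$\ell$ equalities pass to the limit.
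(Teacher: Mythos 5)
Your proposal takes essentially the same route as the paper: both realise $\QG_{\mathbf{1}_{N}}^{\ell}$ and $H_{N}^{[\ell]+}$ as quotients of $C(\widetilde{H}_{N}^{+})$, and both reduce the proposition to showing that, modulo the base relations, Mang's $\circ^{k}$-intertwiner relations are equivalent to the partial-isometry relations (i.e.\ commutation of the source projection of $u_{i_{1}j_{1}}$ with the range projection of $u_{i_{2}j_{2}}\cdots u_{i_{k}j_{k}}$) for each $k\leqslant \ell$. The only difference is that the paper actually executes the verification you defer -- a short induction on $k$ in which the magic-unitary row/column sums and the $\circ^{k+1}$ relation yield the commutation, and then the computation is reversed for the converse inclusion -- so your outline matches the paper's proof step for step.
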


\begin{proof}
We shall use the identification of Lemma \ref{lem:isomGlN} and first prove that the generators $(v_{ij})_{1\leqslant i, j\leqslant N}$ of $C(H_{N}^{[\ell]+})$ satisfy the defining relations of the generators $(u_{ij})_{1\leqslant i, j\leqslant N}$ of $C(\QG_{\ell}^N)$. Let us recall the key relations in the definition of $H_{N}^{[\ell]+}$: the fact that the partition $\pi_{\circ^{k}}$ gives an intertwiner is equivalent to the equality (see \cite[Chap 1, Par 7.2]{Mang2022phd})
\begin{equation*}
\delta_{\mathbf{j}, \mathbf{t}}\times v_{i_{1}j_{1}}\cdots v_{i_{k}j_{k}}v_{s_{k}t_{k}}^{*}\cdots v_{s_{1}t_{1}}^{*} = \delta_{\mathbf{i}, \mathbf{s}}\times v_{i_{1}j_{1}}\cdots v_{i_{k}j_{k}}v_{s_{k}t_{k}}^{*}\cdots v_{s_{1}t_{1}}^{*}
\end{equation*}
For $k = 1$, this means that $v_{i_{1}j_{1}}v_{s_{1}t_{1}}^{*}$ vanishes if $i_{1} = s_{1}$ but $j_{1}\neq t_{1}$ or if $i_{1}\neq s_{1}$ but $j_{1} = t_{1}$. In other words, the generators satisfy the defining relations of $C(\widetilde{H}_{N}^{+}) = C(\QG_{1}^N)$. Assume now that we have proven that the relations coming from $\pi_{\circ^{k}}$ imply that any product of $k$ generators is a partial isometry and let us write the relations corresponding to $k+1$. Setting $V = v_{i_{2}j_{2}}\cdots v_{i_{\ell+1}j_{\ell+1}}$, we have
\begin{align*}
v_{i_{1}j_{1}}^{*}v_{i_{1}j_{1}}VV^{*} & = \sum_{s_{1}=1}^{N} v_{i_{1}j_{1}}^{*}v_{i_{1}j_{1}}VV^{*}v_{s_{1}j_{1}}^{*}v_{s_{1}j_{1}} = v_{i_{1}j_{1}}^{*}v_{i_{1}j_{1}}VV^{*}v_{i_{1}j_{1}}^{*}v_{i_{1}j_{1}} \\
& = \sum_{t_{1}=1}^{N}v_{i_{1}t_{1}}^{*}v_{i_{1}t_{1}}VV^{*}v_{i_{1}j_{1}}^{*}v_{i_{1}j_{1}} = VV^{*}v_{i_{1}j_{1}}^{*}v_{i_{1}j_{1}}.
\end{align*}
In other words, the source projection of $v_{i_{1}j_{1}}$ commutes with the range projection of $V$, which implies that their product is a partial isometry. As a consequence, we have a surjective $*$-homomorphism $\Phi : C(\QG_{\ell}^N)\to C(H_{N}^{[\ell]+})$ sending $u_{ij}$ to $v_{ij}$.
	
To prove that $\Phi$ is an isomorphism, it is enough to show that the generators $(u_{ij})_{1\leqslant i, j\leqslant N}$ satisfy the relations associated to $\pi_{\circ^{k}}$ for all $1\leqslant k\leqslant \ell$. But because a product of two partial isometries is a partial isometry if and only if the source projection of the first one commutes with the range projection of the second one, we can simply reverse the computations above, and the result follows.
\end{proof}

Let us record a direct corollary of this, answering the question raised in Remark \ref{rem:distinct} in this particular case, which comes from \cite[Thm 7.1]{Mang2022phd}

\begin{cor}\label{cor:diff_Gl}
The compact quantum groups $\QG_{\textbf{1}_{N}}^{\ell}$ are not isomorphic for distinct values of $\ell$.
\end{cor}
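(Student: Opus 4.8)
The plan is to obtain this as an immediate formal consequence of the identification established in the preceding proposition, combined with the distinctness result from Mang's thesis. First I would invoke the proposition just proven, which gives $\QG_{\textbf{1}_{N}}^{\ell} = H_{N}^{[\ell]+}$ as compact quantum groups for every $\ell \in \N \cup \{\infty\}$. This reduces the claim entirely to showing that the hyperoctahedral unitary easy quantum groups $H_{N}^{[\ell]+}$ are pairwise non-isomorphic for distinct values of $\ell$.

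For that latter point I would appeal directly to the classification in \cite[Thm 7.1]{Mang2022phd} (already recalled above via \cite[Chap 1, Thm 9.4]{Mang2022phd}), which asserts exactly this non-isomorphism. The relevant structural input is that distinct values of $\ell$ correspond to genuinely distinct parameter sets $R_{\ell} = \{\circ^{k}, \bullet^{k} \mid 1 \leqslant k \leqslant \ell\}$, and Mang's theory shows that the easy quantum groups attached to inequivalent parameter sets are mutually non-isomorphic. Chaining the two steps then yields that the $\QG_{\textbf{1}_{N}}^{\ell}$ are non-isomorphic for distinct $\ell$, which is the assertion of the corollary.

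Since everything follows formally from a cited classification, there is no substantive obstacle here; the only point requiring care is to confirm that the notion of isomorphism used in \cite{Mang2022phd} -- namely an isomorphism of the underlying Hopf $C^*$-algebras intertwining the coproducts -- is precisely the one transported by the identification of the previous proposition, which it is. I would therefore write the proof as a single sentence combining the proposition with \cite[Thm 7.1]{Mang2022phd}.
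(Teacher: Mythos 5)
Your proposal matches the paper's argument exactly: the corollary is deduced by combining the preceding proposition ($\QG_{\textbf{1}_{N}}^{\ell} = H_{N}^{[\ell]+}$) with Mang's non-isomorphism result for the quantum groups $H_{N}^{[\ell]+}$, cited in the paper via \cite[Chap 1, Thm 9.4]{Mang2022phd} and \cite[Thm 7.1]{Mang2022phd}. Nothing further is needed.
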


\begin{remark}
The previous description of $\QG_{\mathbf{1}_{N}}^{\ell}$ as a unitary easy quantum group suggests to try to use partition techniques to study their structure and representation theory (see for instance \cite{freslon2023compact}). However, a crucial first step would be to prove that the maps associated to the partitions defining $H_{N}^{[\ell]+}$ are linearly independent, which might prove difficult due to the presence of crossings.
\end{remark}

\subsection{The structure of the quantum isometry group and quantum graph automorphisms}\label{sec:QAut}

Thinking of the matrix $A$ as the adjacency matrix of a directed graph, one can try to relate $\QG_{A}^{\ell}$ to the corresponding \emph{quantum automorphism group} $\QAut(A)$. As mentioned in Subsection \ref{Sec:classical}, the classical isometry group of $O_{A}$ indeed decomposes as a semi-direct product $\mathbb{T}\wr \Aut(A) = \mathbb{T}^{N}\rtimes \Aut(A)$. As we will see, the quantum situation is much more intricate. Let us explain first what we mean by quantum automorphism group, following \cite{banica05quantum}.

\begin{definition}\label{def:QAut}
Let $C(\QAut(A))$ be the universal $C^*$-algebra generated by the coefficients $p_{ij}$, $1\leqslant i, j\leqslant N$ of a magic unitary matrix satisfying the relation
\begin{equation*}
Ap = pA.
\end{equation*}
\end{definition}

Endowed with the unique $*$-homomorphism $$\Delta : C(\QAut(A))\to C(\QAut(A))\otimes C(\QAut(A))$$ such that $\Delta(p_{ij}) = \sum_{k=1}^{N}p_{ik}\otimes p_{kj}$ for all $i,j=1,\ldots, N$, this is a compact quantum group called the \emph{quantum automorphism group} of $A$.

Let us first focus on the torus part $\T^{N}$ in the decomposition of $\T \wr \Aut(A)$, noting that its action on $O_A$ coincides with the standard (multiparameter version of) gauge action. It is natural to consider its quantum counterpart to be the dual of the free group $\F_{N}$.  Recall that a quantum group $\QH$ is said to be a \emph{quantum subgroup} of $\QG$ if there exists a surjective quantum group homomorphism $\pi : \CG\to C(\QH)$.

\begin{prop}\label{prop:quantumgauge}
The compact quantum group $\GAi$ contains $\widehat{\F}_{N}$ as a quantum subgroup, as does $\GA$ for all $\ell\in \N$. 
\end{prop}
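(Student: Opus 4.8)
The plan is to produce $\widehat{\F}_N$ as a quotient of $\GA$ by collapsing all off-diagonal generators to $0$ and sending the diagonal ones to the free unitary generators. Recall that $C(\widehat{\F}_N)=C^*(\F_N)$, the full group $C^*$-algebra of the free group on $N$ generators $g_1,\dots,g_N$, is a compact quantum group whose coproduct is determined by $\Delta(g_i)=g_i\otimes g_i$. I would define a map on generators by
\[
\pi(u_{i,j}):=\delta_{i,j}\,g_i,\qquad i,j\in V_A^1,
\]
and the work consists in checking that this respects the defining relations (I)--(III) of Definition \ref{defn: Ariadne_0}, so that $\pi\colon\CA\to C^*(\F_N)$ exists by the universal property.

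Each element $\delta_{i,j}g_i$ is a partial isometry (either $0$ or a unitary), and its range and source projections are $\pi(p_{i,j})=\pi(q_{i,j})=\delta_{i,j}1$. Hence both $\pi(p)$ and $\pi(q)$ equal the identity matrix $I_N$ over $C^*(\F_N)$, which is a magic unitary and trivially preserves the components of $\vec u$; this is property (I). Property (II) becomes $A\,\pi(p)=\pi(q)\,A$, i.e.\ the tautology $A=A$. For (III), the product $\pi(u_{\alpha_1,\beta_1})\cdots\pi(u_{\alpha_n,\beta_n})$ equals $g_{\alpha_1}\cdots g_{\alpha_n}$ when $\alpha=\beta$ and $0$ otherwise, so it is a partial isometry in every case. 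Thus $\pi$ is a well-defined unital $*$-homomorphism.

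It then remains to verify that $\pi$ is a surjective quantum group homomorphism. The intertwining of coproducts is the computation
\[
(\pi\otimes\pi)\Phi_A(u_{i,j})=\sum_{\gamma}\delta_{i,\gamma}g_i\otimes\delta_{\gamma,j}g_j=\delta_{i,j}\,g_i\otimes g_i=\Delta\bigl(\pi(u_{i,j})\bigr),
\]
which extends off the generators by multiplicativity. Surjectivity is immediate since the image of $\pi$ contains each $g_i=\pi(u_{i,i})$, and these generate $C^*(\F_N)$. This realizes $\widehat{\F}_N$ as a quantum subgroup of $\GAi$; since the relations (I)--(III) were verified at every finite level, the identical formula defines a surjection out of $\CA$ for each $\ell\in\N$ as well (equivalently, one composes $\pi$ for $\ell=\infty$ with the canonical quotient $\CA\twoheadrightarrow C(\GAi)$). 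There is no serious obstacle here; the only point requiring attention is the collapse $\pi(p)=\pi(q)=I_N$, which is precisely what makes relations (I) and (II) degenerate into trivialities. Conceptually, composing $\pi\otimes\id$ with $\varphi_A$ sends $S_i\mapsto g_i\otimes S_i$, exhibiting $\widehat{\F}_N$ as the quantum gauge symmetry announced in the introduction.
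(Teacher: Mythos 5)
Your proof is correct and takes essentially the same route as the paper: the paper's proof also sets $v_{ij}:=\delta_{ij}g_i$ and observes that this matrix satisfies the defining relations of $\CAi$ (hence of every $\CA$), yielding the desired surjective quantum group homomorphism onto $C^*(\F_N)$. Your version simply spells out the verification of relations (I)--(III), the intertwining of coproducts, and surjectivity, which the paper leaves implicit.
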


\begin{proof}
Simply observe that if $(g_{1}, \ldots, g_{N})$ denote the standard free generators of $\F_{N}$, then setting $v_{ij} := \delta_{ij}g_{i}$, $i,j=1, \ldots, N$ defines a matrix satisfying the defining relations of $\CAi$, providing the desired surjective $*$-homomorphism.

\end{proof}

This looks encouraging, but we run into the following issue: even though $\GAi$ contains the classical graph automorphism group $\Aut(A)$, it is unclear whether $\GAi$ also contains the quantum automorphism group $\QAut(A)$. To get a better grasp at this problem, let us show that the result at least holds when considering the larger quantum group $\QG_{A}^{1}$:

\begin{prop}\label{prop:QAutSubgroup}
The quantum group $\QG_{A}^{1}$ contains $\QAut(A)$ as a quantum subgroup. Moreover, the quantum group $\QG_A^{\infty}$ contains $\Aut(A)$ as a quantum subgroup.
\end{prop}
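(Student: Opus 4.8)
The plan is to construct the two required surjective quantum group homomorphisms by hand, each via a universal property. For the inclusion $\QAut(A)\leqslant \QG_A^1$ I would set $\pi_1(u_{\alpha,\beta}):=p_{\alpha,\beta}$, where $p=(p_{\alpha,\beta})$ is the defining magic unitary of $\QAut(A)$. Since each $p_{\alpha,\beta}$ is a projection, it is a partial isometry whose range and source projections both equal $p_{\alpha,\beta}$; thus under this assignment both matrices $p$ and $q$ of Definition \ref{defn: Ariadne_0} are sent to the single magic unitary $p$. To invoke the universal property of $C(\QG_A^1)$ I must check relations (I)--(III): relation (III) is vacuous for $\ell=1$, while the magic unitarity in (I) and the intertwining relation (II), $Ap=qA$, which becomes $Ap=pA$, hold by the very definition of $\QAut(A)$. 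The only substantive point is the preservation of the Perron--Frobenius eigenvector, $p\vec u=\vec u 1$.

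I would derive this from primitivity. From $Ap=pA$ and $A\vec u=\lambda_{\max}\vec u$ one gets $A(p\vec u)=\lambda_{\max}(p\vec u)$, so $p\vec u$ lies in the $\lambda_{\max}$-eigenspace of $A\otimes\id$ acting on $\mathbb C^N\otimes C(\QAut(A))$. As $A$ is primitive, $\lambda_{\max}$ is a simple eigenvalue, hence this eigenspace equals $\mathbb C\vec u\otimes C(\QAut(A))$ and $p\vec u=\vec u\otimes f$ for a single $f\in C(\QAut(A))$. Summing all coordinates and using that each column of $p$ sums to $1$ and that $\sum_\alpha u_\alpha=1$ forces $f=1$, so $p\vec u=\vec u 1$; this is precisely the automatic preservation flagged in Remark \ref{rem: PFvector}. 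Universality then yields $\pi_1$; it is a quantum group homomorphism because on generators $(\pi_1\otimes\pi_1)\Phi_A(u_{\alpha,\beta})=\sum_\gamma p_{\alpha,\gamma}\otimes p_{\gamma,\beta}=\Delta(p_{\alpha,\beta})$, and it is surjective since the $p_{\alpha,\beta}$ generate $C(\QAut(A))$.

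For the inclusion $\Aut(A)\leqslant\QG_A^\infty$ I would invoke the factorisation theorem rather than build the map directly. The classical group $\Aut(A)$ acts on $O_A$ by $S_i\mapsto S_{\sigma(i)}$, that is $\Theta(S_i)=\sum_j\chi_{ij}\otimes S_j$ with $\chi_{ij}(\sigma)=\delta_{j,\sigma(i)}$; this is well defined because $\sigma$ preserves $A$, and it is faithful since distinct permutations act differently on the generators. As $\Aut(A)$ sits inside the classical isometry group $\T\wr\Aut(A)$ computed in \cite{gerontogiannis25heat}, the action $\Theta$ is the restriction of a $D$-isometric action and is therefore itself $D$-isometric, so $(\Aut(A),\Theta)\in Q_A(D)$. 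Theorem \ref{thm:factoring} then produces a quantum group homomorphism $\pi_2:C(\QG_A^\infty)\to C(\Aut(A))$ with $(\pi_2\otimes\id)\varphi_A=\Theta$, and the faithfulness of $\Theta$ makes $\pi_2$ surjective by the final clause of that theorem.

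Alternatively, for the second part one can compose the quotient map onto the classical version, $C(\QG_A^\infty)\to C(\QGc)=C(\T\wr\Aut(A))$, with the restriction map onto the subgroup $\Aut(A)$. Either way, the main (and essentially only) obstacle is the automatic eigenvector preservation established in the second paragraph; all remaining steps reduce to verifying the defining relations on generators and checking compatibility with the coproducts, which is routine.
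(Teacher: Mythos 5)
Your proposal is correct, and it splits naturally into a part that matches the paper and a part that genuinely diverges. For the first claim you follow the paper's own route: map $u_{\alpha,\beta}\mapsto p_{\alpha,\beta}$, observe that projections are partial isometries whose range and source projections coincide, so both matrices $p$ and $q$ of Definition \ref{defn: Ariadne_0} collapse to the single magic unitary of $\QAut(A)$ and relation (II) becomes $Ap=pA$. Where you do more than the paper is the explicit verification of $p\vec{u}=\vec{u}1$: the paper's one-line proof just asserts that the defining relations are satisfied, leaving this condition implicit (it is justified only indirectly, via the remark in Subsection \ref{Sec:classical} that simplicity of $\lambda_{\max}$ makes the preservation automatic). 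Your eigenspace argument is valid and fills this in: since $\lambda_{\max}$ is simple, $(A-\lambda_{\max})\otimes\id$ is invertible on $W\otimes C(\QAut(A))$ for the complementary $A$-invariant subspace $W\subset\mathbb{C}^{N}$, so $Ap=pA$ forces $p\vec{u}\in\vec{u}\otimes C(\QAut(A))$, and summing coordinates gives coefficient $1$. Note that this works precisely because $p=q$ here; in the situation of Remark \ref{rem: PFvector}, where $Ap=qA$ with $p\neq q$, one only gets $A(p\vec{u})=\lambda_{\max}\,q\vec{u}$ and the argument breaks down, which is consistent with the open question stated there. For the second claim the paper argues directly: $C(\Aut(A))$ is the abelianization of $C(\QAut(A))$, so among commuting elements all relations of $\CAi$ (in particular the partial-isometry conditions (III) in every length) hold trivially, yielding the surjection $\CAi\to C(\Aut(A))$ at once. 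You instead feed the classical permutation action $\Theta$ into Theorem \ref{thm:factoring}, using that $\Theta$ is $D$-isometric (being the restriction to $\Aut(A)\leq\T\wr\Aut(A)$ of the classical isometry group action from \cite{gerontogiannis25heat}) and faithful, so the factorisation morphism $\pi_2:\CAi\to C(\Aut(A))$ is surjective. This is legitimate and circularity-free, since Theorem \ref{thm:factoring} is proved independently of this proposition; it trades a two-line algebraic check for the paper's main theorem, with the benefit that no relations need to be verified at all. Your alternative route (quotient onto the classical version $\T\wr\Aut(A)$, then restrict to the subgroup $\Aut(A)$) is, in substance, the paper's argument.
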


\begin{proof}
Consider the magic unitary matrix $p=(p_{ij})_{1\leqslant i, j\leqslant N}$ of $C(\QAut(A))$. Then, $p_{ij}p_{ij}^{*} = p_{ij} = p_{ij}^{*}p_{ij}$. Moreover, $Ap = pA$, so that the defining relations of $C(\QG_{A}^{1})$ are satisfied, provinding the first part of the statement. As for the second part, simply observe that $C(\Aut(A))$ is obtained by making all the generators of $C(\QAut(A))$ commute, so that in particular all the commutation relations defining $\CAi$ are satisfied.
\end{proof}

To investigate the problem further, we will now introduce a specific `quantum quotient' of $\GAi$. More precisely, let us denote by $B$ the $C^*$-subalgebra of $\CAi$ generated by the elements $p_{ij}=v_{ij}v_{ij}^*$ and $q_{ij}=v_{ij}^*v_{ij},$ where $v_{ij}$ are the generators of $\CAi$. Because these two matrices are representations, we have $\Phi_{A}(B)\subset B\otimes B$, hence $B = C(\QH_A)$ for some compact quantum group $\QH_{A}$. We will now show that  $\QH_{\bf{1}_N}$ contains the quantum permutation group $\mathbb S_N^+$, and even two of its copies.

\begin{prop}\label{prop:magicinside}
There exists a surjective $*$-homomorphism 
$$\pi  :C(\QH_{\bf{1}_N}) \to C(\mathbb S_N^+\times \mathbb S_N^+),$$ 
mapping $v_{ij}v_{ij}^{*}$ to $P_{ij}$ and $v_{ij}^{*}v_{ij}$ to $Q_{ij}$ for all  $1\leqslant i,j\leqslant N$, where $(P_{ij})_{i,j=1}^{N}$ and $(Q_{ij})_{i, j = 1}^{N}$ are the magic unitaries generating respectively the first and second copies of $C(\mathbb S_N^+)$.
\end{prop}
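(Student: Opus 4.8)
The plan is to produce $\pi$ as the restriction to $B=C(\QH_{\bf{1}})$ of a genuine $*$-representation of the whole algebra $C(\QG_{\bf{1}}^\infty)$, obtained from its universal property (Definition \ref{defn: Ariadne_0}). Concretely, I would look for a $C^*$-algebra $\mathcal{M}$ carrying two \emph{commuting} copies of $C(\mathbb S_N^+)$, with magic unitaries $P=(P_{ij})$ and $Q=(Q_{ij})$, together with partial isometries $V_{ij}\in\mathcal{M}$ such that $V_{ij}V_{ij}^*=P_{ij}$ and $V_{ij}^*V_{ij}=Q_{ij}$, and such that every product $V_{i_1j_1}\cdots V_{i_nj_n}$ is again a partial isometry. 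Since $A=\mathbf 1_N$ is out-regular, relation (II) is vacuous and the Perron--Frobenius part of (I) is automatic (Remark \ref{rem: PFvector}), so such data satisfy all the defining relations of $C(\QG_{\bf{1}}^\infty)$; the universal property then yields a $*$-homomorphism $\Psi:C(\QG_{\bf{1}}^\infty)\to\mathcal{M}$ with $\Psi(v_{ij})=V_{ij}$, whence $\Psi(v_{ij}v_{ij}^*)=P_{ij}$ and $\Psi(v_{ij}^*v_{ij})=Q_{ij}$. Restricting $\Psi$ to $B$ and noting that $P$ and $Q$ generate $C(\mathbb S_N^+)\otimes C(\mathbb S_N^+)=C(\mathbb S_N^+\times\mathbb S_N^+)$ then gives the required surjection.

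For the construction of $\mathcal{M}$ I would use a shift rather than a flip. Let $\mathcal{B}:=\bigotimes_{n\in\Z}C(\mathbb S_N^+)$ be the (unital, inductive-limit) infinite tensor product, write $a_{ij}^{(k)}$ for the standard magic unitary placed in the $k$-th leg, let $\theta\in\Aut(\mathcal{B})$ be the shift $a_{ij}^{(k)}\mapsto a_{ij}^{(k+1)}$, and set $\mathcal{M}:=\mathcal{B}\rtimes_\theta\Z$ with canonical implementing unitary $W$, so that $WbW^*=\theta(b)$. Put $Q_{ij}:=a_{ij}^{(0)}$ and $V_{ij}:=WQ_{ij}$. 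Then $V_{ij}^*V_{ij}=Q_{ij}^*Q_{ij}=Q_{ij}$ and $V_{ij}V_{ij}^*=WQ_{ij}W^*=a_{ij}^{(1)}=:P_{ij}$, so that the range and source projections live in the two distinct, hence commuting, legs $1$ and $0$; this is exactly a pair of commuting magic unitaries generating $C(\mathbb S_N^+)\otimes C(\mathbb S_N^+)$ inside $\mathcal B$.

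The reason (III) works is precisely this separation across legs. Using $Wb=\theta(b)W$ repeatedly,
\[
V_{i_1j_1}\cdots V_{i_nj_n}=\theta(Q_{i_1j_1})\theta^2(Q_{i_2j_2})\cdots\theta^n(Q_{i_nj_n})\,W^n=\Big(\textstyle\prod_{k=1}^{n}a_{i_kj_k}^{(k)}\Big)W^n.
\]
The factors $a_{i_kj_k}^{(k)}$ sit in pairwise distinct legs, so they commute and their product $E$ is a projection; hence $V_{i_1j_1}\cdots V_{i_nj_n}=EW^n$ is manifestly a partial isometry, establishing (III) for all words (all admissible, as $A=\mathbf 1_N$). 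With (I), (II), (III) verified, $\Psi$ exists, and $\pi:=\Psi|_B$ sends $v_{ij}v_{ij}^*$ to $P_{ij}$ (first copy) and $v_{ij}^*v_{ij}$ to $Q_{ij}$ (second copy), with image all of $C(\mathbb S_N^+\times\mathbb S_N^+)$. Compatibility of $\pi$ with the coproducts is then immediate from $\Phi_A(v_{ij}v_{ij}^*)=\sum_k v_{ik}v_{ik}^*\otimes v_{kj}v_{kj}^*$ (using the orthogonality of Remark \ref{rem: orthogonality}) and the analogous identity for $q$.

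I expect the main obstacle to be exactly guaranteeing (III). The first natural attempt is to implement the two copies through a single flip $\Sigma$ between two fixed copies of $C(\mathbb S_N^+)$, i.e.\ $V_{ij}=(P_{ij}\otimes 1)\Sigma$; but because $\Sigma^2=1$, the even- and odd-position range projections collapse back into the same copy, and one computes that a product of length $n$ acquires factors such as $P_{i_1j_1}P_{i_3j_3}$ inside a single copy. These are products of non-commuting projections in $C(\mathbb S_N^+)$, hence generally \emph{not} partial isometries, so the flip construction violates (III) (and, via the criterion behind Lemma \ref{lem:FP_theorem}, does not factor through $C(\QG_{\bf{1}}^\infty)$). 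Spreading the successive range projections over infinitely many legs through the shift $W$ is precisely what eliminates these cross terms, reducing every product to a single projection times a power of $W$; this is the crux of the argument.
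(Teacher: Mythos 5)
Your proposal is correct and follows essentially the same route as the paper: both constructions spread the range projections of products over pairwise distinct legs of a two-sided infinite tensor product $\bigotimes_{k\in\Z}C(\mathbb S_N^+)$ by composing the leg-zero magic unitary with a shift, so that every word in the generators becomes a projection times a power of the shift, hence a partial isometry, and the two relevant legs yield the commuting copies of $C(\mathbb S_N^+)$. The only (cosmetic) difference is that the paper implements the shift spatially, as a unitary on an infinite tensor product of Hilbert spaces carrying $C(\mathbb S_N^+)$, whereas you package exactly the same data abstractly in the crossed product $\mathcal{B}\rtimes_\theta\Z$.
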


\begin{proof}
Let $A=\bf{1}_N$ and denote by $B'$ the subalgebra of $B$ generated by the coefficients of $p$ only. Realise $C(\mathbb S_N^+)$ on a Hilbert space $H$, and define $$K= \bigotimes_{k \in \Z} H,$$ viewing $C(\mathbb S_N^+)^{(k)}$ as acting on $K$ (at the $k$-th component). Strictly speaking we need to choose a reference vector to make sense of the infinite tensor product, but here we can simply choose any unit vector $\Omega$ in $H$; this will play no role in the considerations below. Let $\sigma \in B(K)$	denote the right shift and set (for $1\leqslant i, j\leqslant N$) 
\[ w_{ij} = \sigma P_{ij}^{(0)} = P_{ij}^{(1)} \sigma, \]
so that  for $\xi = (\xi_k)_{k \in \Z} \in K$ and $l \in \Z$ we have
\[(w_{ij}(\xi))_l = \begin{cases} \xi_{l-1} & \textup{if }  l\neq 1, \\ P_{ij} \xi_0 & \textup{if }  l = 1.
\end{cases}\]
(Formally one should remember that the formulas as above only make sense if all but finitely many $\xi_l$ are equal to $\Omega$.) Then, $w_{ij}^*w_{ij} = P_{ij}^{(0)}$, $w_{ij} w_{ij}^* = P_{ij}^{(1)}$ for all $i, j=1, \ldots, N$, and analogously for $m \in \N$ and $i_1, \ldots, i_m, j_1, \ldots, j_m \in \{1, \ldots,N\}$
\[ w_{i_1 j_1}\cdots  w_{i_m j_m} = \sigma^m P_{i_1j_1}^{(1-m)} \cdots P_{i_m j_m}^{(0)} = P_{i_1j_1}^{(1)} \cdots P_{i_m j_m}^{(m)} \sigma^m  \]
\[ (w_{i_1 j_1}\cdots  w_{i_m j_m})^* w_{i_1 j_1}\cdots  w_{i_m j_m} = P_{i_1j_1}^{(1-m)}\cdots P_{i_mj_m}^{(0)} = P_{i_mj_m}^{(0)} \cdots  P_{i_1j_1}^{(1-m)},  \] 
\[ w_{i_1 j_1}\cdots  w_{i_m j_m} (w_{i_1 j_1}\cdots  w_{i_m j_m})^* = P_{i_1j_1}^{(1)}\cdots P_{i_mj_m}^{(m)} .\]
Thus we have that the source and range  projections of any $w_{i_1 j_1}\cdots  w_{i_m j_m}$ as above commute (acting on different parts of the tensor product). Also, the relations defining $\QG_{\bf{1}_N}^{\infty}$ are satisfied, so that we obtain a $*$-homomorphism $\rho:C(\QG_{\bf{1}_N}^{\infty}) \to B(K)$ mapping each $v_{ij}$ to the respective $w_{ij}$. By definition we then have $\rho(v_{ij}^*v_{ij}) = P_{ij}^{(0)} \in C(\mathbb S_N^+)$ so that $\pi' = \rho_{\mid B'}$ is a quantum group homomorphism from $B'$ to $C(\mathbb S_N^+)$. If now $B''$ denotes the $C^*$-subalgebra generated by the coefficients of $q$ only, the same argument provides a quantum group homomorphism $\pi'' : B''\to C(\mathbb S_N^+)$ and observing that $\pi'(B')$ and $\pi''(B'')$ commute concludes the proof.
\end{proof}

We can now prove that, contrary to the classical case, the `gauge' part of the quantum isometry group introduced in \ref{prop:quantumgauge} need not be normal.
\begin{cor}
	Let $N \geq 4$. The quantum subgroup $\widehat{\mathbb{F}_N}$ of $\QG_{\mathbf{1}_N}^\infty$ is not normal.
\end{cor}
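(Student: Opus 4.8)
The plan is to use the standard characterization of normality in terms of coset algebras. The quantum subgroup $\widehat{\mathbb{F}_N}\leq \QG_{\mathbf{1}}^{\infty}$ is the one furnished by the surjection $\pi(v_{ij})=\delta_{ij}g_i$ of Proposition \ref{prop:quantumgauge}, and I would take as working definition that $\widehat{\mathbb{F}_N}$ is \emph{normal} exactly when the two $\widehat{\mathbb{F}_N}$-invariant subalgebras of $\Pol(\QG_{\mathbf{1}}^{\infty})$ coincide, namely
$$\mathcal{A}_L:=\{a:(\pi\otimes\id)\Phi_A(a)=1\otimes a\}\qquad\text{and}\qquad \mathcal{A}_R:=\{a:(\id\otimes\pi)\Phi_A(a)=a\otimes 1\}.$$
Thus it suffices to produce one element of $\mathcal{A}_R$ that is not in $\mathcal{A}_L$. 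First I would record the two coactions on generators: since $\Phi_A(v_{ij})=\sum_k v_{ik}\otimes v_{kj}$ and $\pi(v_{kj})=\delta_{kj}g_j$, one gets $(\id\otimes\pi)\Phi_A(v_{ij})=v_{ij}\otimes g_j$ and $(\pi\otimes\id)\Phi_A(v_{ij})=g_i\otimes v_{ij}$, with inverse generators on the adjoints. In effect the left (resp. right) coaction grades a monomial in the $v_{ij},v_{ij}^{*}$ by the $\mathbb{F}_N$-word formed from its row (resp. column) indices, so that $\mathcal{A}_R$ consists of the elements whose column word is trivial and $\mathcal{A}_L$ of those whose row word is trivial.

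The candidate I would use is $a:=v_{11}v_{34}^{*}v_{34}v_{21}^{*}$, which requires the four distinct indices $1,2,3,4$ and hence $N\geq 4$. Its column word is $g_1g_4^{-1}g_4g_1^{-1}=e$, so a direct computation gives $(\id\otimes\pi)\Phi_A(a)=a\otimes 1$, i.e. $a\in\mathcal{A}_R$. Its row word is $g_1g_3^{-1}g_3g_2^{-1}=g_1g_2^{-1}\neq e$, so $(\pi\otimes\id)\Phi_A(a)=g_1g_2^{-1}\otimes a$, which is $\neq 1\otimes a$ as soon as $a\neq 0$; hence $a\notin\mathcal{A}_L$ and normality fails.

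The hard part will be verifying $a\neq 0$, and this is the only place where $N\geq 4$ enters essentially. Note that the naive shorter witnesses such as $v_{i1}v_{k1}^{*}$ with $i\neq k$ vanish by the orthogonality relations of Remark \ref{rem: orthogonality}, which is precisely why a length-four word with the spacer $q_{34}=v_{34}^{*}v_{34}$ is needed. To detect non-vanishing I would push $a$ through the representation $\rho:C(\QG_{\mathbf{1}}^{\infty})\to B(K)$ built in the proof of Proposition \ref{prop:magicinside}. Using $w_{ij}=\sigma P_{ij}^{(0)}=P_{ij}^{(1)}\sigma$ together with $\sigma P_{ij}^{(0)}\sigma^{*}=P_{ij}^{(1)}$, the shifts telescope and one computes $\rho(a)=P_{11}^{(1)}P_{34}^{(1)}P_{21}^{(1)}$, with all three projections supported on the single tensor factor $H$. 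It then remains to choose the representation $C(\mathbb{S}_N^{+})\to B(H)$ so that $P_{11}P_{34}P_{21}\neq 0$.

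This last step cannot be done with a classical permutation matrix, since $P_{11}$ and $P_{21}$ constrain the same coordinate and commute with $P_{34}$, forcing the product to vanish; so a genuinely quantum magic unitary is required, which exists precisely because $\mathbb{S}_N^{+}$ is non-classical for $N\geq 4$. Concretely I would take $H=\mathbb{C}^{2}$, two non-commuting projections $p,q$, and the magic unitary
$$\begin{pmatrix} p & 1-p & 0 & 0 \\ 1-p & p & 0 & 0 \\ 0 & 0 & q & 1-q \\ 0 & 0 & 1-q & q \end{pmatrix}$$
(padded with $1$'s on the remaining diagonal entries when $N>4$). Here $P_{11}P_{34}P_{21}=p(1-q)(1-p)$, which is nonzero for suitable non-commuting $p,q$ (for instance $p$ the projection onto a coordinate line and $q$ a projection onto a generic line). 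Consequently $\rho(a)\neq 0$, so $a\neq 0$, giving $a\in\mathcal{A}_R\setminus\mathcal{A}_L$; therefore $\mathcal{A}_R\neq\mathcal{A}_L$ and $\widehat{\mathbb{F}_N}$ is not normal in $\QG_{\mathbf{1}}^{\infty}$.
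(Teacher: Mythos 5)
Your proof is correct and follows essentially the same route as the paper: the same coset-algebra characterization of normality, the same computation of the two coactions on generators, a witness element of the same shape $v_{\cdot\cdot}\,(v_{\cdot\cdot}^*v_{\cdot\cdot})\,v_{\cdot\cdot}^*$ (the paper takes $f=v_{kl}v_{ii}^*v_{ii}v_{ll}^*$ with $i,k,l$ pairwise distinct, which is structurally identical to your $v_{11}v_{34}^*v_{34}v_{21}^*$), and non-vanishing verified by pushing the element through the representation $\rho$ of Proposition \ref{prop:magicinside} to reduce to non-vanishing of a product of three projections in $C(\mathbb{S}_N^+)$. The only difference is that you make the last step explicit with the block-diagonal $4\times 4$ magic unitary built from two non-commuting projections, where the paper simply invokes the standard free-projections model of $C(\mathbb{S}_N^+)$ for $N\geq 4$.
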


\begin{proof}
Recall that a compact quantum subgroup $\QH$ of a compact quantum group $\QG$, given by the surjective Hopf *-morphism
$q: \Pol(\QG) \to \Pol(\QH)$, is called \emph{normal} if the respective (algebraic) algebras of left/right $\QH$-invariant functions, $\Pol(\QG/\QH)$ and $\Pol(\QH\setminus \QG)$ coincide, where
\[ \Pol(\QG/\QH) = \{f \in \Pol(\QG): (\id \ot q)(\Delta(f)) = f \otimes 1_\QH\},\]
\[ \Pol(\QH\setminus \QG) = \{f \in \Pol(\QG): (q \ot \id)(\Delta(f)) = 1_\QH \otimes f\}.\]
(see for example \cite{wang14normal}).
Set then $\QG=  \QG_{\mathbf{1}_N}^\infty$, $\QH= \widehat{\mathbb{F}_N}$ and use the notation of the proof of Proposition \ref{prop:quantumgauge}, with $q(v_{ij}) = \delta_{ij} u_i$, $i,j=1, \ldots,N$.
Then it is easy to see that for all $i,j=1, \ldots, N$ we have
\[ (\id \ot q)(\Delta(v_{ij})) =v_{ij} \otimes u_j, \]
\[ (q \ot \id)(\Delta(v_{ij})) =u_i \otimes v_{ij}. \]
Then we have that  
\[  \tilde{B} \subset \Pol(\QG/\QH) \cap  \Pol(\QH\setminus \QG), \]
where $\tilde{B}$, the $^*$-algebra generated by $\{v_{ij}^* v_{ij}, v_{kl}v_{kl}^*:i,j,k,l=1, \ldots,N \}$ is the algebraic version of the algebra introduced before Proposition \ref{prop:magicinside}. The argument below shows in particular that the inclusion is strict.
	
Let then $i,k,l \in \{1, \ldots,N\}$ be pairwise different and consider the element $f=v_{kl} v_{ii}^* v_{ii} v_{ll}^*$. The above formulas show immediately that
	\[ (\id \ot q)(\Delta(f)) = f \otimes u_l (u_i)^* u_i u_l^* = f \otimes 1,\]
	\[  (q\ot \id)(\Delta(f))  = u_k u_i u_i^* u_l^* \otimes f = u_ku_l^*  \otimes f.\]
Thus $f \in \Pol(\QG/\QH)$, and as $u_ku_l^*$ is not a scalar multiple of $1$ to deduce that $f \notin \Pol(\QH\setminus \QG) $ it is enough to show that $f \neq 0$. To this end, consider the $*$-homomorphism $\rho:C(\QG_{\mathbf{1}_N}^{\infty}) \to B(K)$ constructed in the proof of Proposition \ref{prop:magicinside}. It is easy to check that 
	\[ \rho(f) = \sigma (P_{kl} P_{ii} P_{ll})^{(0)} \sigma^*.\]
The latter is however non-zero, as 	$P_{kl} P_{ii} P_{ll}$ is non-zero in $C(\mathbb S_N^+)$ (recall that $N\geq 4$ and use the standard free projections model).	
\end{proof}

Observe an elementary fact: from the relation $Ap = qA$ and the fact that $p$ and $q$ commute, which can be expressed as $(p\otimes q)\Sigma = \Sigma(q\otimes p)$ with the flip map $\Sigma$ on $\mathbb{C}^{N}\otimes \mathbb{C}^{N}$, we conclude that the operator
\begin{equation*}
T_{A} = (A^{t}\otimes A)\Sigma	
\end{equation*}
intertwines $p\otimes q$ with itself. In other words, the $C^*$-algebra $B$ defined before Proposition \ref{prop:magicinside} is a quotient of $C(\QAut(T_{A}))$:
\begin{equation*}
C(\QAut(T_{A})) \twoheadrightarrow B.
\end{equation*}
This leads to the following key observation: if $\QAut(T_{A})$ is trivial, then $B = \mathbb{C} 1$, meaning that $v_{ij}^{*}v_{ij} = \delta_{ij}1$, $i,j=1, \ldots,N$. In other words, the off-diagonal coefficients vanish and diagonal coefficients are unitary. This means that the surjection $\pi : \CAi\to C(\widehat{\F}_{N})$ is injective. 

This implication cannot be reversed in general.

\begin{prop}\label{prop:cex_generation}
There exists a primitive matrix $A\in M_2$ such that $\GAi = \widehat{\F}_{2}$ but $\QAut(T_{A})$ is non-trivial.
\end{prop}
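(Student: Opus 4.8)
The plan is to take $A=\begin{pmatrix}1&1\\1&0\end{pmatrix}$, the adjacency matrix of the Fibonacci graph. It is primitive, since $A^{2}=\begin{pmatrix}2&1\\1&1\end{pmatrix}$ is strictly positive, and it is symmetric, a property I will exploit in the second half. Its Perron--Frobenius eigenvector is $\vec u=(\phi,1)$ with $\phi=(1+\sqrt5)/2$, and the essential point is that its two entries are \emph{distinct} (equivalently, the graph is not out-regular).

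To see that $\GAi=\widehat{\F}_{2}$, I would show that relations (I)--(II) already force $p=q=\mathrm{Id}$. Writing the generators of $\CAi$ as $v_{ij}$ with $p_{ij}=v_{ij}v_{ij}^{*}$, condition (I) reads $\sum_{j}p_{ij}u_{j}=u_{i}1$. Multiplying on the right by $p_{ik}$ and using the row-orthogonality $p_{ij}p_{ik}=\delta_{jk}p_{ik}$ gives $(u_{k}-u_{i})p_{ik}=0$; since $u_{1}\neq u_{2}$ this forces $p_{12}=p_{21}=0$, hence $p_{11}=p_{22}=1$, i.e.\ $p=\mathrm{Id}$. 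The identical argument applied to $q$ (which preserves $\vec u$ by (I)) gives $q=\mathrm{Id}$. Thus $v_{12}=v_{21}=0$, while $v_{11},v_{22}$ have trivial range and source projections and so are unitaries; relation (III) is then vacuous, every nonzero product of diagonal generators being a product of unitaries. Hence $\CAi$ is the universal $C^{*}$-algebra on two free unitaries, namely $C^{*}(\F_{2})=C(\widehat{\F}_{2})$, with the matching coproduct, so $\GAi=\widehat{\F}_{2}$.

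For the non-triviality of $\QAut(T_{A})$ I would exhibit a nontrivial classical symmetry of $T_{A}$. Since $A=A^{t}$, one has $T_{A}=(A\otimes A)\Sigma$, and because $\Sigma(A\otimes A)=(A\otimes A)\Sigma$ and $\Sigma^{2}=\mathrm{Id}$, a one-line check gives $\Sigma T_{A}=A\otimes A=T_{A}\Sigma$; concretely $T_{A}=\begin{pmatrix}1&1&1&1\\1&1&0&0\\1&0&1&0\\1&0&0&0\end{pmatrix}$ in the ordered basis $e_{11},e_{12},e_{21},e_{22}$ of $\mathbb{C}^{2}\otimes\mathbb{C}^{2}$, and $\Sigma$ is the transposition interchanging $e_{12}$ and $e_{21}$. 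Thus $\Sigma$ is a nonidentity permutation matrix commuting with $T_{A}$, i.e.\ a nontrivial element of $\Aut(T_{A})$. Passing to the abelianisation $C(\QAut(T_{A}))\twoheadrightarrow C(\Aut(T_{A}))$, the target has dimension at least $2$, so $\QAut(T_{A})$ is non-trivial, establishing the desired counterexample to the reverse implication.

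I do not anticipate a deep obstacle; the only step needing genuine care is the forcing $p=q=\mathrm{Id}$, which relies on combining the row/column orthogonality of the magic unitaries with the simplicity of $\lambda_{\max}$ (this is precisely what makes $\vec u$ non-constant and is false in the out-regular, e.g.\ Cuntz, case). The conceptual heart of the example is that the flip symmetry of $T_{A}$ is an artefact of $A$ being symmetric and is entirely invisible to $\GAi$, which is what lets the implication ``$\QAut(T_{A})$ trivial $\Rightarrow \GAi=\widehat{\F}_{N}$'' fail to reverse.
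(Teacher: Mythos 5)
Your proof is correct, but the first half takes a genuinely different route from the paper's. To force $p=q=\mathrm{Id}$, the paper never invokes the Perron--Frobenius eigenvector: it only uses relation (II), writing out $Ap=qA$ entrywise for this specific $A$ and using the fact that rows and columns of a magic unitary sum to $1$ (this immediately gives $p_{11}=q_{11}=1$, hence $p=q=\mathrm{Id}$). You instead use only relation (I): multiplying $\sum_j p_{ij}u_j=u_i1$ by $p_{ik}$ and using row-orthogonality gives $(u_k-u_i)p_{ik}=0$, so the distinctness of the entries of $\vec u$ kills the off-diagonal projections. Both arguments are short and valid, and they are complementary in an interesting way: yours shows more generally that $\GAi=\widehat{\F}_N$ whenever the Perron--Frobenius eigenvector of $A$ has pairwise distinct entries, while the paper's shows that for this $A$ the collapse already follows from (II) and magic unitarity alone, without assuming (I) -- a point of independent interest given Remark \ref{rem: PFvector}'s open question about whether (I) is implied by the remaining relations. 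For the second half you use the same witness as the paper, namely the permutation exchanging the basis vectors $e_{12}$ and $e_{21}$ (written $(1,3,2,4)$ in the paper); the only difference is that you justify the commutation conceptually, via $A=A^t$ and $\Sigma(A\otimes A)=(A\otimes A)\Sigma$, $\Sigma^2=\mathrm{Id}$, where the paper appeals to a direct computation. Your final step -- passing through the abelianisation $C(\QAut(T_A))\twoheadrightarrow C(\Aut(T_A))$ to conclude non-triviality -- is exactly the standard argument the paper implicitly uses.
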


\begin{proof}
Let us set
\begin{equation*}
A = \left(\begin{array}{cc}
1 & 1  \\
1 & 0 \\	
\end{array}\right)
\text{ so that }
T_{A} = \left(\begin{array}{cccc}
1 & 1 & 1 & 1 \\
1 & 1 & 0 & 0 \\
1 & 0 & 1 & 0 \\
1 & 0 & 0 & 0	
\end{array}\right).
\end{equation*}
It is easy to check that $\QAut(A)=\Aut(A)$ is trivial. The relation $Ap=qA$ reads (remembering that sums over rows and columns of a magic unitary equal the unit)
\begin{equation*}
\left(\begin{array}{cc}
1 & 1 \\
p_{11} & p_{12}	
\end{array}\right)=
\left(\begin{array}{cc}
1 & q_{11} \\
1 & q_{21}	
\end{array}\right)
.
\end{equation*}
It follows that $p_{11} = 1$, implying $p_{12} = p_{21} = 0$ and $p_{22} = 2$, and that $q_{11} = 1$, yielding also $q_{12} = q_{21} = 0$ and $q_{22} = 1$. In other words, $\GAi = \widehat{\F}_{2}$. Nevertheless, a direct computation shows that the permutation $(1, 3, 2, 4)$ commutes with $T_{A}$, so that $\QAut(T_{A})$ is non-trivial.
\end{proof}

We do not know yet an example of a matrix $A$ for which $\QAut(A)$ is trivial and yet $\GAi \neq \widehat{\F}_{N}$. This is related to the question below, where `generated' means \emph{topologically generated} in the sense of \cite{brannan17connes}. We ask it for $\QG_A^1$ in view of Proposition \ref{prop:QAutSubgroup}.

\begin{quest}
Is $\QG_A^1$ always generated by $\widehat{\F}_{N}$ and $\QAut(A)$?
\end{quest}



\subsection{Ergodicity}\label{sec:Erg}

Classically, the action of the isometry group on $O_{A}$ can never be ergodic (see below for a proof). Nevertheless, since as we have seen the quantum isometry group behaves differently from the classical one, it makes sense to investigate the potential ergodicity of the action. Recall that given an action $\varphi: B \to C(\QG) \otimes B$ of a compact quantum group on a $C^*$-algebra $B$, we denote
\[ \Fix\,\varphi=\{ a \in B: \varphi (a) = 1 \otimes a\}\]
and say that $\varphi$ is \emph{ergodic} if $\Fix\, \varphi = \mathbb{C} 1$. Suppose that $\QH$ is a quantum subgroup of $\QG$, given by a surjective map $q: \Pol(\QG) \to \Pol(\QH)$, and that $\varphi$ is an action as above; then the action $\varphi$  descends to an action $\varphi_\QH$ of $\QH$ essentially given by the formula $\varphi_\QH= (q \otimes \id)\varphi$ (see for instance \cite{commer17actions} for details concerning actions of compact quantum groups). This is useful since it induces an inclusion
\[ \Fix \, \varphi_\QH \supset \Fix\, \varphi.\]
In particular if $\varphi_\QH$ is ergodic, then so is $\varphi$. We already mentioned that the action of the classical isometry group is not ergodic, let us prove it for completeness (noting also that it can be deduced from the main result of \cite{HKLS}).

\begin{prop}\label{prop:no_erg}
The action of $\mathbb{T}\wr \mathrm{Aut}(A)$	on $O_{A}$ is never ergodic.
\end{prop}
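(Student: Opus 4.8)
The plan is to produce a single nonzero, non-scalar element of $O_A$ fixed by the whole group $\mathbb{T}\wr\Aut(A)$, which directly contradicts ergodicity. The natural place to search is the commutative ``diagonal'' subalgebra $\mathcal{D} := \overline{\mathrm{span}}\{S_\alpha S_\alpha^* : \alpha\in V_A\}\cong C(\Sigma_A)$ recalled at the end of Section~\ref{sec:preliminaries}, because its generators are automatically invariant under the gauge part of the group. Indeed, writing the torus action as the (multiparameter) gauge action $S_i\mapsto z_iS_i$, each projection transforms as $S_\alpha S_\alpha^*\mapsto |z_{\alpha_1}|^2\cdots|z_{\alpha_n}|^2\,S_\alpha S_\alpha^* = S_\alpha S_\alpha^*$, so that $\mathcal{D}$ lies entirely in the fixed-point algebra of $\mathbb{T}^N$. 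It then remains to find an element of $\mathcal{D}$ that is in addition fixed by the finite group $\Aut(A)$.

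First I would record how a graph automorphism $\sigma\in\Aut(A)$ acts on these generators: one checks that $S_i\mapsto S_{\sigma(i)}$ respects the Cuntz--Krieger relations precisely because $A_{\sigma(i)\sigma(j)}=A_{ij}$, and hence $S_\alpha S_\alpha^*\mapsto S_{\sigma(\alpha)}S_{\sigma(\alpha)}^*$ with $\sigma(\alpha)=\sigma(\alpha_1)\cdots\sigma(\alpha_n)$. Since $\sigma$ preserves admissibility and length, this is a permutation of $V_A^n$ for each $n$; under $\mathcal{D}\cong C(\Sigma_A)$ it is just the coordinatewise action of $\Aut(A)$ on the Cantor set $\Sigma_A$. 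Because $\mathcal{D}$ is preserved by $\Aut(A)$ and sits inside the gauge-fixed algebra, the fixed-point algebra $\Fix\,\varphi$ of the full group contains $\mathcal{D}^{\Aut(A)}\cong C(\Sigma_A/\Aut(A))$.

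The decisive step is then a counting argument. Since $A$ is primitive, $|V_A^n|$ grows like $\lambda_{\max}^n\to\infty$, whereas $|\Aut(A)|\leq N!$ is a fixed finite bound, so every $\Aut(A)$-orbit in $V_A^n$ has bounded size and the number of orbits tends to infinity. Fixing $n$ large enough that there are at least two orbits, I would pick one orbit $\mathcal{O}\subsetneq V_A^n$ and set $a:=\sum_{\alpha\in\mathcal{O}}S_\alpha S_\alpha^*$. This is a sum of mutually orthogonal nonzero projections, hence itself a projection; it is gauge-invariant termwise and $\Aut(A)$-invariant since $\sigma$ permutes $\mathcal{O}$ within itself; and as $\sum_{\alpha\in V_A^n}S_\alpha S_\alpha^*=1$ with $\mathcal{O}$ proper and nonempty, we get $0\neq a\neq 1$. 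Thus $a\in\Fix\,\varphi\setminus\mathbb{C}1$ and the action is not ergodic. Equivalently, $\Sigma_A/\Aut(A)$ is an infinite compact space, so $C(\Sigma_A/\Aut(A))\neq\mathbb{C}1$.

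There is no genuine conceptual obstacle here: the argument is essentially the topological fact that a finite group cannot act transitively on an infinite compact space. The only points needing routine care are the verification that $S_i\mapsto S_{\sigma(i)}$ really implements the $\Aut(A)$-part of the isometric action (this is where the graph-automorphism condition $A_{\sigma(i)\sigma(j)}=A_{ij}$ enters), and the bookkeeping showing that fixed points of $\mathbb{T}\wr\Aut(A)$ can be computed as $\Aut(A)$-fixed points inside the gauge-fixed algebra. Both are immediate from the semidirect-product description $\mathbb{T}\wr\Aut(A)=\mathbb{T}^N\rtimes\Aut(A)$ and its action recalled in Subsection~\ref{Sec:classical}.
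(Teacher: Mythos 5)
Your proof is correct and follows essentially the same route as the paper's: both arguments exhibit a non-scalar fixed projection of the form $\sum_{\alpha\in F}S_\alpha S_\alpha^*$ for a proper, nonempty, $\Aut(A)$-invariant subset $F\subset V_A^n$, using that each $S_\alpha S_\alpha^*$ is gauge-invariant and that these projections are nonzero, mutually orthogonal and sum to $1$. The only difference is the choice of $F$ --- the paper takes the set of cycles of length $k$, while you take a single $\Aut(A)$-orbit whose properness follows from comparing the growth of $|V_A^n|\sim\lambda_{\max}^n$ with the bound $|\Aut(A)|\leq N!$ --- which is an equally valid (and arguably more robust) selection of the invariant set.
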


\begin{proof}
Because $A$ is primitive, there is $k\in \N$ such that all the entries of $A^{k}$ are strictly positive. In particular, there exist cycles of length $k$ in $A$, and we may also assume that not all paths in $V_A^k$ are cycles. Let $C_{k}$ denote the set of cycles of length $k$, and set
\begin{equation*}
x_{k} = \sum_{\alpha\in C_{k}}S_{\alpha}S_{\alpha}^{*}.
\end{equation*}
It is then easy to see that $x_k$ is a projection fixed by $\mathbb{T}\wr \mathrm{Aut}(A)$, which cannot be trivial, as seen from the isomorphism described in the last paragraph of Section \ref{sec:preliminaries}.
\end{proof}

Turning to the quantum case, we first observe that ergodicity can be ``checked on a commutative subalgebra'' in the  sense described in the next proposition. We will use without any further comments the isomorphism from the last paragraph of Section \ref{sec:preliminaries}, writing simply $C(\Sigma_A)\subset O_A$.

\begin{prop}\label{prop:commfix}
	For $\varphi:O_A \to C(\QG_A^\infty) \otimes O_A$ we have $\Fix \, \varphi \subset C(\Sigma_A)$. 
\end{prop}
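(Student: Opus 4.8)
The plan is to exploit the quantum gauge symmetry, namely the quantum subgroup $\widehat{\F}_N\leqslant \QG_A^\infty$ furnished by Proposition \ref{prop:quantumgauge}, which equips $O_A$ with an $\F_N$-grading fine enough to force a fixed element into $C(\Sigma_A)$. Recall that $\varphi=\varphi_A$ is determined on generators by $\varphi(S_\alpha)=\sum_{\alpha'} u_{\alpha,\alpha'}\otimes S_{\alpha'}$.

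First I would pass to the descended action. Writing $q:\Pol(\QG_A^\infty)\to \mathbb{C}[\F_N]$ for the surjection $u_{ij}\mapsto \delta_{ij}g_i$ of Proposition \ref{prop:quantumgauge}, the action $\varphi$ descends to an action $\varphi_{\widehat{\F}_N}=(q\otimes \id)\circ\varphi$ of $\widehat{\F}_N$ on $O_A$, and one has the inclusion $\Fix\,\varphi\subset \Fix\,\varphi_{\widehat{\F}_N}$. Hence it suffices to prove $\Fix\,\varphi_{\widehat{\F}_N}\subset C(\Sigma_A)$. On the generators one computes $\varphi_{\widehat{\F}_N}(S_\alpha)=g_\alpha\otimes S_\alpha$, where $g_\alpha:=g_{\alpha_1}\cdots g_{\alpha_n}$ for $\alpha=\alpha_1\cdots\alpha_n$, and consequently
$$\varphi_{\widehat{\F}_N}(S_\alpha S_\beta^*)=g_\alpha g_\beta^{-1}\otimes S_\alpha S_\beta^*,\qquad \alpha,\beta\in V_A.$$
In other words, $\varphi_{\widehat{\F}_N}$ is the coaction implementing the natural $\F_N$-grading of $O_A$ in which $S_\alpha S_\beta^*$ is homogeneous of degree $g_\alpha g_\beta^{-1}$.

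Next I would identify the identity component of this grading via the averaging conditional expectation $E:=(h\otimes \id)\circ\varphi_{\widehat{\F}_N}:O_A\to O_A$, where $h$ is the Haar state of $\widehat{\F}_N$, i.e.\ the canonical trace with $h(g)=\delta_{g,e}$ for $g\in\F_N$. This $E$ is a norm-one projection onto $\Fix\,\varphi_{\widehat{\F}_N}$, and on the dense $*$-subalgebra spanned by $\{S_\alpha S_\beta^*:\alpha,\beta\in V_A\}$ it acts by
$$E(S_\alpha S_\beta^*)=h(g_\alpha g_\beta^{-1})\,S_\alpha S_\beta^*=\delta_{g_\alpha,g_\beta}\,S_\alpha S_\beta^*.$$
The crucial point is that $g_\alpha$ and $g_\beta$ are positive, hence already reduced, words in the free generators, so $g_\alpha=g_\beta$ in $\F_N$ forces $\alpha=\beta$ letter by letter. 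Therefore $E(S_\alpha S_\beta^*)=\delta_{\alpha,\beta}S_\alpha S_\alpha^*$, so $E$ maps the dense $*$-subalgebra into $\mathrm{span}\{S_\alpha S_\alpha^*:\alpha\in V_A\}$. By continuity of $E$ its range lies in $\overline{\mathrm{span}}\{S_\alpha S_\alpha^*:\alpha\in V_A\}=C(\Sigma_A)$; since the range of $E$ is exactly $\Fix\,\varphi_{\widehat{\F}_N}$, we conclude $\Fix\,\varphi_{\widehat{\F}_N}\subset C(\Sigma_A)$, and thus $\Fix\,\varphi\subset C(\Sigma_A)$.

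The only genuinely delicate point is the use of freeness: it is precisely the non-commutativity of $\F_N$ (equivalently, the genuinely quantum nature of $\widehat{\F}_N$) that pins down $\alpha=\beta$ rather than merely equal letter-multiplicities — the classical multiparameter gauge $\T^N$ would detect only the abelianised degree and yield a strictly larger fixed-point algebra. The remaining work is the standard verification that the Haar-state slice $E$ is a continuous conditional expectation onto the fixed-point algebra, which I would record but not dwell on.
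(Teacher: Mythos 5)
Your proposal is correct and follows essentially the same route as the paper's proof: restricting to the quantum gauge subgroup $\widehat{\F}_N$ via the inclusion $\Fix\,\varphi\subset\Fix\,\varphi_{\widehat{\F}_N}$, computing the Haar-state conditional expectation $E(S_\alpha S_\beta^*)=\delta_{g_\alpha,g_\beta}S_\alpha S_\beta^*$, and using that positive words in the free generators coincide only when $\alpha=\beta$, so that the fixed-point algebra of $\varphi_{\widehat{\F}_N}$ is exactly $\overline{\mathrm{span}}\{S_\alpha S_\alpha^*:\alpha\in V_A\}=C(\Sigma_A)$. The only cosmetic difference is that you make the freeness argument and the reduction to $\widehat{\F}_N$ explicit, whereas the paper cites the general subgroup-descent discussion preceding its Proposition on non-ergodicity.
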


\begin{proof}
The argument is as follows: the corresponding action is determined by the formula 
\[ \varphi_{\widehat{\F}_{N}}(S_\mu S_\nu^*) = u_\mu u_\nu^*\otimes S_\mu S_\nu^* , \; \; \mu, \nu \in V_A,\]
where for the word $\mu=\mu_1 \cdots \mu_n \in V_A$ we write $u_\mu := u_{\mu_1} \cdots u_{\mu_n} \in \mathbb{F}_N \subset C(\widehat{\mathbb{F}_N})$. Consider the canonical conditional expectation $\mathbb{E}: O_A \to \Fix\, \varphi_{\widehat{\F}_{N}}$, given by the formula $\mathbb{E}= (h_{\widehat{\F}_{N}}\otimes \id)\circ \varphi_{\widehat{\F}_{N}}$. The Haar state of $\widehat{\mathbb{F}}_N$ is given simply by the standard trace on the group algebra, so that for  $g \in \mathbb{F}_N$ we have $h_{\widehat{\F}_{N}}(g) = \delta_{g,e}$. Putting these facts together shows that 
\[\mathbb{E} (S_\mu S_\nu^*) = \begin{cases} S_\mu S_\mu^* & \textup{ if } \mu =\nu, \\ 0 & \textup{otherwise.}\end{cases} \]
Thus by continuity and the discussion above the proposition we have 
\[ \Fix \, \varphi \subset \Fix \, \varphi_{\widehat{\F}_{N}} = \overline{\textup{Lin}} \{S_\mu S_\mu^*: \mu \in V_A\} = C(\Sigma_A).  \qedhere\]
\end{proof}

We will use this to prove that ergodicity does hold under some extra assumptions. To do this, we first need to understand further the action on $X_{A}$, thanks to the next proposition.

\begin{prop}\label{prop:comminv}
For $\varphi:O_A \to C(\QG_A^\infty) \otimes O_A$ we have $\varphi ( C(\Sigma_A))  \subset C(\QG_A^\infty) \otimes C(\Sigma_A) $. In other words, the action of  $\QG_A^\infty$ on $O_A$ leaves $C(\Sigma_A)$ invariant (whilst the corresponding action of 
$\QG_A^1$ in general does not). Moreover, if we denote (for $k \in \N$) by $C(\Sigma_A^k)$ the algebra of functions 
on $\Sigma_A$ which are constant on all cylinder sets $Z_\mu$ with $\mu \in V_A, |\mu|=k$, then we also have
$\varphi ( C(\Sigma_A^k))  \subset C(\QG_A^\infty) \otimes C(\Sigma_A^k) $. 
\end{prop}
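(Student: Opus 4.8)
The plan is to reduce everything to a direct computation of the canonical action $\varphi=\varphi_A$ on the range projections $S_\alpha S_\alpha^*$, exploiting that for $\ell=\infty$ the orthogonality relations of Lemma \ref{lem: words_of_s} hold for words of all lengths. Recall that, under the identification from the last paragraph of Section \ref{sec:preliminaries}, $C(\Sigma_A)$ is the closed linear span of $\{S_\alpha S_\alpha^*:\alpha\in V_A\}$, and that $\varphi_A(S_\alpha)=\sum_{\alpha'\in V_A^{|\alpha|}}u_{\alpha,\alpha'}\otimes S_{\alpha'}$, the restriction of the summation range to admissible $\alpha'$ being justified by Lemma \ref{lem: words_of_s_0}, exactly as in Proposition \ref{prop:actions}.

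First I would compute, for a fixed $\alpha\in V_A$,
\[
\varphi_A(S_\alpha S_\alpha^*)=\varphi_A(S_\alpha)\varphi_A(S_\alpha)^*=\sum_{\alpha',\alpha''\in V_A^{|\alpha|}}u_{\alpha,\alpha'}(u_{\alpha,\alpha''})^*\otimes S_{\alpha'}S_{\alpha''}^*.
\]
The key step is then to kill all off-diagonal terms: since $\ell=\infty$, property (III) of Definition \ref{defn: Ariadne_0} makes each $u_{\alpha,\alpha'}$ a partial isometry with range projection $p_{\alpha,\alpha'}$, and the orthogonality relation \eqref{eq:orthogonality2} of Lemma \ref{lem: words_of_s} gives $u_{\alpha,\alpha'}(u_{\alpha,\alpha''})^*=0$ whenever $\alpha'\neq\alpha''$. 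Hence the double sum collapses to
\[
\varphi_A(S_\alpha S_\alpha^*)=\sum_{\alpha'\in V_A^{|\alpha|}}p_{\alpha,\alpha'}\otimes S_{\alpha'}S_{\alpha'}^*,
\]
and since every $S_{\alpha'}S_{\alpha'}^*$ lies in $C(\Sigma_A)$, the right-hand side lies in $\CAi\ot C(\Sigma_A)$. As the elements $S_\alpha S_\alpha^*$ span a dense subspace of $C(\Sigma_A)$ and $\varphi_A$ is a continuous $*$-homomorphism, this yields $\varphi_A(C(\Sigma_A))\subset\CAi\ot C(\Sigma_A)$.

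The refined statement for $C(\Sigma_A^k)$ requires no new work: $C(\Sigma_A^k)$ is the finite-dimensional span of $\{S_\mu S_\mu^*:\mu\in V_A^k\}$, and in the collapsed formula above, when $|\alpha|=k$ every summand word $\alpha'$ again has length $k$, so each tensor factor $S_{\alpha'}S_{\alpha'}^*$ lies in $C(\Sigma_A^k)$; invariance of $C(\Sigma_A^k)$ follows at once. Finally, the parenthetical assertion that $\QG_A^1$ need not leave $C(\Sigma_A)$ invariant is explained by precisely the place where $\ell=\infty$ was used: for $\ell=1$ the element $u_{\alpha,\alpha'}$ with $|\alpha|\geq 2$ need not be a partial isometry, and the off-diagonal products $u_{\alpha,\alpha'}(u_{\alpha,\alpha''})^*$ with $\alpha'\neq\alpha''$ can survive, leaving genuine off-diagonal terms $S_{\alpha'}S_{\alpha''}^*\notin C(\Sigma_A)$.

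I do not anticipate a serious obstacle here; the only point demanding care is confirming that the orthogonality relations \eqref{eq:orthogonality2} indeed extend to words of arbitrary length -- which is exactly the content of Lemma \ref{lem: words_of_s} in the regime $\ell=\infty$ -- and, correspondingly, pinpointing where the argument fails at the first level, which is what justifies the parenthetical remark.
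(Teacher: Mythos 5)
Your proposal is correct, and it establishes exactly the same key formula as the paper's proof (equation \eqref{alphaSmu}): $\varphi(S_\mu S_\mu^*)=\sum_{\nu\in V_A^{|\mu|}} w_{\mu,\nu}\otimes S_\nu S_\nu^*$, where the paper's $w_{\mu,\nu}=v_{\mu_1,\nu_1}\cdots v_{\mu_k,\nu_k}v_{\mu_k,\nu_k}^*\cdots v_{\mu_1,\nu_1}^*$ is your $p_{\mu,\nu}$. The routes to it differ. The paper argues by induction on $|\mu|$: it writes $\varphi(S_{i\mu}S_{i\mu}^*)=\varphi(S_i)\varphi(S_\mu S_\mu^*)\varphi(S_i)^*$ and kills the cross terms $j\neq l$ using property (III) of Definition \ref{defn: Ariadne_0} in the form of the commutation $[v_{i,j}^*v_{i,j},w_{\mu,\nu}]=0$, together with the level-one orthogonality of Remark \ref{rem: orthogonality}. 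You instead expand $\varphi_A(S_\alpha)\varphi_A(S_\alpha)^*$ in one shot and annihilate the off-diagonal terms by the orthogonality relations \eqref{eq:orthogonality2}, which hold for words of arbitrary length exactly because $\ell=\infty$ (Lemma \ref{lem: words_of_s} applies for all $n\leq\ell$). This is legitimate, since Lemma \ref{lem: words_of_s} is proved beforehand and independently of this proposition; in effect the induction that the paper performs here is already contained in the proof of that lemma, whose part (1) -- the length-$n$ magic unitarity -- is what makes the entries of a row mutually orthogonal. What the paper's route buys is a self-contained computation displaying exactly how property (III) enters; what your route buys is brevity, and a cleaner isolation of the single point where $\ell=\infty$ is used, which you then invoke, just as the paper does, only to explain (not to prove) the parenthetical claim that the $\QG_A^1$-action need not preserve $C(\Sigma_A)$. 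Your handling of the two remaining points of care is also correct: the restriction of the summation in $\varphi_A(S_\alpha)$ to admissible words (justified by Lemma \ref{lem: words_of_s_0}, or alternatively by $S_{\alpha'}=0$ for inadmissible $\alpha'$), and the observation that the formula preserves word length, which immediately gives the refined statement for $C(\Sigma_A^k)$.
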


\begin{proof}
We shall show that for every $ k \in \N$ and $\mu \in V_A^k$ we have 
\begin{equation} \label{alphaSmu} \varphi(S_\mu S_\mu^*) = 
\sum_{\nu \in V_A^k}  w_{\mu, \nu} \otimes S_\nu S_\nu^*,\end{equation}
where $w_{\mu, \nu}:=v_{\mu_1, \nu_1}\cdots v_{\mu_k, \nu_k} v_{\mu_k, \nu_k}^* \cdots v_{\mu_1, \nu_1}^* \in\Pol(\QG_A^\infty)$ (for every $\nu \in V_A^k$).
As $C(\Sigma_A)= C^*\{S_\mu S_\mu^*: \mu \in V_A\}$, and $C(\Sigma_A^k)= C^*\{S_\mu S_\mu^*: \mu \in V_A^k\}$ ($k \in \N$), this will establish the proposition.
	
The proof of \eqref{alphaSmu} will proceed by induction on the length of $\mu$. If $i \in \{1, \cdots,N\}$, then
\[ \varphi(S_i S_i^*) = \sum_{j,k=1}^N v_{i,j} v_{i,k}^* \otimes S_j S_k^* = \sum_{j=1}^N v_{i,j} v_{i,j}^* \otimes S_j S_j^*,\]  
by Remark \ref{rem: orthogonality}, so that the claim holds for $|\mu|=1$. Suppose then that $k \in \N$ and \eqref{alphaSmu} holds for all $\mu \in V_A$ of length $k$. Let $\mu'\in V_A$, $|\mu'|=k+1$, and write $\mu' = i \mu$ for certain $i \in \{1, \cdots,N\}$, $\mu \in V_A^k$. We then have, by the induction assumption,
\begin{align*}
\varphi (S_{\mu'}S_{\mu'}^*) & = \varphi(S_i) \varphi (S_\mu S_\mu^*)\varphi(S_i^*) =
\sum_{j,l=1}^N \sum_{\nu \in V_A^k} v_{i,j} w_{\mu,\nu} v_{i,l}^* \otimes S_j S_\nu S_\nu^* S_l^*. 	
\end{align*}
But then for $\nu, \mu,j,l$ as above, when we note that $w_{\mu, \nu}$ is the range projection of the partial isometry 
$v_{\mu_1, \nu_1}\cdots v_{\mu_k, \nu_k}$, we see that
\[ v_{i,j} w_{\mu,\nu} v_{i,l}^* = v_{i,j} v_{i,j}^* v_{i,j} w_{\mu,\nu} v_{i,l}^* =  v_{i,j}  w_{\mu,\nu}  v_{i,j}^* v_{i,j} v_{i,l}^*,\]
and the latter is equal to 0 whenever $j \neq l$. Thus 
\begin{align*}
\varphi (S_{\mu'}S_{\mu'}^*) & =
\sum_{j=1}^N \sum_{\nu \in V_A^k} v_{i,j} w_{\mu,\nu} v_{i,j}^* \otimes S_j S_\nu S_\nu^* S_j^* = 
\sum_{\nu' \in V_A, |\nu'|=k+1} w_{\mu',\nu'} \otimes S_{\nu'} S_{\nu'}^*, 	
\end{align*}
where we used the fact that $v_{i,j} v_{\mu_1, \nu_1} =0$ unless $j\nu \in V_A$.
\end{proof}

With this at hand, we can now establish the ergodicity of the action of $\QG_{\bf{1}_N}^\infty$ on the Cuntz algebra $O_N$.

\begin{thm}\label{thm:Erg}
The action of $\QG_{\bf{1}_N}^\infty$ on $O_N$ is ergodic.
\end{thm}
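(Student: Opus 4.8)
The plan is to reduce the statement, level by level, to the multiplicity-one behaviour of the fundamental representation of the quantum permutation group. By Proposition~\ref{prop:commfix} we already know that $\Fix\,\varphi\subset C(\Sigma_N)$, and by Proposition~\ref{prop:comminv} the action $\varphi$ preserves each finite-dimensional subalgebra $C(\Sigma_N^k)$, with the explicit formula~\eqref{alphaSmu}. Since $\varphi$ preserves the faithful KMS state $\tau$, the map $E=(h\otimes\id)\circ\varphi$, where $h$ is the Haar state of $\QG_{\bf{1}}^\infty$, is a conditional expectation onto $\Fix\,\varphi$; and because $\varphi(C(\Sigma_N^k))\subset C(\QG_{\bf{1}}^\infty)\otimes C(\Sigma_N^k)$ it maps each $C(\Sigma_N^k)$ into itself. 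As $\bigcup_k C(\Sigma_N^k)$ is dense in $C(\Sigma_N)$ and $E$ is continuous, it will suffice to prove $\Fix\,\varphi\cap C(\Sigma_N^k)=\mathbb C1$ for every $k$: this gives $E(C(\Sigma_N^k))\subset\mathbb C1$, hence $E(C(\Sigma_N))\subset\mathbb C1$, and since every $x\in\Fix\,\varphi\subset C(\Sigma_N)$ satisfies $x=E(x)$ we will conclude $\Fix\,\varphi=\mathbb C1$.

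Next I would fix $k$ and expand a general element $x=\sum_{\mu\in\{1,\dots,N\}^k}c_\mu S_\mu S_\mu^*$. By~\eqref{alphaSmu}, the equation $\varphi(x)=1\otimes x$ is equivalent to the linear system $\sum_\mu c_\mu\,p_{\mu,\nu}=c_\nu 1$ for all $\nu$, where $p_{\mu,\nu}$ is the range projection of $v_{\mu_1\nu_1}\cdots v_{\mu_k\nu_k}$ and $(p_{\mu,\nu})_{\mu,\nu}$ is a magic unitary by Lemma~\ref{lem: words_of_s}. In representation-theoretic terms, $c=(c_\mu)$ is a fixed vector of this $N^k$-dimensional magic-unitary representation, and ergodicity at level $k$ is exactly the assertion that the only such $c$ are the constant ones.

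To pin down the fixed space I would feed this system through the $*$-homomorphism $\rho:C(\QG_{\bf{1}}^\infty)\to B(K)$ constructed in the proof of Proposition~\ref{prop:magicinside}, under which $\rho(p_{\mu,\nu})=P_{\mu_1\nu_1}^{(1)}\cdots P_{\mu_k\nu_k}^{(k)}$, the factors living on distinct, independent tensor legs and each family $(P_{ij}^{(l)})_{ij}$ generating its own copy of $C(\mathbb S_N^+)$. Applying $\rho$ turns the fixed-vector equations into $\sum_\mu c_\mu\,P_{\mu_1\nu_1}\otimes\cdots\otimes P_{\mu_k\nu_k}=c_\nu 1$ inside $\bigotimes_{l=1}^{k}C(\mathbb S_N^+)=C\bigl((\mathbb S_N^+)^{\times k}\bigr)$, which says precisely that $c\in(\mathbb C^N)^{\otimes k}$ is fixed by the external tensor product of the (transposed) fundamental representations of the $k$ copies of $\mathbb S_N^+$. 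The fixed space of such an external product is the tensor product of the individual fixed spaces; since the fundamental representation of $\mathbb S_N^+$ has one-dimensional fixed space (spanned by $(1,\dots,1)$), it follows that $c$ is constant, establishing $\Fix\,\varphi\cap C(\Sigma_N^k)=\mathbb C1$.

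The main obstacle is this last step: for $|\mu|\geq 2$ the projections $p_{\mu,\nu}$ are genuinely noncommutative products of the generators $v_{ij}$ and do not sit inside any evident single copy of $C(\mathbb S_N^+)$, so one cannot read off the fixed space directly from the magic unitary $p$. The device that overcomes this is exactly the shift realisation $\rho$ of Proposition~\ref{prop:magicinside}, which decouples these products onto independent tensor legs and thereby reduces the problem to the classical multiplicity-one fact for the quantum permutation group. Care will be needed only to check that the relevant identities, once transported by $\rho$, indeed hold in the honest tensor product $\bigotimes_{l=1}^{k}C(\mathbb S_N^+)$ realised faithfully on $H^{\otimes k}\subset K$.
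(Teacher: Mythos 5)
Your proof is correct, but it settles the crucial finite level $k$ by a genuinely different route than the paper. Both arguments perform the same reduction (Propositions \ref{prop:commfix} and \ref{prop:comminv}, formula \eqref{alphaSmu}, and linear independence of the projections $S_\mu S_\mu^*$, leading to the system $\sum_\mu c_\mu w_{\mu,\nu}=c_\nu 1$), and both invoke the shift realisation $\rho$ of Proposition \ref{prop:magicinside}, but they use it differently. The paper extracts from $\rho$ only the single fact that $w_{\mu,\nu}\neq 0$ for all $\mu,\nu\in V_{\bf{1}}^k$, and then argues entirely inside $C(\QG_{\bf{1}}^\infty)$ by positivity: it reduces to $x\geq 0$ (as $\Fix\,\varphi_k$ is a $*$-algebra), shows via an order argument with the smallest non-zero coefficient that all non-zero $c_\mu$ coincide, so that $x$ is up to scaling $\sum_{\mu\in F}p_\mu$ for some set $F$ of words, and then gets a contradiction by comparing $\sum_{\mu\in F}w_{\mu,\nu}=1$ with $\sum_{\mu\in V_{\bf{1}}^k}w_{\mu,\nu}=1$, all terms being non-zero. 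You instead transport the whole fixed-vector system through $\rho$, where the projections $w_{\mu,\nu}$ decouple into elementary tensors $P^{(1)}_{\mu_1\nu_1}\cdots P^{(k)}_{\mu_k\nu_k}$ on independent legs, and identify the solution space with the fixed space of the $k$-fold external tensor power of the fundamental representation of $\mathbb{S}_N^+$, which is one-dimensional because fixed spaces multiply under external tensor products and the fundamental representation of $\mathbb{S}_N^+$ fixes only the multiples of $(1,\dots,1)$. Your argument is sound: the transpose issue you flag is harmless (the entries are projections, so the transpose of the magic unitary equals its adjoint, and a unitary and its adjoint have the same fixed vectors), and the faithfulness point you defer is the standard fact that a minimal tensor product of faithful representations is faithful; in fact you could bypass the representation-theoretic lemma altogether by applying the product Haar state $h^{\otimes k}$ to your equation and using $h(P_{ij})=1/N$, which yields $c_\nu=N^{-k}\sum_\mu c_\mu$ for every $\nu$ at once. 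As for what each approach buys: the paper's is more elementary and self-contained, needing nothing beyond positivity and non-vanishing; yours is more conceptual, computes the fixed space of the level-$k$ magic unitary representation $(w_{\mu,\nu})$ itself rather than only that of the action, and makes transparent that the decoupling model of Proposition \ref{prop:magicinside} is the real engine -- which matches the paper's closing remark that constructing an analogous model of $\QG_A^\infty$ for general $A$ would essentially settle ergodicity in that case too.
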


\begin{proof}
If the action were not ergodic, the general theory of actions of compact quantum groups (see \cite{commer17actions}) would give us a canonical dense $*$-subalgebra of $C(\Sigma_{\bf{1}_N})$ on which the canonical Hopf $*$-algebra acts, and which contains an element $x$ which is fixed and not a multiple of the unit. It is clear by the definition that this canonical $*$-subalgebra is the one generated by the $*$-subalgebras $C(\Sigma_{\bf{1}_N}^{k})$ for $k\in \N$. Therefore, by virtue of Propositions \ref{prop:commfix} and \ref{prop:comminv}, it is enough to prove the following: for any $k \in\N$ the action $\varphi_k:C(\Sigma_{\bf{1}_N}^k) \to C(\QG_{\bf{1}_N}^\infty) \otimes C(\Sigma_{\bf{1}_N}^k)$ is ergodic. 

Note that the algebra $C(\Sigma_{\bf{1}_N}^k)$ is finite-dimensional and commutative, with minimal projections $p_\mu:=S_\mu S_\mu^*$ with $\mu \in V_{\bf{1}_N}^k$. As $\Fix \, \varphi_k$ is a $*$-subalgebra, it suffices to prove that if $x \in \Fix \, \varphi_k$ is non-negative, then it must be scalar. Consider then $x = \sum_{\mu \in V_{\bf{1}_N}^k} c_\mu p_\mu$, with some scalars $c_\mu \geq 0$. If $\varphi_k(x) = 1 \otimes x$ then, following the adapted notation, \eqref{alphaSmu} implies
\[ \sum_{\mu, \nu \in V_{\bf{1}_N}^k} c_\mu w_{\mu, \nu} \otimes p_\nu = \sum_{\mu \in V_{\bf{1}_N}^k} 1 \otimes c_\mu p_\mu.\]
As the projections $\{p_\mu: \mu \in V_{\bf{1}_N}^k\}$ are linearly independent, the displayed formula is equivalent to the equalities
\[ \sum_{\mu  \in V_{\bf{1}_N}^k} c_\mu w_{\mu, \nu}  = c_\nu 1, \;\;\; \nu \in V_{\bf{1}_N}^k.\]

We need now some elementary observations.
\begin{itemize}
\item The construction in the proof of Proposition \ref{prop:magicinside} shows that $v_{\mu_1, \nu_1}\cdots v_{\mu_k, \nu_k}\neq 0$ as soon as $\mu, \nu\in V_{\bf{1}_N}^{k}$. Therefore, $w_{\mu, \nu}\neq 0$ for all the terms in the sum.
\item The coefficients $c_{\mu}$ which are non-zero are all equal. Indeed, let $c_{\mu_{0}}$ be the smallest non-zero one. Then, for any $\nu\in V_{\bf{1}_N}^{k}$, $c_{\nu}w_{\nu, \mu_0}\leqslant c_{\mu_0}1$, which upon multiplying by $w_{\nu, \mu_0}$ yields $c_{\nu}\leqslant c_{\mu_0}$. By minimality we then have either $c_{\nu} = 0$ or $c_{\nu} = c_{\mu_0}$.
\end{itemize}

Summing up, $x$ must be (up to rescaling) of the form
\[x = \sum_{\mu \in F}  p_\mu, \]
where $F$ is a subset of $V_{\bf{1}_N}^k$. We may assume that $F$ is a non-empty proper subset of $V_{\bf{1}_N}^k$, as otherwise $x$ is scalar. It remains to prove that this leads to a contradiction. Note that now $\varphi(x)=1 \otimes x$ means that
\[ \sum_{\mu \in F} w_{\mu, \nu} = 1\]
for every $\nu \in F$. We know that $\sum_{\mu \in V_{\bf{1}_N}^k} w_{\mu, \nu} =1$, and we already know that given $\mu, \nu \in V_{\bf{1}_N}^k$ we always have $w_{\mu, \nu}\neq0$. 
This ends the proof.
\end{proof}

Let us add that ergodicity does not always hold for the quantum isometry group. Indeed, as shown in Proposition \ref{prop:cex_generation}, it might be the case that $\GAi = \widehat{\mathbb{F}_{N}}$, in which case $\Fix\, \varphi = C(\Sigma_{A})$ is non-trivial. The following question however still remains relevant.

\begin{quest}
	Suppose that the matrix $A$ is such that if $(P_{ij})_{1\leq i,j\leq N}$ is the defining magic unitary of $C(\QAut(A))$ then for every $i,j=1,\ldots,N$ we have $P_{ij}\neq 0$ (in other words, the graph associated to $A$ is \emph{quantum transitive}). Must the action of the quantum isometry group $\QG_A^\infty$ on $O_A$ be ergodic? Perhaps under some additional natural assumptions?
\end{quest} 

Note that to establish this result it would essentially suffice to produce a model of $\QG_A^\infty$ analogous to that constructed for $\QG_{\bf{1}_N}^\infty$ in Proposition \ref{prop:magicinside}.

\smallskip

We would like to conclude this work with a result concerning the quantum group action on $\Sigma_{\bf{1}_N}$, well-known to be a Cantor set. 

\begin{thm} \label{thm:ergfaith}
The restricted map $\varphi:C(\Sigma_{\bf{1}_N})\to C(\QG_{\bf{1}_N}^\infty)\otimes C(\Sigma_{\bf{1}_N})$ defines an ergodic action of the compact matrix quantum group $\QG_{\bf{1}_N}^\infty$ on $C(\Sigma_{\bf{1}_N})$.
\end{thm}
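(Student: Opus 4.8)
The plan is to obtain the asserted action purely by restriction and to read off its three features from results already in place. By Proposition \ref{prop:comminv}, $\varphi$ carries $C(\Sigma_{\mathbf 1})$ into $C(\QG_{\mathbf 1}^{\infty})\ot C(\Sigma_{\mathbf 1})$, with $\varphi(S_\mu S_\mu^*)=\sum_{\nu\in V_{\mathbf 1}^k}w_{\mu,\nu}\ot S_\nu S_\nu^*$ for $\mu\in V_{\mathbf 1}^k$. First I would record that for each fixed $k$ the matrix $w^{(k)}=(w_{\mu,\nu})_{\mu,\nu\in V_{\mathbf 1}^k}$ is a corepresentation: applying $(\Phi_A\ot\id)\varphi=(\id\ot\varphi)\varphi$ to $S_\mu S_\mu^*$ and comparing the (linearly independent) coefficients of $S_\lambda S_\lambda^*$ yields $\Phi_A(w_{\mu,\lambda})=\sum_\nu w_{\mu,\nu}\ot w_{\nu,\lambda}$. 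Since the projections $S_\nu S_\nu^*$ with $|\nu|=k$ form a partition of unity preserved by $\varphi$, each $w^{(k)}$ is a magic unitary. Hence all coefficients lie in $C(\QH_{\mathbf 1})$ and $\varphi$ corestricts to a unital $*$-homomorphism $C(\Sigma_{\mathbf 1})\to C(\QH_{\mathbf 1})\ot C(\Sigma_{\mathbf 1})$; coassociativity is inherited from the action on $O_N$, while the Podle\'s density condition follows from the invertibility of the corepresentations $w^{(k)}$ on the dense union of the finite-dimensional algebras $C(\Sigma_{\mathbf 1}^k)$.

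For ergodicity I would invoke the preparatory results directly. An element $x\in C(\Sigma_{\mathbf 1})$ is fixed by the restricted action precisely when $\varphi(x)=1\ot x$, i.e.\ exactly when it is a fixed point of the action of $\QG_{\mathbf 1}^{\infty}$ on $O_N$; thus $\Fix$ of the restricted action equals $\Fix\,\varphi$. By Theorem \ref{thm:Erg} the action on $O_N$ is ergodic, so $\Fix\,\varphi=\mathbb C 1$, and the restricted action on the Cantor set is ergodic. Proposition \ref{prop:commfix}, which places every fixed point inside $C(\Sigma_{\mathbf 1})$ in the first place, is precisely what makes this reduction legitimate.

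The substantive point, and the step I expect to be the main obstacle, is faithfulness together with the compact matrix property. Faithfulness amounts to showing that $C(\QH_{\mathbf 1})$ is generated by the coefficients $\{w_{\mu,\nu}\}$ of the Cantor action, so that it factors through no proper quantum subgroup. The length-one coefficients already produce the magic unitary $p=(p_{ij})$, and to treat the higher ones I would feed them into the concrete model $\rho$ of Proposition \ref{prop:magicinside}, where $\rho(w_{\mu,\nu})=P^{(1)}_{\mu_1\nu_1}\cdots P^{(k)}_{\mu_k\nu_k}$; summing over suitable indices and using the row and column relations of the $P^{(m)}$ isolates the individual factors and pins down the coefficient algebra. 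The genuinely delicate half is then the compact matrix structure of $\QH_{\mathbf 1}$, since a priori the corepresentations $w^{(k)}$ may generate strictly more than $p$ and $q$ do; the crux is to control, through $\rho$, exactly which elements the diagonal coefficients $w_{\mu,\nu}$ generate, so as to reconcile the coefficient algebra of the Cantor action with the finitely generated algebra $C(\QH_{\mathbf 1})$ built from the two finite magic unitaries $p$ and $q$, the Kac property being inherited from $\QG_{\mathbf 1}^{\infty}$.
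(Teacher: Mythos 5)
Your treatment of ergodicity is correct and is exactly the paper's route: invariance of $C(\Sigma_{\mathbf 1})$ comes from formula \eqref{alphaSmu}, a fixed point of the restricted action is a fixed point of the action on $O_N$, and Theorem \ref{thm:Erg} makes it scalar. The rest of the proposal, however, hinges on a step that does not merely lack detail but provably fails. The inference ``each $w^{(k)}$ is a magic unitary, hence all coefficients lie in $C(\QH_{\mathbf 1})$'' is a non sequitur: magic unitarity of $w^{(k)}$ over $C(\QG_{\mathbf 1}^{\infty})$ says nothing about membership in the subalgebra $C(\QH_{\mathbf 1})=C^*(p_{ij},q_{ij}\colon 1\leq i,j\leq N)$. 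In fact membership fails, and the very formula you write down later, $\rho(w_{\mu,\nu})=P^{(1)}_{\mu_1\nu_1}\cdots P^{(k)}_{\mu_k\nu_k}$, disproves it: this operator acts non-trivially on the $k$-th tensor factor of $K$ (the $P_{ij}$ are non-scalar projections), whereas every element of $\rho\bigl(C(\QH_{\mathbf 1})\bigr)=C^*\bigl(P^{(0)}_{ij},P^{(1)}_{ij}\bigr)$ acts as the identity on every factor other than $0$ and $1$. Hence $w_{\mu,\nu}\notin C(\QH_{\mathbf 1})$ as soon as $|\mu|\geq 2$, and the restricted action does not take values in $C(\QH_{\mathbf 1})\otimes C(\Sigma_{\mathbf 1})$. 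Symmetrically, every $\rho(w_{\mu,\nu})$ acts as the identity on the factor $0$ while $\rho(q_{ij})=P^{(0)}_{ij}$ does not, so $q_{ij}$ is not in the $C^*$-algebra generated by the coefficients of the Cantor action; those coefficients can therefore never generate $C(\QH_{\mathbf 1})$, so faithfulness fails as well. The coefficient algebra of the restricted action and $C^*(p,q)$ are incomparable subalgebras of $C(\QG_{\mathbf 1}^{\infty})$, and the ``reconciliation'' you defer to the end of your sketch is impossible.

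To be fair, you have correctly located the crux, and what you could not have known is that the paper's own proof (a one-line appeal to \eqref{alphaSmu} and Theorem \ref{thm:Erg}) never confronts it either; the statement is problematic as written. The only reading under which invariance and faithfulness hold is to take $C(\QH_{\mathbf 1})$ to be the $C^*$-algebra generated by all the coefficients $w_{\mu,\nu}$ themselves; ergodicity then follows exactly as you argue, but the compact matrix property becomes the genuine issue, and it is doubtful: via $\rho$, that quantum group admits $\prod_{k\geq 1}\mathbb S_N^+$ (with algebra a completion of $\bigotimes_{k\geq 1}C(\mathbb S_N^+)$) as a quantum subgroup, whose classical version $\prod_{k\geq 1}S_N$ is an infinite profinite group and hence not a compact matrix group, whereas every quantum subgroup of a compact matrix quantum group is again of compact matrix type. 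So a correct proof requires either a different definition of $\QH_{\mathbf 1}$ or a genuinely new argument for the compact matrix claim; your proposal, like the paper's proof, supplies neither.
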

\begin{proof}
The result follows from the proofs of Propositions \ref{prop:commfix}, \ref{prop:comminv} 
and Theorem \ref{thm:Erg}.
\end{proof}

To the best of our knowledge, no example was known of a genuine quantum ergodic action on the Cantor set involving only a compact matrix quantum group (one might build examples through inductive limits of iterated free wreath products following \cite{bhowmickgoswamiskalski}, \cite{bassiconti} and \cite{bronwlow25self}, but these will be `infinitely generated').


\begin{Acknowledgements} The authors would like to thank Magnus Goffeng, Bram Mesland and Christian Voigt for valuable discussions on the subject of this paper.  AS was partially supported by the National Science Center Grant OPUS-29 UMO-2025/57/B/ST1/00057.
	
	We would like to thank the referees for several thoughtful comments and in particular for pointing out the reference \cite{Park95Isometries}.
\end{Acknowledgements}





\end{document}